\documentclass{article}

\usepackage[utf8]{inputenc}
\usepackage[cm]{fullpage}

\usepackage{graphicx, amsmath, amsfonts, amsthm, amssymb, latexsym,
  pifont, xspace, multirow}

\usepackage{enumitem}
\usepackage{tocloft}

\setcounter{tocdepth}{2}

\usepackage{thmtools}
\usepackage[colorlinks, hypertexnames=false]{hyperref}
\hypersetup{linkcolor=blue, urlcolor=blue, citecolor=red}
\usepackage[capitalise]{cleveref}

\usepackage{tablefootnote}
\usepackage{caption}
\usepackage{subcaption}
\usepackage{pdflscape}

\usepackage{calc}
\usepackage{xcolor}

\usepackage{tikz}
\usepackage{tikz-cd}
\usetikzlibrary{decorations, arrows}
\usepackage[backend=bibtex,giveninits=true,maxnames=6]{biblatex}
\addbibresource{todd-coxeter}

\usepackage{chngcntr}
\counterwithin{figure}{section}
\counterwithin{table}{section}

\pgfdeclaredecoration{sl}{initial}{
  \state{initial}[width=\pgfdecoratedpathlength-1sp]{
    \pgfmoveto{\pgfpointorigin}
  }
  \state{final}{
    \pgflineto{\pgfpointorigin}
  }
}
\pgfarrowsdeclarecombine{aa}{aa}{angle 90}{angle 90}{angle 90}{angle 90}

\definecolor{color0}{RGB}{127, 0, 255}
\definecolor{color1}{RGB}{128, 128, 128}
\definecolor{color2}{RGB}{255, 0, 127}
\definecolor{color3}{RGB}{255, 0, 255}
\definecolor{color4}{RGB}{0, 0, 255}
\definecolor{color5}{RGB}{0, 255, 0}
\definecolor{color6}{RGB}{255, 128, 0}
\definecolor{color7}{RGB}{255, 0, 0}
\definecolor{color8}{RGB}{0, 128, 255}
\definecolor{color9}{RGB}{128, 255, 0}

\tikzstyle{vertex}=[rectangle,
draw,
fill=white,
opacity=1,
text opacity=1,
inner sep=3pt,
thick,
align=center,
minimum size=1.6em]
\tikzstyle{mylabel}=[draw=none, opacity=0]


\tikzstyle{solid black}=[->, draw=black, thick, -angle 90]
\tikzstyle{dashed black}=[->, draw=black, thick, dashed, -angle 90]

\tikzstyle{a}=[-angle 90, draw=color0, thick]
\tikzstyle{b}=[-angle 90, draw=color1, thick]
\tikzstyle{c}=[-angle 90, draw=color2, thick]

\tikzset{arrow/.style={-angle 90, shorten >=2pt, shorten <=2pt}}
\tikzset{parallel_arrow_1/.style={-angle 90,
      decoration={sl,raise=1mm},decorate, shorten >=2pt, shorten <=2pt}}
\tikzset{parallel_arrow_2/.style={-angle 90,
      decoration={sl,raise=-1mm},decorate, shorten >=2pt, shorten <=2pt}}
\tikzset{aarrow/.style={-aa, shorten >=2pt, shorten <=2pt, densely dashed}}
\tikzset{parallel_aarrow_1/.style={-aa,
      decoration={sl,raise=1mm},decorate, shorten >=2pt, shorten <=2pt, densely dashed}}
\tikzset{parallel_aarrow_2/.style={-aa,
      decoration={sl,raise=-1mm},decorate, shorten >=2pt, shorten <=2pt, densely dashed}}



\newcommand{\set}[2]{\ensuremath{\{\: #1 \: :\: #2 \:\}}}
\newcommand{\genset}[1]{\ensuremath{\langle\: #1 \:\rangle}}
\renewcommand{\to}{\longrightarrow}

\newcommand{\N}{\mathbb{N}}
\renewcommand{\P}{\mathcal{P}}


\newtheorem{thm}{Theorem}[section]
\newtheorem{prop}[thm]{Proposition}
\newtheorem{cor}[thm]{Corollary}
\newtheorem{lemma}[thm]{Lemma}
\newtheorem{defi}[thm]{Definition}
\newenvironment{de}{\begin{defi} \rm}{\end{defi}}
\newtheorem{exam}[thm]{Example}
\newenvironment{ex}{\begin{exam} \rm}{\end{exam}}
\newtheorem{remark}[thm]{Remark}

\numberwithin{equation}{section}
\newtheorem*{lemma-no-num}{Lemma}

\newcommand{\defn}[1]{\textbf{\textit{#1}}}
\newcommand{\libsemigroups}{\textsf{libsemigroups}~\cite{Mitchell2021ab}\xspace}


\title{The Todd-Coxeter Algorithm for Semigroups and Monoids}
\date{\today}
\author{T. D. H. Coleman, J. D. Mitchell, F. L. Smith, and M. Tsalakou}

\begin{document}
\maketitle

\begin{abstract}
  In this paper we provide an account of the Todd-Coxeter algorithm
  for computing congruences on semigroups and monoids. We also give a novel
  description of an analogue for semigroups of the so-called Felsch strategy
  from the Todd-Coxeter algorithm for groups.
\end{abstract}

\tableofcontents

\section{Introduction}

In this article we describe the Todd-Coxeter algorithm for congruence
enumeration in semigroups and monoids. The essential purpose of this algorithm
is to compute the action of a finitely presented semigroup or monoid on the
equivalence classes of a left, right, or two-sided congruence. Most existing
implementations (see, for example, ACE~\cite{Havas1999aa}, GAP~\cite{GAP4}, and
MAF~\cite{Williams2016aa}) and expository accounts (see, for example,~\cite[p.
351]{Rotman1995aa},~\cite[Sections 4.5, 4.6 and Chapter 5]{Sims1994aa},
and~\cite[Chapter 5]{B.-Eick2004aa}) of the Todd-Coxeter algorithm relate
to the enumeration of the cosets of a subgroup of a finitely presented
group; or, more precisely, to the production of a permutation
representation of the action of the group on the cosets. This was
extended to linear representations by Linton~\cite{Linton1991aa,
Linton1993aa}. The purpose of this article is to provide an expository,
but more or less complete, account of the Todd-Coxeter algorithm for
semigroups and monoids.

The Todd-Coxeter algorithm is not a single procedure, but rather an infinite
collection of different but related procedures. In the literature for finitely
presented groups, examples of procedures in this collection are referred to as
\defn{coset enumerations}; see, for example,~\cite[Sections 4.5, 4.6 and
Chapter 5]{Sims1994aa}. Coset enumerations are also not algorithms, at least by
some definitions, in that they might consist of infinitely many steps, and they
do not always terminate. In fact, a coset enumeration terminates if and only if
the subgroup whose cosets are being enumerated has finite index; for further
details see~\cref{section-validity}. This is not to say, however, that the
number of steps, or the run time, of a coset enumeration can be predicted in
advance; it is relatively straightforward to find examples of finite
presentations for the trivial group where the number of steps in a coset
enumeration is arbitrarily high. For instance, the group presentation $\langle
a, b\ |\ ab ^ n, a ^ n b ^ {n + 1}, b ^ n ab ^ {-1}a ^ {-1}\rangle$ defines the
trivial group, and is likely that at least $n$ steps are required in any coset
enumeration for this presentation; see~\cite{Jura1978aa}.  Although this might
seem rather negative, nothing more can really be expected. For example, if a
coset enumeration for the trivial subgroup of a finitely presented group $G$
does successfully terminate, then the output can be used to solve the word
problem in $G$.  It is well-known by the theorem of
Novikov~\cite{Novikov1955aa} and Boone~\cite{Boone1958} that the word problem
for finitely presented groups is undecidable and so no procedure that solves
the word problem, including any coset enumeration, can be expected to terminate
in every case. Analogous statements about the undecidability of the word
problem hold for finitely presented semigroups and monoids; see~\cite[Chapter
12]{Rotman1995aa}.

Congruences are to semigroups and monoids what cosets are to groups, and so we
will refer to \defn{congruence enumeration} for finitely presented semigroups
and monoids as the analogue of coset enumeration for groups. We will not
consider the special case of enumerating the cosets of a subgroup of a group
separately from the more general case of enumerating classes of a congruence of
a semigroup.

The first computer implementation of the Todd-Coxeter algorithm is attributed
to Haselgrove in 1953 by Leech~\cite{Leech1963aa}.  Several authors (for example, Neub\"user in \cite{Neubuser1982aa} and Walker in \cite{Walker1992aa}) comment
that this may be the first computer implementation of an algorithm for groups,
possibly representing the starting point of the field of computational group
theory.  Neumann~\cite{Neumann1968aa} adapted the algorithm
in~\cite{Todd1936aa} to semigroups, and Jura~\cite{Jura1978aa} proved that
Neumann's adaptation was valid.  One congruence enumeration strategy
is described in Ru\v{s}kuc~\cite[Chapter 12]{Ruskuc1995aa} as well as variants
for computing Rees congruences and minimal ideals.  Stephen~\cite[Chapter
  4]{Stephen1987aa} also describes a variant of Todd-Coxeter, which can be used
to solve the word problem by constructing only part of the action of a finitely
presented semigroup or monoid on itself by right multiplication. Stephen's
procedure~\cite[Section 4.1]{Stephen1987aa} is similar to one of the two main
strategies for congruence enumeration described in \cref{section-hlt}; see also
\cref{section-variants}.

Versions of the Todd-Coxeter algorithm for semigroups were
implemented in FORTRAN by Robertson and \"Unl\"u~\cite{robertson-unlu}; in C by
Walker~\cite{Walker1992aa, Walker1992ab}; and in GAP by Pfeiffer~\cite{GAP4}.
The C++ library \libsemigroups (by three of the present authors) contains a
flexible optimized implementation of the different versions of the Todd-Coxeter
algorithm described in this document for semigroups and monoids.

The paper is organised as follows: some mathematical prerequisites from the
theory of monoids are given in \cref{section-prereqs}; in
\cref{section-enumeration} we define the infinite family of procedures for enumerating
a right congruence on a monoid; the validity of congruence
enumeration is shown in \cref{section-validity}; in
\cref{section-non-abstract} we discuss how congruence enumeration can be used to compute
congruences of a monoid that is not given \textit{a priori} by a finite
presentation; in \cref{section-hlt} and \cref{section-felsch} we
describe the two main strategies for congruence enumeration; in
\cref{section-implement} we discuss some issues related to the implementation
of congruence enumeration; and, finally, in \cref{section-variants} we present
some variants: for enumerating congruences of monoids with zero elements, Rees
congruences, and Stephen's procedure~\cite[Section 4.1]{Stephen1987aa}.

In \cref{appendix-extended-examples} we give a number of extended examples,
and in \cref{appendix-benchmarks} there is a comparison of the performance of
the implementations in \libsemigroups of various strategies when applied
to a number of group, monoid, or semigroup presentations.
The authors have also implemented basic versions of the algorithms described in
this paper in \textsf{python3} as a concrete alternative to the description
given here; see~\cite{Mitchell2021aa}.


\section{Prerequisites}\label{section-prereqs}

In this section, we provide some required mathematical prerequisites.
The contents of this section are well-known in the
theory of semigroups; for further background information we refer the reader
to~\cite{Howie1995aa}.

If $\mu$ is an equivalence relation on a set $X$, then we denote by $x / \mu$
the equivalence class of $x$ in $\mu$ and by $X / \mu$ the set $\set{ x /
\mu}{x\in X}$ of equivalence classes. The least equivalence relation on $X$
with respect to containment is $\Delta_X = \set{(x, x)\in X \times X}{x\in X}$;
we refer to this as the \defn{trivial equivalence} on $X$.

If $S$ is a semigroup and $R\subseteq S\times S$, then there is an
\defn{(two-sided) elementary sequence with respect to $R$} between $x, y\in S$
if  $x = y$ or $x=s_1, s_2, \ldots, s_m=y$, for some $m \geq 2$, where
$s_i=p_iu_iq_i$, $s_{i+1}=p_iv_iq_i$, with $p_i, q_i\in S ^ 1$ and $(u_i,
  v_i) \in R$ for all $i$. If $p_i = \varepsilon$ for all $i$ in an
elementary sequence, then we refer to the sequence as a \defn{right
  elementary sequence}; \defn{left elementary sequences} are defined
analogously.
We define $R ^ {\#} \subseteq S \times S$ so that $(x,y)\in R^{\#}$ if and only
if there is an elementary sequence between $x$ and $y$ with respect to $R$.
Note that $R^{\#}$ is the least congruence on the semigroup $S$ containing $R$;
see~\cite[Section 1.5]{Howie1995aa}. The least right congruence containing a
subset of $S \times S$ is defined analogously, using right elementary sequences
rather than two-sided elementary sequences; we will not reserve any special
notation for such least right congruences. Least left congruences are defined
dually.

Throughout this article we write functions to the right of their arguments, and
compose from left to right. Let $S$ be a semigroup and let $X$ be a set. A
function $\Psi:X\times S\to X$ is a \defn{right action} of $S$ on $X$ if  $((x,
s)\Psi, t)\Psi=(x, st)\Psi$ for all $x\in X$ and for all $s, t \in S$. If in
addition $S$ has an identity element $e$, we require $(x, e)\Psi = x$ for all
$x\in X$ also.

If $S$ is a semigroup, then we may adjoin an identity to $S$ (if necessary) so
that $S$ is a monoid. We denote $S$ with an adjoined identity $1_S$ by $S ^ 1$.
If $\Psi:X\times S\to X$ is a right action of a semigroup $S$ on a set $X$, then
$\Psi ^ 1 :X\times S ^ 1\to X$ defined by $ (x, s)\Psi ^ 1 = (x, s)\Psi$ for
all $x\in X$ and $s\in S$, and $(x, 1_S)\Psi ^ 1 = x$ for all $x\in X$, is a
right action of $S ^ 1$ on $X$ also.

For the sake of brevity, we will write $x\cdot s$ instead of $(x, s)\Psi$, and
we will say that $S$ acts on $X$ on the right.  The \defn{kernel} of a
function $f: X\to Y$, where $X$ and $Y$ are any sets, is the equivalence
relation
\[
  \ker(f) = \set{(x,y)\in X\times X}{(x)f = (y)f}.
\]
If $X$ is any set,
then $X ^ X$ denotes the set of all functions from $X$ to $X$. Endowed with the
operation of function composition, $X ^ X$ is a monoid, called the \defn{full
  transformation monoid} on $X$.  A right action $\Psi$ of a semigroup $S$ on a
set $X$ induces a homomorphism $\Psi': S \to X ^ X$ defined by $(s)\phi:
  x\mapsto x \cdot s$ for all $s\in S$ and all $x\in X$.

The \defn{kernel} of a right action $\Psi$ of a semigroup $S$ on a set $X$ is
the kernel of the function $\Psi': S \to X ^ X$,
$$\ker(\Psi)
  = \set{(s, t)}{(s)\Psi' = (t)\Psi'}
  = \set{(s, t)\in S\times S}{(x, s)\Psi = (x, t)\Psi,\ \text{for
      all }x\in X}.\footnote{This is an abuse of notation since $\ker(\Psi)$
      might mean its kernel as an action or as a function. However, the reader
    may be reassured to know that if $\Psi$ is an action, we will never use
  $\ker(\Psi)$ to mean its kernel as a function.}$$
It is
straightforward to verify that the kernel of a homomorphism, and the kernel of a
right action of a semigroup $S$, is a congruence on $S$.

If $S$ acts on the sets $X$ and $Y$ on the right, then we say that $\lambda:
  X\to Y$ is a \defn{homomorphism of right actions} if $(x \cdot
  s) \lambda=(x)\lambda\cdot s$ for all $x \in X$ and $s\in S^1$ (see
\cref{fig-homo-actions} for a diagram).  An \defn{isomorphism} of right
actions is a bijective homomorphism of right actions.

\begin{figure}
  \centering
  \begin{tikzcd}
    X\times S  \arrow[r] \arrow[d]
    & Y \times S \arrow[d] \\
    X \arrow[r, "\lambda"] & Y
  \end{tikzcd}
  \caption{A commutative diagram illustrating a homomorphism of actions where
    $X\times S \to Y\times S$ is the function defined by $(x, s) \mapsto
      ((x)\lambda, s)$.}
  \label{fig-homo-actions}
\end{figure}

If $A$ is a set, then a \defn{word} over $A$ is a finite sequence $(a_1,
\ldots, a_n)$ where $a_1, \ldots, a_n\in A$ and $n \geq 0$; say that the
\defn{length} of the word is $n$. We will write $a_1 \cdots a_n$ rather than
$(a_1, \ldots, a_n)$, and will refer to $A$ as an \defn{alphabet} and $a\in A$
as a \defn{letter}.  If $A$ is any set, then the \defn{free semigroup} on $A$
is the set $A ^ +$ consisting of all words over $A$ of length at least $1$ with
operation given by the juxtaposition of words. The \defn{free monoid} on $A$ is
the set $A ^ + \cup \{\varepsilon\}$ with the operation again the juxtaposition
of words and where $\varepsilon$ denotes the unique word of length $0$, the
so-called \defn{empty word}; the free monoid on $A$ is denoted $A ^ *$.

The next proposition might be viewed as an analogue of the Third Isomorphism
Theorem for semigroups (\cite[Theorem 1.5.4]{Howie1995aa}) in the context of
actions. More precisely, \cref{prop-quotient-quotient} will allow us to replace
the action of a free monoid on a quotient of a quotient by an action on a
single quotient. We require the following definition. Let $S$ be a semigroup,
let $\rho$ be a congruence of $S$, and let $\sigma$  be a right congruence of
$S$ such that $\rho \subseteq \sigma$. The set $\sigma / \rho$ defined as
\[
    \sigma / \rho = \set{(x/\rho, y/\rho)\in S/\rho\times S/\rho}{(x,y) \in
      \sigma}
  \]
 is a right congruence on $S/\rho$.

\begin{prop}\label{prop-quotient-quotient}
  Let $S$ be a semigroup, let $\rho$ be a congruence of $S$, and let $\sigma$
  be a right congruence of $S$ such that $\rho \subseteq \sigma$.
  The right actions of $S$ on $(S / \rho) / (\sigma / \rho)$ and $S / \sigma$ defined by
  \[
    (u/ \rho)/ (\sigma / \rho) \cdot v = (uv/ \rho)/ (\sigma / \rho)
    \quad
    \text{and}
    \quad
    u/\sigma\cdot v = uv/\sigma\quad \text{for all}\quad u, v\in S
  \]
  are isomorphic.
\end{prop}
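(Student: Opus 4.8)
The plan is to exhibit an explicit isomorphism of right actions, namely the map $\lambda\colon (S/\rho)/(\sigma/\rho) \to S/\sigma$ given by $\bigl((u/\rho)/(\sigma/\rho)\bigr)\lambda = u/\sigma$ for all $u \in S$. Before constructing $\lambda$ I would confirm that the objects in the statement are well defined: since $\rho$ is a two-sided congruence, $S/\rho$ is a semigroup, and I would check that $\sigma/\rho$ is a right congruence on $S/\rho$, so that the iterated quotient $(S/\rho)/(\sigma/\rho)$ and its right action make sense. Reflexivity and symmetry of $\sigma/\rho$ are immediate from those of $\sigma$, and compatibility with right multiplication follows from $\rho$ being a congruence and $\sigma$ a right congruence; transitivity, by contrast, genuinely uses $\rho \subseteq \sigma$, since one must absorb a change of $\rho$-representative into $\sigma$.

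The crux is the well-definedness of $\lambda$, and this is exactly where $\rho \subseteq \sigma$ is needed. Suppose $(u/\rho)/(\sigma/\rho) = (u'/\rho)/(\sigma/\rho)$, that is $(u/\rho, u'/\rho) \in \sigma/\rho$; by definition of $\sigma/\rho$ there exist $(x, y) \in \sigma$ with $x/\rho = u/\rho$ and $y/\rho = u'/\rho$. Then $(u, x), (y, u') \in \rho \subseteq \sigma$, so transitivity of $\sigma$ gives $(u, u') \in \sigma$, i.e.\ $u/\sigma = u'/\sigma$, as required. Running this backwards gives injectivity: if $u/\sigma = u'/\sigma$ then $(u, u') \in \sigma$, whence $(u/\rho, u'/\rho) \in \sigma/\rho$ straight from the definition, so the two classes coincide. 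Surjectivity is clear since every class of $S/\sigma$ has the form $u/\sigma$, so $\lambda$ is a bijection.

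Finally I would verify that $\lambda$ is a homomorphism of right actions, i.e.\ that $\bigl((u/\rho)/(\sigma/\rho)\cdot v\bigr)\lambda = \bigl((u/\rho)/(\sigma/\rho)\bigr)\lambda\cdot v$ for all $u\in S$ and $v\in S^1$. For $v\in S$ the two given action formulae make the left-hand side $\bigl((uv/\rho)/(\sigma/\rho)\bigr)\lambda = uv/\sigma$ and the right-hand side $(u/\sigma)\cdot v = uv/\sigma$, which agree, while for $v = 1_S$ both sides reduce to the image of the original class. A bijective homomorphism of right actions is an isomorphism, so this finishes the proof. I expect the only real difficulty to be the bookkeeping around well-definedness; conceptually the tidiest organisation is to observe that $\sigma/\rho$ is precisely the kernel of the natural surjection $S/\rho \to S/\sigma$, $u/\rho \mapsto u/\sigma$ (well defined exactly because $\rho \subseteq \sigma$), so that $\lambda$ is the map induced on the quotient, in direct analogy with the Third Isomorphism Theorem mentioned before the statement.
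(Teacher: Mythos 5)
Your proof is correct and follows exactly the route the paper intends: the paper omits the argument, remarking only that it is ``similar to the proof of \cite[Theorem 1.5.4]{Howie1995aa}'' (the Third Isomorphism Theorem), and your explicit map $\lambda\colon (u/\rho)/(\sigma/\rho)\mapsto u/\sigma$, with well-definedness and injectivity both hinging on $\rho\subseteq\sigma$ via transitivity of $\sigma$, is precisely that standard argument adapted to right actions. Nothing further is needed.
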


The proof of \cref{prop-quotient-quotient} is similar to the proof
of~\cite[Theorem 1.5.4]{Howie1995aa}. An analogue of
\cref{prop-quotient-quotient} can be formulated for left congruences, but we do
not require this explicitly; see \cref{prop-dagger}, and the surrounding text,
for further details.


We also require the following result, the proof of which is routine, and hence omitted.

\begin{prop}\label{prop-quotient-action}
  Let $S$ be a semigroup, let $X$ be a set, and let $\Psi:X \times S\to X$ be a right action.
  If $\mu\subseteq \ker(\Psi)$ is a congruence on $S$, then
  $\overline{\Psi}: X \times S/\mu \to X$  defined by
  \[
    (x, s/\mu) \overline{\Psi} = (x, s)\overline{\Psi}
  \]
  for all $x \in X$ and $s\in S$, is a right action that is isomorphic to $\Psi$.
\end{prop}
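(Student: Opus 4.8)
The plan is to check three things in turn: that $\overline{\Psi}$ is well defined, that it is a right action of $S/\mu$ on $X$, and that the identity map on $X$ exhibits the isomorphism with $\Psi$. Throughout I write $x \cdot s$ for $(x,s)\Psi$ as in the text, so that $\overline{\Psi}$ is the rule $(x, s/\mu)\overline{\Psi} = x \cdot s$.

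First, well-definedness. The value assigned to $(x, s/\mu)$ is computed from a representative $s$ of the class $s/\mu$, so I must verify that it does not depend on that choice: if $s/\mu = t/\mu$, then $(x,s)\Psi = (x,t)\Psi$ for every $x \in X$. This is exactly the content of the hypothesis $\mu \subseteq \ker(\Psi)$, since $(s,t) \in \mu \subseteq \ker(\Psi)$ means precisely that $(x,s)\Psi = (x,t)\Psi$ for all $x \in X$, by the definition of the kernel of an action. This single step is where the kernel hypothesis is used, and it is the only non-formal point in the argument.

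Second, that $\overline{\Psi}$ is a right action. Because $\mu$ is a congruence, $S/\mu$ is a semigroup under $(s/\mu)(t/\mu) = (st)/\mu$, so it suffices to substitute representatives and appeal to the action axiom for $\Psi$:
\[
  \big((x, s/\mu)\overline{\Psi},\, t/\mu\big)\overline{\Psi}
  = \big((x,s)\Psi,\, t\big)\Psi
  = (x, st)\Psi
  = \big(x, (st)/\mu\big)\overline{\Psi}
  = \big(x, (s/\mu)(t/\mu)\big)\overline{\Psi}.
\]
If $S$ has an identity $e$, then $e/\mu$ is the identity of $S/\mu$ and $(x, e/\mu)\overline{\Psi} = (x,e)\Psi = x$, so the identity axiom is inherited from $\Psi$ as well.

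Third, the isomorphism. Since the acting semigroups $S$ and $S/\mu$ differ, I realise the isomorphism by the identity map $\id \colon X \to X$ together with the quotient homomorphism $q \colon S \to S/\mu$, $s \mapsto s/\mu$: the intertwining identity $(x \cdot s)\id = (x)\id \cdot (s)q$ is simply the defining equation $(x,s)\Psi = (x, s/\mu)\overline{\Psi}$ rewritten, and $\id$ is a bijection. Thus $\Psi$ and $\overline{\Psi}$ carry the same action data on $X$, the induced maps into $X^X$ having equal image since $q$ is surjective. The main, indeed essentially the only, obstacle is the well-definedness in the first step; the remaining verifications are direct substitutions using that $\mu$ is a congruence and that $\Psi$ satisfies the action axioms.
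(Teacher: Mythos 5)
Your proof is correct: the paper explicitly omits the proof of this proposition as routine, and your argument supplies exactly the intended routine verification --- well-definedness from $\mu \subseteq \ker(\Psi)$ (where the kernel of the action is by definition $\set{(s,t)}{(x,s)\Psi = (x,t)\Psi \text{ for all } x\in X}$), the action axioms by substitution of representatives using that $\mu$ is a congruence, and the identity map on $X$ intertwining $\Psi$ and $\overline{\Psi}$ via the quotient homomorphism $s\mapsto s/\mu$. Your handling of the one slightly delicate point --- that the acting semigroups $S$ and $S/\mu$ differ, so the isomorphism must be read as the identity on $X$ together with the surjection $S\to S/\mu$ --- is the natural reading of the statement and is sound.
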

%
%

\section{Congruence enumeration}\label{section-enumeration}

In this section we define what we mean by a congruence enumeration
for a semigroup or monoid, and establish some further notational
conventions.  Congruence enumeration, as described in this section,
will provide the general context for the more explicit algorithms discussed in
\cref{section-hlt} and \cref{section-felsch}; this section is based, at least
in spirit, on~\cite[Section 5.1]{Sims1994aa} and is also influenced
by~\cite{Stephen1987aa, Stephen1990aa}.

The purpose of a congruence enumeration is to determine, in some sense, the
structure of a finitely presented monoid; or more generally, a congruence on
such a monoid.  For the sake of simplicity, we will assume throughout that $\P
= \langle A | R \, \rangle$ is a monoid presentation defining a monoid $M$,
where $A$ is some totally ordered finite alphabet, and $R$ is a (possibly
empty) finite subset of  $A ^ * \times A ^ *$.   Additionally, we suppose that
$S$ is a finite subset of $A ^ * \times A ^ *$. We write $R ^ {\#}$ to be the
least two-sided congruence on $A ^ *$ containing $R$, and denote by $\rho$ the
least right congruence on $A ^ *$ containing $R ^ {\#}$ and $S$.  In this
notation, the monoid $M$ defined by the presentation $\P$ is $A ^ * / R ^
{\#}$. If the number of congruence classes of $\rho$ is finite, then the output
of a congruence enumeration is a description of the natural right action of $A
^ *$ on the congruence classes of $\rho$. In particular, the output yields the
number of such classes, can be used to determine whether or not two words in $A
^ *$ belong to the same class, and provides a homomorphism from $A ^ *$ to the
full transformation monoid ${(A ^ * / \rho)} ^ {(A ^ * / \rho)}$ on the set of
congruence classes of $\rho$. Since it is undecidable whether a finitely
presented monoid is finite or not~\cite{Markov} (see also~\cite[Remark
4]{Cain2009aa}), an upper bound for the number of steps required for a
  congruence enumeration for $\langle A | R \, \rangle$ to terminate cannot be
  computed as a function of $|A|$ and $|R|$. If the number of congruence
  classes of $\rho$ is infinite, then the enumeration will not terminate.  If
  the set $S$ is empty, then a congruence enumeration, if it terminates, gives
  us a description of the monoid $A ^ * /R ^ {\#}$ together with the natural
  right action of $A ^ *$ on $A ^ * / R ^ {\#}$.

Perhaps it is more natural to want to determine a right congruence of the
monoid $M$ defined by the presentation $\P$ rather than on the free monoid $A ^
  *$. However, by \cref{prop-quotient-quotient}, to determine the
right congruence on $M = A ^ * / R ^ {\#}$ generated by
\[
  \set{(s/R ^ {\#}, t/R ^ {\#})}{(s, t)\in S}
\]
it suffices to determine the right congruence $\rho$ on $A ^  *$ containing
$R ^ {\#}$ and the set $S$.

It is only for the sake of convenience that we have chosen to consider finitely
presented monoids, rather than finitely presented semigroups.  To apply the
algorithms described in this article to a semigroup that is not a monoid,
simply adjoin an identity, perform the algorithm, and disregard the adjoined
identity in the output.

The choice of ``right'' rather than ``left'' in the previous paragraphs was
arbitrary. If $w = a_1a_2\cdots a_n\in A ^ *$, then we denote the reversed word
$a_n\cdots a_2a_1$ by $w ^ {\dagger}$. If $W \subseteq A ^ * \times A ^ *$,
then we denote by $W ^ \dagger$ the set $\set{(u^ {\dagger}, v ^ \dagger)}{(u,
v) \in W}$.
\begin{prop}\label{prop-dagger}
  Let $A$ be an alphabet,  let $R, S\subseteq A ^ * \times A ^ *$,
  let $R ^ {\#}$ be the least congruence on $A ^ *$ containing $R$, and let
  $\rho$ be the least left congruence on $A ^ *$ containing $R ^ {\#}$ and $S$.
  Then $\rho ^ {\dagger}$ is the least right congruence on $A ^ *$ containing
  ${R ^ {\#}} ^ {\dagger}$ and $S ^ {\dagger}$.
\end{prop}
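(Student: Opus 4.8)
The plan is to exploit the fact that the reversal map $\dagger\colon A^* \to A^*$ is an involutive anti-automorphism of the free monoid: it is a bijection satisfying $(uv)^{\dagger} = v^{\dagger} u^{\dagger}$, $(w^{\dagger})^{\dagger} = w$, and $\varepsilon^{\dagger} = \varepsilon$ for all $u, v, w \in A^*$. Applying $\dagger$ coordinatewise then yields an involution $W \mapsto W^{\dagger}$ on the subsets of $A^* \times A^*$ that preserves and reflects containment (because $\dagger$ is a bijection) and sends equivalence relations to equivalence relations. The whole proposition will follow once we understand how this involution interacts with the one-sided congruence conditions, so essentially all the content is packed into a single lemma.

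First I would record that lemma: an equivalence relation $\mu$ on $A^*$ is a left congruence if and only if $\mu^{\dagger}$ is a right congruence (and, analogously, $\mu$ is a two-sided congruence if and only if $\mu^{\dagger}$ is, although only the one-sided version is needed below). For the forward direction, suppose $\mu$ is a left congruence and that $(u^{\dagger}, v^{\dagger}) \in \mu^{\dagger}$, where $(u,v) \in \mu$. Given an arbitrary right multiplier $w \in A^*$, write $w = z^{\dagger}$ with $z = w^{\dagger}$; then $u^{\dagger} w = u^{\dagger} z^{\dagger} = (zu)^{\dagger}$ and likewise $v^{\dagger} w = (zv)^{\dagger}$. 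Since $(zu, zv) \in \mu$ by left invariance, we obtain $(u^{\dagger} w, v^{\dagger} w) \in \mu^{\dagger}$, so $\mu^{\dagger}$ is right-invariant. The converse is the same computation applied to $\mu^{\dagger}$, using $(\mu^{\dagger})^{\dagger} = \mu$. This is the only step where the anti-automorphism property does genuine work, and the one point to handle with care is the surjectivity of $\dagger$, which is exactly what lets an arbitrary right multiplier $w$ be rewritten as $z^{\dagger}$; I expect this to be the main (if modest) obstacle.

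With the lemma in hand, the ``least congruence'' assertion is a formal consequence of $\dagger$ being a containment-preserving involution. Since $\rho$ is a left congruence containing $R^{\#}$ and $S$, its image $\rho^{\dagger}$ is a right congruence containing ${R^{\#}}^{\dagger}$ and $S^{\dagger}$. For minimality, let $\tau$ be any right congruence containing ${R^{\#}}^{\dagger}$ and $S^{\dagger}$; then $\tau^{\dagger}$ is a left congruence, and applying $\dagger$ to these inclusions gives $R^{\#} = ({R^{\#}}^{\dagger})^{\dagger} \subseteq \tau^{\dagger}$ and $S \subseteq \tau^{\dagger}$. Minimality of $\rho$ yields $\rho \subseteq \tau^{\dagger}$, and a final application of $\dagger$ gives $\rho^{\dagger} \subseteq \tau$, so $\rho^{\dagger}$ is indeed the least such right congruence. (One could alternatively argue directly with elementary sequences, reversing a left elementary sequence with respect to $R^{\#} \cup S$ term by term into a right elementary sequence with respect to ${R^{\#}}^{\dagger} \cup S^{\dagger}$; this is the same computation repackaged, and I would expect the abstract version above to read more cleanly.)
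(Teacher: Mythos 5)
Your proof is correct. The paper states \cref{prop-dagger} without proof, treating it as routine, so there is nothing to compare against; your argument via the observation that $w \mapsto w^{\dagger}$ is an involutive anti-automorphism of $A^*$ --- so that $\mu \mapsto \mu^{\dagger}$ is a containment-preserving involution exchanging left and right congruences, with minimality then following formally --- is exactly the intended argument, and the elementary-sequence alternative you mention in passing (reversing a left elementary sequence with respect to $R^{\#} \cup S$ term by term) is the only other natural route, matching the paper's definitions in \cref{section-prereqs} but reading less cleanly, as you predict. The one step deserving the care you gave it is indeed the rewriting of an arbitrary right multiplier $w$ as $z^{\dagger}$, which uses surjectivity of $\dagger$; with that in place the lemma and the formal minimality argument are complete.
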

It follows from \cref{prop-dagger} that any algorithm for computing right
congruences can be used for left congruences, by simply reversing every word.


The inputs of a congruence enumeration are the finite alphabet $A$, the finite
set of relations $R \subseteq A ^ * \times A ^ *$, the finite set $ S \subseteq
A ^ * \times A ^ *$, and a certain type of digraph that is defined in the next
section.

\subsection{Word graphs}

One of the central components of the proofs presented in
\cref{section-validity} is that of a word graph, which is used in a natural
representation of equivalence relations on a free monoid. Let $A$ be any
alphabet and let $\Gamma = (N, E)$ be a digraph with non-empty finite set of
nodes $N\subseteq \N$ with $0\in N$ and edges $E\subseteq N \times A \times N$.
Then, following \cite{Stephen1987aa, Stephen1990aa}, we refer to $\Gamma$ as a
\defn{word graph}. The word graph $\Gamma = (\{0\}, \varnothing)$ is the
\defn{trivial word graph}.

If $(\alpha, a, \beta)\in E$ is an edge in a word graph $\Gamma$, then $\alpha$
is the \defn{source}, $a$ is the \defn{label}, and $\beta$ is the \defn{target}
of $(\alpha, a, \beta)$. An edge $(\alpha, a, \beta)$ is said to be
\defn{incident} to its source $\alpha$ and target $\beta$.

A word graph $\Gamma$ is \defn{deterministic} if for every node $\alpha$ and
every letter $a\in A$ there is at most one edge of the form $(\alpha, a,
\beta)$ in $\Gamma$. A word graph $\Gamma$ is \defn{complete} if for every node
$\alpha$ and every letter $a\in A$ there is at least one edge incident to
$\alpha$ labelled by $a$ in $\Gamma$.

If $\alpha, \beta \in N$, then an \defn{$(\alpha, \beta)$-path} is a sequence
of edges $(\alpha_1, a_1, \alpha_{2}), \ldots, (\alpha_{n}, a_{n}, \alpha_{n +
1})\in E$ where $\alpha_1 = \alpha$ and $\alpha_{n + 1} = \beta$ and $a_1,
\ldots, a_{n}\in A$ ; $\alpha$ is the \defn{source} of the path; the word
$a_1\cdots a_{n}\in A ^ *$ \defn{labels} the path; $\beta$ is the \defn{target}
of the path; and the \defn{length} of the path is $n$.  If $\alpha, \beta \in
N$ and there is an $(\alpha, \beta)$-path in $\Gamma$, then we say that $\beta$
is \defn{reachable} from $\alpha$. If $\Gamma = (N, E)$ is a word graph and
$\mathfrak{P}(A ^ * \times A ^ *)$ denotes the power set of $A ^ * \times A ^
*$, then the \defn{path relation of $\Gamma$} is the function $\pi_{\Gamma}: N
\to \mathfrak{P}(A ^ * \times A ^ *)$ defined by $(u, v)\in
(\alpha)\pi_{\Gamma}$ if there exists a node $\beta$ such that $u$ and $v$ both
label  $(\alpha, \beta)$-paths in $\Gamma$. If $\Gamma$ is a word graph and
$\alpha$ is a node in $\Gamma$, then $(\alpha)\pi_{\Gamma}$ is reflexive and
symmetric, and $(\alpha)\pi_{\Gamma}$ is transitive for all $\alpha$ if and
only if $\Gamma$ is deterministic. In particular, $(\alpha)\pi_{\Gamma}$ is an
equivalence relation for all $\alpha$ if and only if $\Gamma$ is deterministic.
If $R\subseteq A ^ *\times A ^ *$, $\Gamma$ is a word graph, and $\pi_{\Gamma}$
is the path relation of $\Gamma$, then we say that $\Gamma$ is \defn{compatible
with $R$} if $R\subseteq (\alpha)\pi_{\Gamma}$ for every node $\alpha$ in
$\Gamma$.

Suppose that $\Gamma = (N, E)$ is a word graph with nodes $N$ and edges
$E\subseteq N\times A \times N$, and $\kappa\subseteq N \times N$ is an
equivalence relation. The \defn{quotient $\Gamma/\kappa$ of $\Gamma$ by
$\kappa$} is the word graph with nodes $\set{\min \alpha/\kappa}{\alpha \in
N}$, and edges $(\min \alpha /\kappa, a, \min \beta/\kappa)$ whenever $(\alpha,
a, \beta) \in E$. It is routine to verify that if $\Gamma= (N, E)$ is a word
graph, $\kappa$ is any equivalence relation on $N$, and $\kappa'$ is any
equivalence relation on the set of nodes of $\Gamma/ \kappa$, then
$(\Gamma/\kappa) / \kappa' = \Gamma / \kappa''$ where $\kappa''$ is the least
equivalence on $N$ containing $\kappa$ and every $(\alpha, \beta)\in N \times
N$ where $(\min \alpha /\kappa, \min \beta/\kappa) \in \kappa'$. If $\Gamma_1=
(N_1, E_1)$ and $\Gamma_2= (N_2, E_2)$ are word graphs over the same alphabet
$A$, then $\phi:N_1\to N_2$ is a \defn{homomorphism} if $(\alpha, a, \beta)\in
E_1$ implies $((\alpha)\phi, a, (\beta)\phi)\in E_2$; and we write $\phi:
\Gamma_1\to \Gamma_2$. An \defn{isomorphism} of word graphs $\Gamma_1$ and
$\Gamma_2$ is a bijection $\phi: \Gamma_1\to \Gamma_2$ such that both $\phi$
and $\phi ^ {-1}$ are homomorphisms. If $\kappa$ is any equivalence relation on
a word graph $\Gamma = (N, E)$, then the function $\phi: \Gamma \to \Gamma /
\kappa$ defined by $(\alpha)\phi= \alpha/ \kappa$ is a homomorphism.
Conversely, if $\phi: \Gamma_1 \to \Gamma_2$  is a homomorphism, then
$(\Gamma_1)\phi := \set{(\alpha)\phi\in N_2}{\alpha\in N_1}$ is isomorphic to
the quotient $\Gamma_1 / \ker(\phi)$. If $(\alpha_1, a_1, \alpha_{2}), \ldots,
(\alpha_{n}, a_{n}, \alpha_{n + 1})\in E_1$ is a path in $\Gamma_1$ and $\phi:
\Gamma_1\to \Gamma_2$ is a homomorphism, then, by definition, $((\alpha_1)\phi,
a_1, (\alpha_{2})\phi), \ldots, ((\alpha_{n})\phi, a_{n}, (\alpha_{n +
1})\phi)\in E_2$ is a path in $\Gamma_2$ with the same label $a_1 \cdots a_n
\in A ^ *$. In this way, we say that homomorphisms of word graphs preserve
paths and labels of paths. This leads to the following straightforward lemma.

\begin{lemma}\label{lemma-path-equiv}
  Let $\Gamma_1= (N_1, E_1)$ and $\Gamma_2= (N_2, E_2)$ be word graphs, and let
  $\pi_{\Gamma_1}: N_1 \to \mathfrak{P}(A ^ * \times A ^ *)$ and
  $\pi_{\Gamma_2}:N_2\to \mathfrak{P}(A ^ * \times A ^ *)$ be the path
  relations of $\Gamma_1$ and $\Gamma_2$, respectively. If $\phi: \Gamma_1\to
  \Gamma_2$ is a homomorphism and $\alpha\in N_1$ is arbitrary, then
  $(\alpha)\pi_{\Gamma_1}\subseteq((\alpha)\phi) \pi_{\Gamma_2}$.
\end{lemma}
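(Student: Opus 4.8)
The plan is to argue directly from the definition of the path relation, leaning on the fact---recorded in the paragraph immediately preceding the statement---that homomorphisms of word graphs preserve paths together with their labels. The entire content of the lemma is that this path-preservation property carries a witness for membership in $(\alpha)\pi_{\Gamma_1}$ over to a witness for membership in $((\alpha)\phi)\pi_{\Gamma_2}$, so the argument is essentially an unwinding of definitions.

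Concretely, I would fix an arbitrary pair $(u, v) \in (\alpha)\pi_{\Gamma_1}$. By the definition of $\pi_{\Gamma_1}$, there is a node $\beta \in N_1$ such that both $u$ and $v$ label $(\alpha, \beta)$-paths in $\Gamma_1$. Writing $u = a_1 \cdots a_m$ and choosing a corresponding $(\alpha, \beta)$-path $(\alpha_1, a_1, \alpha_2), \ldots, (\alpha_m, a_m, \alpha_{m + 1})$ with $\alpha_1 = \alpha$ and $\alpha_{m + 1} = \beta$, I would apply $\phi$ edge-by-edge: since $\phi$ is a homomorphism, $((\alpha_1)\phi, a_1, (\alpha_2)\phi), \ldots, ((\alpha_m)\phi, a_m, (\alpha_{m + 1})\phi)$ is then a path in $\Gamma_2$ from $(\alpha)\phi$ to $(\beta)\phi$ with label $a_1 \cdots a_m = u$. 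Running the identical argument on the chosen $(\alpha, \beta)$-path labelled by $v$ shows that $v$ likewise labels an $((\alpha)\phi, (\beta)\phi)$-path in $\Gamma_2$. Hence the single common target node $(\beta)\phi$ witnesses $(u, v) \in ((\alpha)\phi)\pi_{\Gamma_2}$.

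As $(u, v)$ was arbitrary, this would establish the desired inclusion $(\alpha)\pi_{\Gamma_1} \subseteq ((\alpha)\phi)\pi_{\Gamma_2}$. The only point worth isolating is the empty word: if $u = \varepsilon$, it labels the trivial path of length $0$ at $\alpha$, whose image under $\phi$ is the trivial path at $(\alpha)\phi$, again labelled by $\varepsilon$, so the same reasoning applies without change. I do not expect any genuine obstacle here---the mathematical substance is entirely contained in the already-established path-preservation property, and this lemma merely repackages it at the level of path relations.
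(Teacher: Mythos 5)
Your proof is correct and is essentially the argument the paper intends: the lemma is stated immediately after the remark that homomorphisms of word graphs preserve paths and their labels, and the paper omits a formal proof as ``straightforward'' precisely because it amounts to the edge-by-edge unwinding you carry out (mapping the two $(\alpha,\beta)$-paths labelled $u$ and $v$ to $((\alpha)\phi,(\beta)\phi)$-paths with the same labels). Your extra note on the empty word is a harmless, correct refinement of the same approach.
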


Before giving the definition of a congruence enumeration, we highlight that
many accounts of the Todd-Coxeter algorithm (see, for
instance,~\cite{B.-Eick2004aa, Neubuser1982aa, Sims1994aa}) are not formulated
in terms of digraphs, but rather as a \textit{table} whose rows are labelled by
a set $C$ of non-negative integers, and columns are labelled by the generating
set $A$. If $\Gamma = (N, E)$ is a deterministic word graph and $f: C \to N$ is
a bijection such that $(0)f = 0$, then the value in the row labelled $c$ and
column labelled $a$ is $f ^ {-1}$ of the target of the unique edge in $\Gamma$
with source $(c)f$ and label $a$. According to
Neub{\"u}ser~\cite{Neubuser1982aa}, until the 1950s, congruence enumeration was
often performed by hand, and, in this context, using tables is more
straightforward than using graphs. On the other hand, the language of word
graphs provide a more accessible means of discussing congruence enumeration in
theory.


\subsection{The definition}

Recall that we suppose throughout that $\langle A| R\rangle$ for some
$R\subseteq A ^ * \times A ^ *$ is a finite monoid presentation. Additionally,
we will suppose throughout that $A$ is a totally ordered alphabet. A congruence
enumeration is a sequence of the following steps \textbf{TC1}, \textbf{TC2},
and \textbf{TC3} where the input to the $i$-th step (where $i\in \N$) is
$(\Gamma_{i}, \kappa_{i})$ for some word graph $\Gamma_{i}$ with a totally
ordered set of nodes $N_i$ and set of edges $E_i$, and some equivalence
relation  $\kappa_{i}\subseteq N_i \times N_i$.

\begin{description}
  \item[TC1 (define a new node).]\label{description-tc-1}
    If $\alpha$ is a node in $\Gamma_{i}$ and there is no edge in $\Gamma_{i}$
    labelled by $a\in A$ with source $\alpha$, then we define $\Gamma_{i + 1}$
    to be the word graph obtained from $\Gamma_{i}$ by adding the new node
    $\beta := 1 + \max \bigcup_{j \leq i} N_j$ and the edge $(\alpha, a,
    \beta)$. We define $\kappa_{i + 1} := \kappa_{i} \cup \{(\beta, \beta)\}$.
  \item[TC2 (follow the paths defined by a relation).]\label{description-tc-2}
    Suppose that $\alpha\in N_i$ and $(u,v)\in R$ where $u = u_1a$ and
    $v= v_1b$ for some $u_1, v_1\in A ^ {*}$ and $a, b \in A$.
    \begin{enumerate}
      \item
            If $u$ and $v_1$ label paths from $\alpha$ to some nodes $\beta,
            \gamma \in N_i$ in $\Gamma_{i}$, respectively, but $\gamma$ is not
            the source of any edge labelled by $b$, then we set $\Gamma_{i +
            1}$ to be the word graph obtained from $\Gamma_{i}$ by adding the
            edge $(\gamma, b, \beta)$ and we define $\kappa_{i + 1} :=
            \kappa_{i}$.

      \item
            The dual of (a) where there are paths labelled by $u_1$ and $v$ to
            nodes $\beta$ and $\gamma$, respectively, but $\beta$ is not the
            source of any edge labelled by $a$.

      \item
            If $u$ and $v$ label paths from $w$ to some nodes $\beta$ and
            $\gamma$, respectively, and $\beta \neq \gamma$, then we define
            $\Gamma_{i + 1} := \Gamma_{i}$ and $\kappa_{i + 1}$ to be the least
            equivalence containing $\kappa_{i}$ and $(\beta , \gamma)$.
    \end{enumerate}
    Note that conditions (a), (b), and (c) are mutually exclusive, and it
    may be the case that none of them hold.

    \begin{figure}
      \centering
      \includegraphics{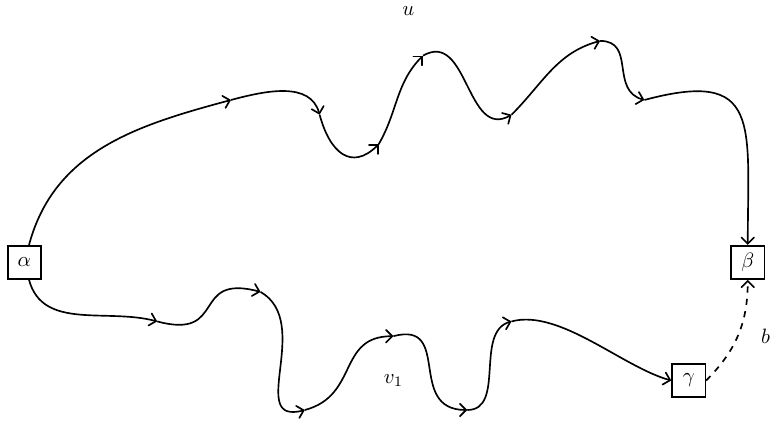}
      \caption{A diagram representing \textbf{TC2}(a), solid lines correspond
      to paths in $\Gamma_{i}$ and the dashed edge is the one defined in
      \textbf{TC2}(a).}\label{fig-TC2}
    \end{figure}

  \item[TC3 (process coincidences or a determination).]\label{description-tc-3}
    We define $\Gamma_{i + 1}$ to be the quotient of $\Gamma_{i}$ by
    $\kappa_{i}$ and define $\kappa_{i + 1}$ to be the least equivalence on
    $N_{i + 1}$ containing every $(\beta, \gamma)$, $\beta \neq \gamma$, for
    which there exist $\alpha \in N_{i + 1}$ and $a\in A$ such that $(\alpha,
    a, \beta), (\alpha, a, \gamma)\in E_{i + 1}$. Recall that the quotient
    $\Gamma_i/\kappa_i$ of $\Gamma_i$ by $\kappa_i$ is the word graph with
    nodes $\set{\min \alpha/\kappa_i}{\alpha \in N_i}$ and hence after an
    application of \textbf{TC3} each node in $\Gamma_{i + 1}$ is set to be
    equal to the minimum of the set of nodes in its equivalence class in
    $\kappa_i$.
\end{description}

There are only finitely many possible quotients of any word graph. Hence if
\textbf{TC3} is applied repeatedly, then after finitely many iterations the
output $\kappa_{i + 1}$ will equal $\Delta_{N_i}$, and $\Gamma_{i + 1}$ and
$\Gamma_{i}$ will be equal.

If $w$ labels an $(\alpha, \beta)$-path $P$ in some $\Gamma_{i}$, then neither
\textbf{TC1} nor \textbf{TC2} changes any of the edges belonging to $P$. Hence
if $\Gamma_{i + 1}$ is obtained from $\Gamma_{i}$ by applying \textbf{TC1} or
\textbf{TC2}, then $w$ labels an $(\alpha,\beta)$-path in $\Gamma_{i + 1}$
also. If $\Gamma_{i + 1}$ is obtained from $\Gamma_{i}$ by applying
\textbf{TC3}, then $\Gamma_{i + 1} $ is a homomorphic image of $\Gamma_{i}$. As
already noted, homomorphisms preserve paths, and so $w$ labels a path in
$\Gamma_{i + 1}$ also.

We can now formally define a congruence enumeration.


\begin{de}[\textbf{\textit{Congruence enumeration.}}]\label{de-2-sided-enumeration-process}
  Suppose that $A$ is a finite alphabet, that $R, S \subseteq A ^ * \times A ^
  *$ are finite, that $\rho$ is the least right congruence on $A ^ *$
  containing both $R ^ {\#}$ and $S$, and that $\Gamma_{1} = (N_1, E_1)$ is a
  word graph with path relation $\pi_{\Gamma_1}: N_1 \to \mathfrak{P}(A ^ *
  \times A ^ *)$ such that $(0)\pi_{\Gamma_1}\subseteq \rho$ and $\kappa_{1} =
  \Delta_{N_1}$. Then  a \defn{congruence enumeration for $\rho$} with input
  $(\Gamma_{1}, \kappa_{1})$ consists of:
  \begin{enumerate}
    \item
          For every $(u, v)\in S$, by repeatedly applying \textbf{TC1} (if
          necessary), add edges to $\Gamma_{1}$ so that it contains paths
          labelled by $u$ and $v$ both with source $0$.
    \item
          Apply \textbf{TC2} to $0$ and every $(u, v) \in S$.
  \end{enumerate}
  If $(\Gamma_{m}, \kappa_{m})$ is the output of steps (a) and (b), then the
  enumeration is concluded by performing any sequence of applications of
  \textbf{TC1}, \textbf{TC2}, and \textbf{TC3} such that the following
  conditions hold for $\Gamma_{i} = (N_i, E_i)$ for every $i\in \N$, $i\geq m$:
  \begin{enumerate}
    \addtocounter{enumi}{2}
    \item
          If $\alpha \in N_i$ and there is no edge incident to $\alpha$ with
          label $a\in A$, then there exists $j\geq i$ such that either:
          $\alpha$ is no longer a node in $\Gamma_j$; or there is an edge
          incident to $\alpha$ with label $a$ in $\Gamma_j$.

    \item
          If  $\alpha\in N_i$ and $(u, v)\in R$, then there exists $j\geq i$ such that either:
          $\alpha$ is no longer a node in $\Gamma_j$ or $(u, v)\in (\alpha)\pi_{\Gamma_j}$.

    \item
          If $\kappa_{i}\neq \Delta_{N_i}$ for some $i$, then there exists
          $j\geq i$ such that $\kappa_j = \Delta_{N_j}$.
  \end{enumerate}
\end{de}


The initial value of the word graph $\Gamma_{1}$ that forms the input to a
congruence enumeration is usually either the trivial word graph or, if $M$ is
finite, the right Cayley graph of the monoid $M$ defined by the presentation
$\langle A |R \rangle$; see~\cref{section-non-abstract} for more details.

A congruence enumeration \textbf{\textit{terminates}} if the output
$(\Gamma_{i}, \kappa_{i})$ has the property that applying any of \textbf{TC1},
\textbf{TC2}, or \textbf{TC3} to $(\Gamma_{i}, \kappa_{i})$ results in no
changes to the output, i.e.\ $(\Gamma_{i + 1}, \kappa_{i + 1}) = (\Gamma_{i},
\kappa_{i})$. It is straightforward to verify that a congruence enumeration
terminates at step $i$ if and only if $\Gamma_{i}$ is complete, compatible with
$R \cup S$, and deterministic.


For any given finite monoid presentation, there is a wide range of choices
for the order in which steps \textbf{TC1}, \textbf{TC2}, and \textbf{TC3}
are performed, and to which nodes, generators, and relations they are applied.
We will examine two specific strategies for enumerating congruences for an
arbitrary finite monoid presentation in more detail in \cref{section-hlt} and
\cref{section-felsch}.


\section{Validity}\label{section-validity}

In this section we address the validity of congruence enumeration as
defined in \cref{section-enumeration}; we will continue to use the
notation established therein.

The main results in this section are the following.

\begin{restatable}{thm}{validity}\label{validity}
  Let $A$ be a finite alphabet, let $R\subseteq A ^ * \times A ^ *$ be a finite set,
  and let $R ^ {\#}$ be the least two-sided congruence on $A ^ *$ containing
  $R$. If $S\subseteq A ^ * \times A ^ *$ is any finite set, and $\rho$ is
  the least right congruence on $A ^ *$ containing $R ^ {\#}$ and $S$, then the
  following hold:
  \begin{enumerate}
    \item[\rm (a)]
      If a congruence enumeration for $\rho$ terminates with output word graph
      $\Gamma=(N, E)$, then $A ^ * / \rho$ is finite and the function $\phi: N
      \times A ^ * \to N$, defined by $(\alpha, w)\phi = \beta$ whenever $w$
      labels an $(\alpha, \beta)$-path in $\Gamma$, is a right action that is
      isomorphic to the natural action of $A ^ *$ on $A ^ * / \rho$ by right
      multiplication.

    \item[\rm (b)]
      If $A ^ * / \rho$ is finite, then any congruence enumeration for $\rho$
      terminates.
  \end{enumerate}
\end{restatable}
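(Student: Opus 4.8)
The plan is to run both parts off a single invariant: at every stage the word graph $\Gamma_i$ admits a homomorphism into a fixed word graph $\Gamma_\rho$ that encodes $\rho$. Let $\Gamma_\rho$ be the complete, deterministic word graph whose nodes are the classes of $A^*/\rho$ and whose edges are $(w/\rho,\,a,\,wa/\rho)$ for $w\in A^*$, $a\in A$; its path relation satisfies $(\varepsilon/\rho)\pi_{\Gamma_\rho}=\rho$, and since $R^{\#}$ is a two-sided congruence contained in $\rho$, one checks that $\Gamma_\rho$ is compatible with $R$ at every node. I would first record that every node of $\Gamma_i$ is reachable from $0$ (true for the trivial word graph and for the Cayley graph, the usual inputs, and preserved by TC1, TC2, TC3), so that the assignment $\theta_i\colon\alpha\mapsto w/\rho$, where $w$ labels any $(0,\alpha)$-path, is a candidate map. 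I then show by induction on $i$ that $\theta_i$ is a well-defined homomorphism $\Gamma_i\to\Gamma_\rho$ with $(0)\theta_i=\varepsilon/\rho$. The base case is exactly $(0)\pi_{\Gamma_1}\subseteq\rho$ for well-definedness, and the homomorphism property is immediate from the definition of $\Gamma_\rho$. For TC1 the new node has a unique incoming edge and inherits the forced image $\theta\alpha\cdot a$. For TC2(a) (dually (b)) the new edge $(\gamma,b,\beta)$ is consistent because $(u_1a,v_1b)\in R$ and compatibility of $\Gamma_\rho$ with $R$ give $\theta\alpha\cdot u_1a=\theta\alpha\cdot v_1b$, i.e.\ $\theta\beta=\theta\gamma\cdot b$. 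For TC2(c) and the determination coincidences of TC3, every identified pair $(\beta,\gamma)$ already satisfies $\theta\beta=\theta\gamma$ (by compatibility, respectively by determinism of $\Gamma_\rho$), so $\kappa_i\subseteq\ker\theta_i$ and $\theta_i$ descends along the quotient. By \cref{lemma-path-equiv} this yields the key inclusion $(0)\pi_{\Gamma_i}\subseteq\rho$ at every step.

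For part (a), suppose the enumeration terminates with output $\Gamma=(N,E)$; then $\Gamma$ is complete and deterministic, is compatible with $R$, and satisfies $S\subseteq(0)\pi_\Gamma$ (the last because steps (a) and (b) place $u$ and $v$ on $(0,\beta)$-paths to a common node for each $(u,v)\in S$, and TC1--TC3 preserve existing paths). Completeness and determinism make $\phi$ a totally defined right action and make $\psi\colon A^*\to N$, $w\mapsto(0,w)\phi$, a surjective homomorphism of right actions with $\ker\psi=(0)\pi_\Gamma$. Now $(0)\pi_\Gamma$ is a right congruence, it contains $S$, and compatibility at every node forces $R^{\#}\subseteq(0)\pi_\Gamma$: for $(r,r')\in R$ and $p,q\in A^*$, follow $p$ from $0$ to a node $\alpha$, use compatibility at $\alpha$ to send $r,r'$ to a common node, and append $q$, so $(prq,pr'q)\in(0)\pi_\Gamma$; transitivity then absorbs whole elementary sequences. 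Minimality of $\rho$ gives $\rho\subseteq(0)\pi_\Gamma$, and with the invariant this forces $\ker\psi=(0)\pi_\Gamma=\rho$. Hence $\psi$ factors through a bijection $A^*/\rho\to N$ intertwining the natural action with $\phi$, i.e.\ an isomorphism of right actions; and $A^*/\rho$ is finite because $N$ is.

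For part (b), assume $A^*/\rho$ is finite with $n$ classes. Since $\theta_i$ maps into the $n$-element set $A^*/\rho$, the number of realised classes $|(N_i)\theta_i|$ is non-decreasing (TC1 and TC2 never shrink the image, TC3 identifies nodes of equal image) and bounded by $n$, hence eventually constant; condition (c) forces this constant to be $n$, since any class adjacent in $\Gamma_\rho$ to a realised one is realised once the corresponding missing edge is added. Thus eventually $\theta_i$ is onto. It remains to force $\theta_i$ injective, equivalently to reach a complete, deterministic graph compatible with $R$ (with $S$ already handled at $0$), which is exactly a terminal configuration. Here I would argue that any two nodes reached from $0$ by $\rho$-equivalent words are eventually merged: writing the equivalence as a sequence of two-sided $R$-moves $prq\!\to\!pr'q$ and right $S$-moves $uq\!\to\!vq$, condition (d) applied at the node reached by the relevant prefix, and the initialisation at $0$, make consecutive terms of the sequence reach a common node, so the endpoints coincide; condition (e) guarantees these identifications are carried out and $\kappa$ returns to $\Delta$. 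Once the graph is complete, deterministic and compatible, $(0)\pi_\Gamma\supseteq\rho$ as in part (a), so $\theta$ is bijective, $|N|=n$, and no application of TC1, TC2 or TC3 changes anything.

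The main obstacle is part (b): turning the fairness conditions (c)--(e) into a genuine halting argument. The delicate points are ruling out that TC1 manufactures fresh duplicate nodes forever despite the bound $n$, and making precise the claim that $\rho$-equivalent words are eventually routed to the same node---this means tracking a single elementary sequence through a graph whose nodes are repeatedly renamed by quotients, invoking conditions (d) and (e) at the right stages. By contrast, the invariant of the first paragraph and the whole of part (a) are essentially bookkeeping once $\Gamma_\rho$ and \cref{lemma-path-equiv} are in place.
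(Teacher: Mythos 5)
Your part (a) is correct and takes a genuinely different, arguably cleaner route than the paper. Your invariant --- a homomorphism $\theta_i\colon \Gamma_i\to\Gamma_\rho$ with $(0)\theta_i=\varepsilon/\rho$ and $\kappa_i\subseteq\ker\theta_i$ --- is a tidy repackaging of \cref{prop-tc-invariant} (via \cref{lemma-path-equiv} it gives both $(0)\pi_{\Gamma_i}\subseteq\rho$ and $(0)\pi_{\Gamma_i/\kappa_i}\subseteq\rho$), and your terminal-graph analysis then gets the reverse inclusion $\rho\subseteq(0)\pi_\Gamma$ directly from completeness, determinism, compatibility with $R$, and minimality of $\rho$, with no limit arguments at all. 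The paper instead builds a bijection $f$ on the set $M$ of persistent nodes and needs \cref{lemma-injective}, \cref{cor-injective}, \cref{cor-rho-tau-again} and \cref{lemma-determinism} even for part (a); your version buys locality at the cost of nothing. One small slip: steps (a) and (b) of \cref{de-2-sided-enumeration-process} do \emph{not} themselves place $u$ and $v$ on paths to a common node --- \textbf{TC2}(c) only records the pair $(\beta,\gamma)$ in $\kappa$; you need condition (e), a later application of \textbf{TC3}, and preservation of paths under homomorphic images to conclude $S\subseteq(0)\pi_\Gamma$ in the terminal graph. This is a one-line repair.

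Part (b), however, has a genuine gap, and it is exactly the one you flag yourself: nothing in your argument converts ``every pair of $\rho$-equivalent words is eventually routed to a common node'' into ``after some finite step, no application of \textbf{TC1}, \textbf{TC2} or \textbf{TC3} changes anything''. Your scenario of \textbf{TC1} manufacturing fresh transient duplicates forever --- each eventually merged away, but with new ones always pending --- is not excluded by conditions (c)--(e) alone as you use them. The missing idea, which the paper supplies as \cref{lemma-stabilise}, is a monotonicity of node labels: \textbf{TC1} always assigns a label strictly larger than every label used so far ($1+\max\bigcup_{j\leq i}N_j$), while \textbf{TC3} replaces a node only by the \emph{minimum} of its $\kappa$-class, so a surviving node's identity can only decrease and its out-edges change only finitely often. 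This yields a well-defined set $M$ of persistent nodes; injectivity of $\theta$ on $M$ (your merging claim, made precise against renamings --- this is where the paper needs \cref{lemma-determinism}) bounds $|M|\leq n$; and then condition (c) together with stabilisation makes all $|A|\cdot|M|$ edges among $M$ permanent from some step onward, after which the process is quiescent. Note also that your surjectivity count $|(N_i)\theta_i|\nearrow n$, while correct as far as monotonicity goes, is not what is needed (the paper's part (b) uses only injectivity and finiteness of $M$), and it quietly suffers the same persistence problem: applying condition (c) to ``a node realising class $x$'' proves nothing if that node dies before the edge is defined; you need a persistent realiser, obtained again from stabilisation or a minimal-label argument. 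In short: your framework is sound and part (a) is done, but part (b) cannot be completed without proving something equivalent to \cref{lemma-stabilise}, which is the paper's decisive lemma and is absent from your proposal.
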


\begin{restatable}{cor}{corollaryvalidity}
  \label{cor-tc-terminates}
  Let $A$ be a finite alphabet, let $R\subseteq A ^ * \times A ^ *$ be a finite
  set, and let $R ^ {\#}$ be the least two-sided congruence on $A ^ *$
  containing $R$. Then the following hold:
  \begin{enumerate}
    \item[\rm (a)]
      If a congruence enumeration for $R ^ {\#}$ terminates with output word
      graph $\Gamma=(N, E)$, then $A ^ * / R ^ {\#}$ is finite and the function
      $\phi : N \times A ^ * \to N$ defined by $(\alpha, w)\phi = \beta$
      whenever $w$ labels an $(\alpha, \beta)$-path in $\Gamma$ is a right
      action that is isomorphic to the (faithful) natural action of $A ^ * / R
      ^ {\#}$ on itself by right multiplication.

    \item[\rm (b)]
      If $A ^ * / R ^ {\#}$ is finite, then any congruence enumeration for $R ^
      {\#}$ terminates.
  \end{enumerate}
\end{restatable}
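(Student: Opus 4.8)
The plan is to deduce \cref{cor-tc-terminates} from \cref{validity} by specialising to the case $S = \varnothing$. First I would observe that when $S = \varnothing$, the least right congruence $\rho$ on $A ^ *$ containing $R ^ {\#}$ and $S$ is precisely $R ^ {\#}$ itself: since $R ^ {\#}$ is a two-sided congruence it is in particular a right congruence, so the least right congruence containing it is $R ^ {\#}$. Consequently a congruence enumeration for $R ^ {\#}$ in the sense of \cref{de-2-sided-enumeration-process} is exactly a congruence enumeration for $\rho$ with $S = \varnothing$, and parts (a) and (b) of \cref{cor-tc-terminates} become instances of the corresponding parts of \cref{validity}, once the two descriptions of the limiting action in part (a) are reconciled.

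Part (b) is then immediate: if $A ^ * / R ^ {\#} = A ^ * / \rho$ is finite, then \cref{validity}(b) guarantees that any congruence enumeration for $\rho$ terminates, which is the assertion.

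For part (a), \cref{validity}(a) already gives that $\phi$ is a right action isomorphic to the natural action $\Psi$ of $A ^ *$ on $A ^ * / R ^ {\#}$ determined by $(w / R ^ {\#}) \cdot a = (wa) / R ^ {\#}$. It remains to identify $\Psi$ with the right regular action of the monoid $M = A ^ * / R ^ {\#}$ on itself and to verify that the latter is faithful. The key computation is that $\ker(\Psi) = R ^ {\#}$: evaluating the defining condition at the class of the empty word $\varepsilon$ shows that $(s, t) \in \ker(\Psi)$ forces $(s, t) \in R ^ {\#}$, while the reverse inclusion holds because $R ^ {\#}$ is a two-sided congruence, so $(s, t) \in R ^ {\#}$ implies $(ws, wt) \in R ^ {\#}$ for every $w \in A ^ *$. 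Since $R ^ {\#} \subseteq \ker(\Psi)$ is a congruence on $A ^ *$, \cref{prop-quotient-action} yields an action $\overline{\Psi}$ of $A ^ * / R ^ {\#} = M$ on $A ^ * / R ^ {\#}$ that is isomorphic to $\Psi$; unwinding the definitions, $\overline{\Psi}$ sends $(w / R ^ {\#}, s / R ^ {\#})$ to $(ws) / R ^ {\#}$, so it is exactly the action of $M$ on itself by right multiplication. Faithfulness follows by evaluating at the identity $\varepsilon / R ^ {\#}$ of $M$: if $s / R ^ {\#}$ and $t / R ^ {\#}$ act identically, then in particular $(\varepsilon / R ^ {\#}) \cdot (s / R ^ {\#}) = (\varepsilon / R ^ {\#}) \cdot (t / R ^ {\#})$, i.e.\ $s / R ^ {\#} = t / R ^ {\#}$. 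Chaining the isomorphisms $\phi \cong \Psi \cong \overline{\Psi}$ then gives that $\phi$ is isomorphic to the faithful right regular action of $M$, as claimed.

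The argument presents no serious obstacle, as the statement is genuinely a corollary; the only point that requires care is the identification of actions in part (a) — namely verifying $\ker(\Psi) = R ^ {\#}$ so that \cref{prop-quotient-action} is applicable, and recognising that the resulting quotient action is precisely the faithful right regular representation of $M = A ^ * / R ^ {\#}$.
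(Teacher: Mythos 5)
Your proposal is correct and follows essentially the same route as the paper's proof: specialise \cref{validity} to $S = \varnothing$ (so that $\rho = R^{\#}$), invoke \cref{prop-quotient-action} to pass from the $A^*$-action to the induced action of $A^*/R^{\#}$, and conclude faithfulness from the right regular representation of the monoid $A^*/R^{\#}$. The only immaterial difference is bookkeeping: the paper computes $\ker(\phi) = R^{\#}$ on the word graph via \cref{cor-rho-tau} and applies \cref{prop-quotient-action} to both the action on $N$ and the action on $A^*/R^{\#}$, whereas you compute $\ker(\Psi) = R^{\#}$ directly for the natural action on the quotient (evaluating at $\varepsilon/R^{\#}$, and using two-sidedness of $R^{\#}$ for the converse) and then transport the conclusion through the isomorphism $\phi \cong \Psi$ already supplied by \cref{validity}(a).
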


We will prove \cref{validity} and \cref{cor-tc-terminates} in
\cref{subsection-validity-proof}. We start by showing that the steps
\textbf{TC1}, \textbf{TC2}, and \textbf{TC3}  preserve certain properties of
word graphs in  \cref{prop-tc-invariant}. In
\cref{subsection-validity-termination}, we show that every congruence
enumeration eventually stabilises and is eventually compatible with $R$; and in
\cref{subsection-validity-proof} we give the proofs of \cref{validity} and
\cref{cor-tc-terminates}.


We will make repeated use, without reference, to the following straightforward lemma.

\begin{lemma}\label{lemma-connected}
  If any of \textbf{TC1}, \textbf{TC2}, or \textbf{TC3} is applied to
  $(\Gamma_{i}, \kappa_{i})$ where  every node in $\Gamma_i$ is reachable from
  $0$, then every node in the output $\Gamma_{i + 1}$ satisfies is reachable
  from $0$ also.
\end{lemma}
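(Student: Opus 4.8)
The plan is to verify the claim separately for each of the three operations \textbf{TC1}, \textbf{TC2}, and \textbf{TC3}, assuming as the hypothesis that every node of $\Gamma_i$ is reachable from $0$. In each case I must show that every node of $\Gamma_{i+1}$ is reachable from $0$ in $\Gamma_{i+1}$. The key observation, which I would establish first, is that all three operations preserve existing paths: by the discussion immediately preceding \cref{de-2-sided-enumeration-process}, if $w$ labels an $(\alpha,\beta)$-path in $\Gamma_i$ and $\Gamma_{i+1}$ is obtained by any of \textbf{TC1}, \textbf{TC2}, or \textbf{TC3}, then $w$ labels an $(\alpha,\beta)$-path in $\Gamma_{i+1}$ (for \textbf{TC3} this uses that the quotient map is a homomorphism and homomorphisms preserve paths).

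For \textbf{TC1}, the node set of $\Gamma_{i+1}$ is $N_i \cup \{\beta\}$ where $\beta$ is the freshly defined node and $(\alpha, a, \beta)$ is the new edge with $\alpha \in N_i$. Every node of $N_i$ is reachable from $0$ in $\Gamma_i$, hence in $\Gamma_{i+1}$ by path preservation. The only new node $\beta$ is reachable by concatenating a path from $0$ to $\alpha$ with the new edge $(\alpha, a, \beta)$. For \textbf{TC2} in cases (a) and (b), no nodes are added (only an edge), so the node set is unchanged and reachability is inherited directly via path preservation; in case (c) neither the node set nor the edge set of the graph changes (only $\kappa$ is updated), so there is nothing to check.

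For \textbf{TC3}, the graph $\Gamma_{i+1}$ is the quotient $\Gamma_i/\kappa_i$, whose node set is $\{\min \alpha/\kappa_i : \alpha \in N_i\}$, and the quotient map $\phi: \Gamma_i \to \Gamma_{i+1}$, $\alpha \mapsto \min \alpha/\kappa_i$, is a surjective homomorphism. Given an arbitrary node $\delta = \min \alpha/\kappa_i$ of $\Gamma_{i+1}$, pick a representative $\alpha \in N_i$. By hypothesis there is a $(0, \alpha)$-path in $\Gamma_i$; applying $\phi$ and using that homomorphisms preserve paths (and that $(0)\phi = 0$, since $0$ is the minimum of its class by $0 \in N$ and the conventions on $N$), the image is a $(0, \delta)$-path in $\Gamma_{i+1}$, so $\delta$ is reachable from $0$.

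The routine part is the bookkeeping; the only point requiring a moment's care is that $0$ remains the source node after \textbf{TC3}, i.e.\ that $\phi$ sends $0$ to $0$. I would note that $0$ is the minimal element of $N \subseteq \mathbb{N}$, so $\min 0/\kappa_i = 0$, and hence the image of a path sourced at $0$ is again sourced at $0$. With that fixed, the argument is a direct case analysis and presents no real obstacle.
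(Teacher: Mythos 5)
Your proof is correct; the paper actually states this lemma without proof (it is flagged as ``straightforward'' and used without reference), and your case analysis is exactly the routine argument the authors intend, resting on the path-preservation observations made just before the definition of a congruence enumeration. You also correctly handle the one genuinely delicate point, namely that under \textbf{TC3} the quotient map fixes $0$, since $0 \in N \subseteq \N$ forces $\min 0/\kappa_i = 0$, so images of $(0,\alpha)$-paths remain sourced at $0$.
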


The next proposition also plays a crucial role in the proof of \cref{validity}
and \cref{cor-tc-terminates}.

\begin{prop}\label{prop-tc-invariant}
  If \textbf{TC1}, \textbf{TC2}, or \textbf{TC3} is applied to $(\Gamma_{i},
  \kappa_{i})$ where  $\Gamma_{i} = (N_i, E_i)$ is a word graph such that
  $(0)\pi_{\Gamma_i}\subseteq \rho$, and $(0)\pi_{\Gamma_i / \kappa_i}\subseteq
  \rho$, then the output $(\Gamma_{i + 1}, \kappa_{i + 1})$ satisfies
  $(0)\pi_{\Gamma_i} \subseteq (0)\pi_{\Gamma_{i + 1}} \subseteq \rho$, and
  $(0)\pi_{\Gamma_{i}/ \kappa_{i}} \subseteq (0)\pi_{\Gamma_{i + 1}/ \kappa_{i
  + 1}}\subseteq \rho$.
\end{prop}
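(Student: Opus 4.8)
The plan is to treat the three operations \textbf{TC1}, \textbf{TC2}, and \textbf{TC3} separately, since each affects the word graph in a structurally different way, and to establish the two chains of inclusions in each case. The left inclusions $(0)\pi_{\Gamma_i} \subseteq (0)\pi_{\Gamma_{i+1}}$ and $(0)\pi_{\Gamma_i/\kappa_i} \subseteq (0)\pi_{\Gamma_{i+1}/\kappa_{i+1}}$ should follow almost immediately from the remarks preceding the definition of congruence enumeration: for \textbf{TC1} and \textbf{TC2} no existing edge is deleted, so any path present in $\Gamma_i$ survives in $\Gamma_{i+1}$, and hence any pair realised as a coincident pair of $(0,\beta)$-paths is still realised; for \textbf{TC3} the quotient map $\phi \colon \Gamma_i \to \Gamma_{i+1} = \Gamma_i/\kappa_i$ is a homomorphism fixing $0$, so \cref{lemma-path-equiv} gives $(0)\pi_{\Gamma_i} \subseteq (0)\phi\,\pi_{\Gamma_{i+1}} = (0)\pi_{\Gamma_{i+1}}$ directly. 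The monotonicity for the quotient graphs $\Gamma_i/\kappa_i$ is handled the same way once one observes, using the composition-of-quotients identity stated in the text, that $\Gamma_{i+1}/\kappa_{i+1}$ is a further quotient of $\Gamma_i/\kappa_i$, so again \cref{lemma-path-equiv} applies.

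The substance of the proposition is the two right inclusions, namely that the new path relations stay inside $\rho$. For \textbf{TC1} this is easy: the single new edge $(\alpha, a, \beta)$ points to a brand-new node $\beta$ that is the target of no other edge and the source of no edge, so the only new $(0,\beta)$-paths are obtained by appending $a$ to an existing $(0,\alpha)$-path; if $u$ and $v$ are two such new paths then $u = u'a$, $v = v'a$ with $u', v'$ labelling $(0,\alpha)$-paths in $\Gamma_i$, whence $(u', v') \in (0)\pi_{\Gamma_i} \subseteq \rho$ and so $(u'a, v'a) = (u, v) \in \rho$ because $\rho$ is a \emph{right} congruence. For \textbf{TC2}(a) (and dually (b)) the new edge is $(\gamma, b, \beta)$ where, by hypothesis, $u = u_1 a$ labels a $(0,\beta)$-path, $v_1$ labels a $(0,\gamma)$-path, and $(u_1 a, v_1 b) = (u, v) \in R \subseteq R^{\#} \subseteq \rho$. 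The key point is to argue that every genuinely new coincident pair of $(0,\delta)$-paths in $\Gamma_{i+1}$ arises from substituting the subword $v_1 b$ for $u_1 a$ (or vice versa) along a path that formerly had to route through $\beta$; since that substitution replaces a factor by one that is $\rho$-related to it and $\rho$ is both a right congruence and contained-in the two-sided closure structure we need, transitivity of $\rho$ together with the inductive hypothesis $(0)\pi_{\Gamma_i} \subseteq \rho$ closes the argument.

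For \textbf{TC3} I would exploit the hypothesis that $(0)\pi_{\Gamma_i/\kappa_i} \subseteq \rho$ is given as part of the inductive data: since $\Gamma_{i+1} = \Gamma_i/\kappa_i$, the inclusion $(0)\pi_{\Gamma_{i+1}} \subseteq \rho$ is literally the hypothesis, and the passage to $\Gamma_{i+1}/\kappa_{i+1}$ is then controlled by again identifying newly coincident path-pairs. Here the coincidences folded in by $\kappa_{i+1}$ are exactly pairs $(\beta, \gamma)$ with a common in-edge $(\alpha, a, \beta), (\alpha, a, \gamma)$; any two words $u, u'$ labelling $(0,\beta)$- and $(0,\gamma)$-paths respectively then differ by the identification of such targets, and one shows each such identification corresponds to $\rho$-related words, so determinization does not escape $\rho$.

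\textbf{The main obstacle} I anticipate is making the informal ``every new coincident pair arises from a single local substitution'' claim precise in \textbf{TC2} and in the determinization clause of \textbf{TC3}. A path in $\Gamma_{i+1}$ may traverse the new or newly-merged edges several times, so a clean argument probably proceeds by induction on the number of times a path uses the new edge (respectively on the number of merges), peeling off one use at a time and invoking the right-congruence property and transitivity of $\rho$ at each step, with the inductive hypothesis $(0)\pi_{\Gamma_i} \subseteq \rho$ and $(0)\pi_{\Gamma_i/\kappa_i} \subseteq \rho$ supplying the base comparisons. Getting the bookkeeping of ``which factor was substituted where'' exactly right, rather than merely plausible, is where the real care is needed.
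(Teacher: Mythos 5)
Your treatment of \textbf{TC1}, \textbf{TC2}(a)/(b), and \textbf{TC3} tracks the paper's proof closely: monotonicity via identity or quotient homomorphisms together with \cref{lemma-path-equiv}, the last-edge analysis plus right-congruence for \textbf{TC1}, induction on the number of occurrences of the new edge for \textbf{TC2}(a) (the paper peels off the \emph{last} occurrence, routes through a word $w'$ labelling a $(0,\alpha)$-path, and absorbs the middle substitution $(w'ux_2, w'vx_2)$ into $R^{\#}\subseteq\rho$ --- note the prefix substitution uses that $\rho$ is a right congruence, while the middle substitution is \emph{not} licensed by $\rho$ alone but by two-sidedness of $R^{\#}$; your phrase about the ``two-sided closure structure'' gestures at this but must be made explicit), and the chain of common-in-edge pairs for \textbf{TC3}. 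But there are two genuine gaps. First, \textbf{TC2}(c) is never treated: there no edge is defined, so your \textbf{TC2}(a) substitution argument does not apply, and the pair $(\beta,\gamma)$ adjoined to $\kappa_{i+1}$ arises from a relation rather than from a common in-edge, so your \textbf{TC3} determinization argument does not cover it either. The missing observation (the paper's): choose $w$ labelling a $(0,\alpha)$-path, which exists since every node is reachable from $0$ by \cref{lemma-connected}; then $(wu, x), (wv, y)\in(0)\pi_{\Gamma_i}\subseteq\rho$ and $(wu,wv)\in R^{\#}\subseteq\rho$, and transitivity gives $(x,y)\in\rho$.

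Second, for \textbf{TC1} and \textbf{TC2}(a)/(b) you prove $(0)\pi_{\Gamma_{i+1}}\subseteq\rho$ but never $(0)\pi_{\Gamma_{i+1}/\kappa_{i+1}}\subseteq\rho$, which is half of the conclusion and is exactly what keeps the standing hypothesis available at the next step of the enumeration. This does not follow from monotonicity of the quotient path relations: $\Gamma_{i+1}/\kappa_{i+1}$ can contain coincident path-pairs present in neither $\Gamma_{i+1}$ nor $\Gamma_i/\kappa_i$, namely pairs of paths that traverse the newly defined edge and then end at distinct nodes merged by the \emph{old} $\kappa_i$ (in \textbf{TC1}, $\kappa_{i+1}=\kappa_i\cup\{(\beta,\beta)\}$ and the adjoined pair is diagonal, so only $\kappa_i$-pairs matter). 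The paper closes this with a short interpolation: if $u, v$ label paths from $0$ to $\zeta\neq\eta$ in $\Gamma_{i+1}$ with $(\zeta,\eta)\in\kappa_i$, then $\zeta$ and $\eta$ are nodes of $\Gamma_i$ reachable from $0$, so there exist words $u', v'$ with $(u,u'),(v',v)\in(0)\pi_{\Gamma_{i+1}}\subseteq\rho$ and $(u',v')\in(0)\pi_{\Gamma_i/\kappa_i}\subseteq\rho$, and transitivity finishes. An alternative fix is to rerun your edge arguments inside the quotient graph using the hypothesis $(0)\pi_{\Gamma_i/\kappa_i}\subseteq\rho$, but some such step must be supplied; without it the invariant that \cref{cor-rho-tau-again} and ultimately \cref{validity} rely on does not propagate.
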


The proof of \cref{prop-tc-invariant} is split into two parts due to
commonalities in the proofs rather than the statements. The two cases are:
\textbf{TC1}, \textbf{TC2}(a), \textbf{TC2}(b); and \textbf{TC2}(c),
\textbf{TC3}.

\begin{lemma}
  \cref{prop-tc-invariant} holds when \textbf{TC1}, \textbf{TC2}(a), or
  \textbf{TC2}(b) is applied.
\end{lemma}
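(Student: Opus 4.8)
The statement bundles four inclusions; the two ``monotonicity'' inclusions are immediate and the two ``upper bound'' inclusions carry all the content. Since \textbf{TC1}, \textbf{TC2}(a), and \textbf{TC2}(b) only \emph{add} a node and/or an edge and never delete edges, every path of $\Gamma_i$ survives in $\Gamma_{i+1}$, giving at once $(0)\pi_{\Gamma_i}\subseteq(0)\pi_{\Gamma_{i+1}}$; and since $\kappa_{i+1}$ is either $\kappa_i$ (in \textbf{TC2}) or $\kappa_i\cup\{(\beta,\beta)\}$ with $\beta$ a fresh node (in \textbf{TC1}), the quotient $\Gamma_{i+1}/\kappa_{i+1}$ is obtained from $\Gamma_i/\kappa_i$ by adjoining at most one node and the image of the new edge, so $(0)\pi_{\Gamma_i/\kappa_i}\subseteq(0)\pi_{\Gamma_{i+1}/\kappa_{i+1}}$ as well. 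The plan is therefore to reduce everything to the bounds $(0)\pi_{\Gamma_{i+1}}\subseteq\rho$ and $(0)\pi_{\Gamma_{i+1}/\kappa_{i+1}}\subseteq\rho$. Throughout I use that the ambient word graphs are connected (every node is reachable from $0$), which is maintained along any congruence enumeration by \cref{lemma-connected} and is used without reference in the surrounding text.

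For \textbf{TC1} this is purely structural. The new node $\beta$ is fresh, carries the single incident edge $(\alpha,a,\beta)$, and has no out-edges, so any $(0,\delta)$-path with $\delta\neq\beta$ avoids $\beta$ and already lies in $\Gamma_i$, while any $(0,\beta)$-path must end with $(\alpha,a,\beta)$ and hence has the form $P'\cdot(\alpha,a,\beta)$ with $P'$ a $(0,\alpha)$-path of $\Gamma_i$. Thus if $x,y$ both label $(0,\beta)$-paths then $x=x'a$, $y=y'a$ with $(x',y')\in(0)\pi_{\Gamma_i}\subseteq\rho$, whence $(x,y)\in\rho$ because $\rho$ is a right congruence; pairs with target $\delta\neq\beta$ lie in $(0)\pi_{\Gamma_i}\subseteq\rho$ already. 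The same argument applies verbatim to $\Gamma_{i+1}/\kappa_{i+1}$, since $\beta$ forms its own $\kappa_{i+1}$-class.

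The cases \textbf{TC2}(a) and \textbf{TC2}(b) are the crux and are dual, so I would prove a single lemma covering both: if $\Delta$ is a connected word graph with $(0)\pi_\Delta\subseteq\rho$, if $s$ labels a $(\lambda,\mu)$-path and $t$ a $(\lambda,\nu)$-path of $\Delta$ with $(sc,t)\in R^{\#}$ for some $c\in A$, and if $\Delta'$ is obtained by adding the edge $(\mu,c,\nu)$, then $(0)\pi_{\Delta'}\subseteq\rho$. Both \textbf{TC2} cases instantiate this: in (a) take $\mu=\gamma$, $\nu=\beta$, $\lambda=\alpha$, $c=b$, $s=v_1$, $t=u_1a$, so that $(sc,t)=(v_1b,u_1a)\in R^{\#}$ by symmetry of $R^{\#}$; in (b) take $\mu=\beta$, $\nu=\gamma$, $\lambda=\alpha$, $c=a$, $s=u_1$, $t=v_1b$, so that $(sc,t)=(u_1a,v_1b)\in R\subseteq R^{\#}$. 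I would establish the lemma by showing that every word $w$ labelling a $(0,\delta)$-path of $\Delta'$ admits a word $\bar w$ labelling a $(0,\delta)$-path of $\Delta$ with $(w,\bar w)\in\rho$; granting this, if $x,y$ both reach $\delta$ in $\Delta'$ then $(\bar x,\bar y)\in(0)\pi_\Delta\subseteq\rho$, whence $(x,y)\in\rho$.

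This reduction I would prove by induction on the number $k$ of times $w$ traverses the new edge $e=(\mu,c,\nu)$; for $k=0$ take $\bar w=w$. For $k\geq 1$, split $w=pcq$ at the \emph{first} traversal of $e$, so that $p$ labels a $(0,\mu)$-path of $\Delta$ and $q$ a $(\nu,\delta)$-path of $\Delta'$ traversing $e$ exactly $k-1$ times. Using connectivity, pick a word $z$ labelling a $(0,\lambda)$-path of $\Delta$; then $zs$ and $p$ both label $(0,\mu)$-paths of $\Delta$, so $(p,zs)\in(0)\pi_\Delta\subseteq\rho$ and hence $(pc,zsc)\in\rho$ as $\rho$ is a right congruence, while $(zsc,zt)\in R^{\#}\subseteq\rho$ because $(sc,t)\in R^{\#}$ and $R^{\#}$ is a \emph{two-sided} congruence. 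Transitivity gives $(pc,zt)\in\rho$, and right-multiplying by $q$ gives $(w,ztq)\in\rho$; since $zt$ labels a $(0,\nu)$-path of $\Delta$, the word $ztq$ labels a $(0,\delta)$-path of $\Delta'$ traversing $e$ only $k-1$ times, so the induction hypothesis supplies $\bar w$ in $\Delta$ with $(ztq,\bar w)\in\rho$, and $(w,\bar w)\in\rho$ follows. The main obstacle is precisely this step: the new edge can be traversed repeatedly, creating cycles absent from $\Gamma_i$, and rerouting each traversal back into $\Gamma_i$ relies essentially on connectivity (to reach $\lambda$ from $0$), on $\rho$ being a right congruence (to append $c$ and $q$), and on $R^{\#}$ being two-sided (to prepend $z$). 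Finally, the quotient bound $(0)\pi_{\Gamma_{i+1}/\kappa_{i+1}}\subseteq\rho$ follows by applying the same lemma to $\Delta=\Gamma_i/\kappa_i$, which is connected and satisfies $(0)\pi_{\Gamma_i/\kappa_i}\subseteq\rho$ by hypothesis, with $s,t$ the images of the original words under the quotient homomorphism $\Gamma_i\to\Gamma_i/\kappa_i$ (which preserves paths and labels) and $(sc,t)\in R^{\#}$ unchanged.
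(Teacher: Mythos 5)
Your proof is correct, and its inductive engine is the same as the paper's: reroute each traversal of the newly added edge through a word reaching the relevant node in the old graph (your $z$, the paper's $w'$), using that $\rho$ is a right congruence to append letters and suffixes and that $R^{\#}$ is two-sided to prepend the prefix, with an induction on the number of occurrences of the new edge. The packaging, however, is genuinely different in three ways. First, where the paper disposes of \textbf{TC2}(b) by symmetry with \textbf{TC2}(a), you extract a single standalone lemma (add one edge $(\mu,c,\nu)$ to a connected $\Delta$ with $(0)\pi_\Delta\subseteq\rho$, assuming $(sc,t)\in R^{\#}$) whose two instantiations cover both cases; the mildly more general hypothesis $(sc,t)\in R^{\#}$ rather than $(u,v)\in R$ costs nothing. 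Second, your induction is a single-path reduction --- every word labelling a path in $\Delta'$ is $\rho$-equivalent to a word labelling a path in $\Delta$ with the same source and target, obtained by splitting at the \emph{first} traversal --- from which the containment for pairs follows immediately; the paper instead inducts on the \emph{total} number of occurrences of the new edge across a \emph{pair} of paths, splitting at the \emph{last} occurrence so that the suffix lies in $\Gamma_i$. Third, and this is where your modularity pays off, you obtain $(0)\pi_{\Gamma_{i+1}/\kappa_{i+1}}\subseteq\rho$ by simply re-applying the same lemma to $\Delta=\Gamma_i/\kappa_i$ (legitimate, since the quotient homomorphism preserves paths and labels and $\Gamma_{i+1}/\kappa_{i+1}$ is exactly $\Gamma_i/\kappa_i$ with the one image edge adjoined when \textbf{TC2} is applied), whereas the paper runs a separate transitivity argument through $\kappa_i$-related target nodes; your direct structural treatment of the \textbf{TC1} quotient (the fresh node is a singleton $\kappa_{i+1}$-class with a unique incoming edge and no outgoing edges) likewise replaces that argument and is sound. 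One small repair: since the word graphs here need not be deterministic, a word may label several paths, so your induction parameter ``the number of times $w$ traverses $e$'' should be attached to a chosen path rather than to the word $w$; restating the reduction as ``for every $(0,\delta)$-path $P$ labelled $w$ with $k$ traversals of $e$ there exists $\bar w$ labelling a $(0,\delta)$-path of $\Delta$ with $(w,\bar w)\in\rho$'' fixes the quantification with no change to the argument.
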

\begin{proof}
  Since (a) and (b) of \textbf{TC2} are symmetric, it suffices to prove the
  proposition when \textbf{TC1} or \textbf{TC2}(a) is applied. In \textbf{TC1},
  $\alpha$ is a node in $\Gamma_{i}$ and there is no edge incident to $\alpha$
  labelled by $a$. In this case, $\Gamma_{i + 1}$ is obtained from $\Gamma_{i}$
  by adding the single node $\beta := 1 + \min \bigcup_{j \leq i} N_j$ and the
  single edge $(\alpha, a, \beta)$, and $\kappa_{i + 1} := \kappa_i \cup
  \{(\beta, \beta)\}$. In \textbf{TC2}(a), there exists $(u, v)\in R$, $b\in
  A$, and $v_1\in A ^ *$ such that $v = v_1b$, and there exist nodes $\alpha$,
  $\beta$, and $\gamma$  in $\Gamma_i$ such that $u$ and $v_1$ label $(\alpha,
  \beta)$- and $(\alpha, \gamma)$-paths in $\Gamma_i$, respectively. In this
  case, $\Gamma_{i + 1}$ is obtained from $\Gamma_{i}$ by adding the single
  edge $(\gamma, b, \beta)$ and $\kappa_{i + 1} := \kappa_i$.

  Hence if either \textbf{TC1} or \textbf{TC2}(a) is applied, then the identity
  map $N_i \to N_{i + 1}$ is a homomorphism from $\Gamma_i$ to $\Gamma_{i + 1}$
  and so the identity map $N_i/\kappa_i\to N_{i + 1} / \kappa_i$ is also a
  homomorphism from $\Gamma_i/\kappa_i$ to $\Gamma_{i+ 1}/\kappa_{i + 1}$.
  Hence, by \cref{lemma-path-equiv}, $(0)\pi_{\Gamma_i}\subseteq
  (0)\pi_{\Gamma_{i + 1}}$ and $(0)\pi_{\Gamma_i/\kappa_i}\subseteq
  (0)\pi_{\Gamma_{i + 1}/\kappa_{i + 1}}$.

  To show that $(0)\phi_{\Gamma_{i + 1}}\subseteq \rho$ we consider the cases
  when \textbf{TC1} and \textbf{TC2(a)} are applied separately.
  \medskip

  \noindent\textbf{TC1:}
  Suppose that $(u, v) \in (0)\pi_{\Gamma_{i + 1}}\setminus (0)\pi_{\Gamma_i}$
  is arbitrary. Then there are paths in $\Gamma_{i + 1}$ from $0$ to some node
  $\gamma$ labelled by $u$ and $v$ but there are no such paths in $\Gamma_i$.
  Since $\beta$ is not a node in $\Gamma_i$, $\beta$ is not the target of any
  path in $\Gamma_i$. It follows that the edge $(\alpha, a, \beta)$ must occur
  at least once in both paths. But $\beta$ is the source of no edges in
  $\Gamma_{i + 1}$, and so $(\alpha, a, \beta)$ occurs once, and it must be the
  last edge, in both paths. Hence $u = u_1a$ and $v= v_1a$ where $u_1, v_1\in A
  ^ *$ label $(0, \alpha)$-paths in $\Gamma_{i}$. Therefore $(u_1, v_1) \in
  \rho$ by assumption and since $\rho$ is a right congruence, $(u, v) = (u_1a,
  v_1a) \in \rho$ as well. Hence $(0)\pi_{\Gamma_{i + 1}} \subseteq \rho$.
  \medskip

  \noindent\textbf{TC2(a):}
  We proceed by induction on the total number $k$ of occurrences of the edge
  $(\gamma, b, \beta)$ (defined at the start of the proof) in any pair of paths
  in $\Gamma_{i + 1}$ with source $0$ and the same target node. The inductive
  hypothesis is: if $(x, y)\in (0)\pi_{\Gamma_{i + 1}}$, $X$ and $Y$ are paths
  with source $0$ labelled by $x$ and $y$, respectively, and the total number
  of occurrences of the edge $(\gamma, b, \beta)$ in $X$ and $Y$ is strictly
  less than $k$, then $(x, y) \in \rho$. The base case is when $k = 0$ and, in
  this case,  $(x, y) \in (0)\pi_{\Gamma_{i}}\subseteq \rho$, as required.

  Suppose that $k \geq 1$, that $x, y\in A ^ *$ are any words labelling $(0,
  \delta)$-paths $X$ and $Y$, respectively, for some node $\delta$, and that
  $(\gamma, b, \beta)$ occurs $k$ times in $X$ and $Y$. Without loss of
  generality we can assume that $(\gamma, b, \beta)$ occurs at least once in
  $X$. If $X$ consists of the edges $e_1, \ldots, e_r\in E_{i + 1}$, and $j\in
  \{1, \ldots, r\}$ is the largest value such that $e_j = (\gamma, b, \beta)$,
  then we may write $x = x_1bx_2$ where $x_1$ labels the $(0,\gamma)$-path
  $(e_1, \ldots, e_{j - 1})$ and $x_2$ labels the $(\beta, \delta)$-path $(e_{j
  + 1}, \ldots, e_r)$. If $w'$ labels any path from $0$ to $\alpha$ in
  $\Gamma_i$, then $w'v_1$  labels an $(0,\gamma)$-path in $\Gamma_{i}$. It
  follows that $(x_1, w'v_1) \in (0)\pi_{\Gamma_{i + 1}}$. The number of
  occurrences of $(\gamma, b, \beta)$ in $(e_1, \ldots, e_{j -1})$ is strictly
  less than $k$, and the number in the path (in $\Gamma_{i}$) labelled by
  $w'v_1$ is $0$. Hence, by the inductive hypothesis, $(x_1, w'v_1) \in \rho$
  and so $(x, w'vx_2) = (x_1bx_2, wv_1bx_2) \in \rho$.

  The word $w'$ labels a $(0, \alpha)$-path, $u$ labels a $(\alpha,
  \beta)$-path, and $x_2$ labels a $(\beta, \delta)$-path. The first two of
  these paths belong to $\Gamma_i$ by assumption, and the third does also, by
  the maximality of $j$. Hence $w'ux_2$ labels a $(0, \delta)$-path in
  $\Gamma_i$, which therefore contains no occurrences of the edge $(\gamma, b,
  \beta)$. The word $y$ also labels the  $(0, \delta)$-path $Y$ in $\Gamma_{i +
  1}$ and this path contains at most $k - 1$ occurrences of $(\gamma, b,
  \beta)$, since $X$ and $Y$ together contained $k$ occurrences, and $X$
  contained at least 1 occurrence of $(\gamma, b, \beta)$. Therefore, by
  induction, $(w'ux_2, y) \in \rho$. Finally, since $(u, v)\in R\subseteq
  \rho$, $(w'ux_2, w'vx_2)\in R ^ {\#} \subseteq \rho$ also. Hence $(x,
  w'vx_2), (w'vx_2, w'ux_2), (w'ux_2, y) \in \rho$ and so by transitivity, $(x,
  y) \in \rho$. \medskip

  We conclude the proof by showing that $(0)\pi_{\Gamma_{i + 1}/ \kappa_{i +
  1}} \subseteq \rho$ when either \textbf{TC1} or \textbf{TC2}(a) is applied.
  Suppose that $(u, v) \in (0)\pi_{\Gamma_{i + 1}/ \kappa_{i + 1}}$. If $(u, v)
  \in (0)\pi_{\Gamma_{i + 1}}$, then $(u, v)\in \rho$, as required. If $(u,
  v)\not\in (0)\pi_{\Gamma_{i + 1}}$, then $u$ and $v$ label $(0, \zeta)$- and
  $(0, \eta)$-paths in $\Gamma_{i + 1}$, respectively, for some nodes
  $\zeta\neq \eta$.  Recall that $ \kappa_{i + 1} = \kappa_{i}\cup \{(\beta,
  \beta)\}$ if \textbf{TC1} is applied, and $\kappa_{i + 1} = \kappa_i$ if
  \textbf{TC2}(a) is applied. In either case, $(\zeta, \eta)\in\kappa_{i + 1}$
  and $\zeta\neq \eta$ implies that $(\zeta, \eta) \in \kappa_i$. But $\zeta$
  and $\eta$ are nodes in $\Gamma_i$ and hence they are reachable from $0$ in
  $\Gamma_i$ by \cref{lemma-connected}. In other words, there exist $u', v'\in
  A ^ * $ labelling $(0, \zeta)$- and $(0, \eta)$-paths in $\Gamma_i$,
  respectively. Hence $(u, u'), (v', v)\in (0)\pi_{\Gamma_{i + 1}} \subseteq
  \rho$ and, by construction, $(u', v')\in (0)\pi_{\Gamma_i /
  \kappa_i}\subseteq \rho$. Thus, by transitivity, $(u, v)\in \rho$, as
  required.
\end{proof}

\begin{lemma}
  \cref{prop-tc-invariant} holds when \textbf{TC2}(c) or \textbf{TC3} is applied.
\end{lemma}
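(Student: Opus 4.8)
The plan is to treat \textbf{TC2}(c) and \textbf{TC3} uniformly by observing that in both cases the combined object $\Gamma_{i+1}/\kappa_{i+1}$ is a quotient of $\Delta := \Gamma_i/\kappa_i$. Indeed, for \textbf{TC3} we have $\Gamma_{i+1} = \Gamma_i/\kappa_i = \Delta$ by definition, so $\Gamma_{i+1}/\kappa_{i+1} = \Delta/\kappa_{i+1}$; and for \textbf{TC2}(c) we have $\Gamma_{i+1} = \Gamma_i$ and $\kappa_{i+1}$ is the least equivalence containing $\kappa_i$ and $(\beta,\gamma)$, so by the quotient-of-quotient identity recorded above, $\Gamma_{i+1}/\kappa_{i+1} = \Gamma_i/\kappa_{i+1} = (\Gamma_i/\kappa_i)/\lambda = \Delta/\lambda$, where $\lambda$ is the least equivalence on the nodes of $\Delta$ identifying the images $\bar\beta, \bar\gamma$ of $\beta, \gamma$. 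With this reduction most of the required containments are immediate. The inclusions $(0)\pi_{\Gamma_i}\subseteq (0)\pi_{\Gamma_{i+1}}$ and $(0)\pi_{\Gamma_i/\kappa_i}\subseteq(0)\pi_{\Gamma_{i+1}/\kappa_{i+1}}$ follow from \cref{lemma-path-equiv} applied to the canonical homomorphisms $\Gamma_i\to\Gamma_{i+1}$ and $\Delta\to\Delta/\lambda$ (both fixing the node $0$, since $0$ is the global minimum of the node set); for \textbf{TC2}(c) the first of these is an equality. The inclusion $(0)\pi_{\Gamma_{i+1}}\subseteq\rho$ is also immediate: for \textbf{TC2}(c) it reads $(0)\pi_{\Gamma_i}\subseteq\rho$, and for \textbf{TC3} it reads $(0)\pi_{\Gamma_i/\kappa_i}\subseteq\rho$, both of which are hypotheses.

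This leaves the single substantial inclusion $(0)\pi_{\Delta/\lambda}\subseteq\rho$, which I would isolate as a standalone lemma: \emph{if $\Delta$ is a word graph all of whose nodes are reachable from $0$ with $(0)\pi_\Delta\subseteq\rho$, and $\lambda$ is an equivalence on the nodes of $\Delta$ generated by pairs $(\beta_k,\gamma_k)$ each of which is $\rho$-justified, then $(0)\pi_{\Delta/\lambda}\subseteq\rho$}. Here I call a pair $(\beta,\gamma)$ \emph{$\rho$-justified} if every word labelling a $(0,\beta)$-path and every word labelling a $(0,\gamma)$-path in $\Delta$ are $\rho$-related. The key device is that, since $(0)\pi_\Delta\subseteq\rho$ and every node is reachable from $0$, each node $\mu$ of $\Delta$ has a well-defined $\rho$-class $[\mu]$ containing every word that labels a $(0,\mu)$-path; being $\rho$-justified then says exactly $[\beta_k]=[\gamma_k]$. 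As $\set{(\mu,\nu)}{[\mu]=[\nu]}$ is an equivalence containing all generators, it contains $\lambda$, so $[\mu]=[\nu]$ for every $(\mu,\nu)\in\lambda$; in particular $[\,\cdot\,]$ is constant on each $\lambda$-class and descends to the nodes of $\Delta/\lambda$.

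I would then prove by induction on word length the claim that any $x$ labelling a $(0,\bar\delta)$-path in $\Delta/\lambda$ lies in $[\bar\delta]$. The base case $x=\varepsilon$, $\bar\delta = 0$ holds since $\varepsilon$ labels the empty path at $0$. For the step, write $x = x'a$ where $x'$ labels a $(0,\bar\epsilon)$-path and $(\bar\epsilon,a,\bar\delta)$ is an edge of $\Delta/\lambda$; lift this edge to an edge $(\epsilon',a,\delta')$ of $\Delta$ with $\epsilon'\in\bar\epsilon$, $\delta'\in\bar\delta$, and choose (by reachability) a word $z$ labelling a $(0,\epsilon')$-path in $\Delta$. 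Then $z\in[\epsilon']=[\bar\epsilon]$ and, by induction, $x'\in[\bar\epsilon]$, so $(x',z)\in\rho$; since $\rho$ is a right congruence, $(x'a,za)\in\rho$, while $za$ labels a $(0,\delta')$-path and hence $za\in[\delta']=[\bar\delta]$, giving $x=x'a\in[\bar\delta]$. Applying this claim to both members of a pair $(x,y)\in(0)\pi_{\Delta/\lambda}$ reaching a common node $\bar\delta$ yields $x,y\in[\bar\delta]$ and thus $(x,y)\in\rho$, proving the lemma.

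Finally I would check the $\rho$-justification hypothesis in each case; this is where the relations in $R$ and the coincidence condition actually enter, and is the only place requiring case-specific work. For \textbf{TC3} the generators of $\lambda=\kappa_{i+1}$ are coincidence pairs $(\beta,\gamma)$ arising from edges $(\alpha,a,\beta),(\alpha,a,\gamma)$ of $\Delta$; choosing $w$ labelling a $(0,\alpha)$-path, the single word $wa$ labels both a $(0,\beta)$- and a $(0,\gamma)$-path, so $[\beta]=[\gamma]$. For \textbf{TC2}(c) the generator is $(\bar\beta,\bar\gamma)$, and from the path data $w$ (to $\alpha$), $u$ (from $\alpha$ to $\beta$), $v$ (from $\alpha$ to $\gamma$) with $(u,v)\in R$ we get $wu$ labelling a $(0,\bar\beta)$-path and $wv$ a $(0,\bar\gamma)$-path with $(wu,wv)\in R^{\#}\subseteq\rho$, whence $[\bar\beta]=[\bar\gamma]$. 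I expect the main obstacle to be the inductive claim in the lemma: a path in the quotient $\Delta/\lambda$ does \emph{not} lift to a genuine path in $\Delta$, only to a sequence of edges broken at the merged nodes, and the force of the argument is that the right-congruence property of $\rho$ lets one re-anchor the computation at a fresh representative after each such break without leaving the $\rho$-class.
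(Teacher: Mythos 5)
Your proposal is correct, but it takes a genuinely different route from the paper. The paper treats \textbf{TC2}(c) and \textbf{TC3} as separate cases and, in each, argues directly that a pair $(x,y)\in(0)\pi_{\Gamma_{i+1}/\kappa_{i+1}}$ can be traced back to paths in $\Gamma_{i+1}$ ending at $\kappa_{i+1}$-related nodes, which are then compared using a word $w$ labelling a path to the relevant node $\alpha$ and the fact that $(wu,wv)\in R^{\#}\subseteq\rho$ (for \textbf{TC2}(c)) or chains of coincidence pairs $(w_ja_j,x_j)$ (for \textbf{TC3}). You instead unify both cases: via the quotient-of-quotient identity you reduce everything to a single standalone lemma about $\Delta/\lambda$ where $\lambda$ is generated by pairs of nodes whose $\rho$-classes $[\,\cdot\,]$ agree, and you prove that lemma by induction on word length, re-anchoring at a fresh representative after each break in a lifted path using the right-congruence property. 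What your approach buys is twofold: a reusable lemma whose case-specific content (checking $\rho$-justification of the generating pairs) is a few lines, and an explicit treatment of the fact that paths in a quotient do \emph{not} lift to paths in the original graph --- a point the paper's proof glosses when it asserts that $x$ and $y$ label paths in $\Gamma_{i+1}$ itself; your induction is exactly the argument needed to make that step rigorous. What the paper's approach buys is brevity and closer proximity to the combinatorics of the two steps, at the cost of the lifting subtlety being implicit; the paper's \textbf{TC3} case does contain the germ of your induction in its chain $x_1,\ldots,x_n$ of intermediate words, but only for a single application of the quotient rather than as a general principle.
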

\begin{proof}
  If \textbf{TC2}(c) is applied, then there exist a node $\alpha$ in $\Gamma_i$
  and $(u, v) \in R$ such that $u$ and $v$ label $(\alpha, \beta)$- and
  $(\alpha, \gamma)$-paths in $\Gamma_i$ for some distinct nodes $\beta$ and
  $\gamma$ in $\Gamma_i$. In this case, $\Gamma_{i + 1} = \Gamma_i$ and
  $\kappa_{i + 1}$ is the least equivalence relation containing $\kappa_i$ and
  $(\beta, \gamma)$. If \textbf{TC3} is applied, then $\Gamma_{i + 1} =
  \Gamma_i / \kappa_i$  and $\kappa_{i + 1}$ is the least equivalence on $N_{i
  + 1}$ containing every $(\delta, \zeta)$, $\delta \neq \zeta$, for which
  there exist $\eta \in N_{i + 1}$ and $a\in A$ such that $(\eta, a, \delta),
  (\eta, a, \zeta)\in E_{i + 1}$.

  In either case, there is a homomorphism from $\Gamma_{i + 1}$ to $\Gamma_{i +
  1} / \kappa_{i + 1}$ and so \cref{lemma-path-equiv} implies that
  $(0)\pi_{\Gamma_{i + 1}}\subseteq (0)\pi_{\Gamma_{i + 1}/ \kappa_{i + 1}}$.
  If \textbf{TC2}(c) is applied, then the identity map is a homomorphism from
  $\Gamma_i$ to $\Gamma_{i + 1}$. If \textbf{TC3} is applied, then $\Gamma_{i +
  1} = \Gamma_{i}/ \kappa_i$ is a homomorphic image of $\Gamma_i$ also. Hence,
  in either case, $(0)\pi_{\Gamma_i}\subseteq (0)\pi_{\Gamma_{i + 1}}$ by
  \cref{lemma-path-equiv}.

  Hence, in both cases, it suffices to show that $(0)\pi_{\Gamma_{i}/
  \kappa_{i}} \subseteq (0)\pi_{\Gamma_{i + 1}/ \kappa_{i + 1}}\subseteq \rho$.
  We consider the cases when \textbf{TC2}(c) and \textbf{TC3} are applied
  separately.
  \medskip

  \noindent\textbf{TC2(c):}
  If $\beta$ and $\gamma$ are the nodes defined above, and $\sigma$ is the
  least equivalence relation on the set of nodes in $\Gamma_i/\kappa_i$
  containing $\beta/\kappa_i$ and $\gamma/ \kappa_i$, then
  $(\Gamma_i/\kappa_i)/\sigma = \Gamma_{i} / \kappa_{i + 1} = \Gamma_{i + 1}/
  \kappa_{i + 1}$. Hence $\Gamma_{i + 1}/ \kappa_{i + 1}$ is a homomorphic
  image of $\Gamma_i/\kappa_i$ and so  $(0)\pi_{\Gamma_i/ \kappa_i} \subseteq
  (0)\pi_{\Gamma_{i + 1}/ \kappa_{i + 1}}$.

  Suppose that $(x, y) \in (0)\pi_{\Gamma_{i + 1}/ \kappa_{i + 1}}$. Then $x$
  and $y$ both label $(0, \delta)$-paths in $\Gamma_{i + 1}/ \kappa_{i + 1}$
  for some node $\delta$ in $\Gamma_{i + 1}/ \kappa_{i + 1}$.  It follows that
  there are nodes $\zeta$ and $\eta$ in $\Gamma_{i + 1}$ such that $(\delta,
  \zeta), (\delta, \eta)\in \kappa_{i + 1}$ and $x$ and $y$ label $(0, \zeta)$-
  and $(0, \eta)$-paths in $\Gamma_{i + 1} = \Gamma_i$, respectively.

  If $(\zeta, \eta) \in \kappa_i$, then $(u, v)\in (0)\pi_{\Gamma_i /
  \kappa_i}$ and so $(u, v) \in \rho$ by assumption. If $(\zeta, \eta)\not\in
  \kappa_i$, then without loss of generality we may assume that $(\zeta, \eta)
  = (\beta, \gamma)$. In this case, if $w\in A ^ *$ labels a $(0, \alpha)$ path
  in $\Gamma_i$, then $wu$ and $x$ both label $(0, \beta) = (0, \zeta)$-paths
  in $\Gamma_i$, and so $(wu, x)\in (0)\pi_{\Gamma_i}\subseteq \rho$.
  Similarly, $wv$ and $y$ both label $(0, \gamma)$-paths in $\Gamma_i$, and so
  $(wv, y) \in \rho$, also. Finally, $(u, v) \in R$ and so $(wu, wv)\in R ^
  {\#} \subseteq \rho$, and so $(x, y) \in \rho$ also.
  \medskip

  \noindent\textbf{TC3:}
  Clearly, $\Gamma_{i + 1}/ \kappa_{i + 1}$ is a homomorphic image of
  $\Gamma_{i + 1}$ and $\Gamma_{i + 1}$ is defined to be $\Gamma_i/ \kappa_i$
  in \textbf{TC3}. Hence $(0)\pi_{\Gamma_i/ \kappa_i} \subseteq
  (0)\pi_{\Gamma_{i + 1}/ \kappa_{i + 1}}$ by \cref{lemma-path-equiv}. It
  remains to show that $(0)\pi_{\Gamma_{i + 1}/ \kappa_{i + 1}}\subseteq \rho$.

  Suppose that $(x, y) \in (0)\pi_{\Gamma_{i + 1}/ \kappa_{i + 1}}$. Then there
  exist nodes $\zeta$ and $\eta$ in $\Gamma_{i + 1}$ such that $(\zeta,
  \eta)\in \kappa_{i + 1}$ where $x$ labels a $(0, \zeta)$-path and $y$ labels
  a $(0, \eta)$-paths in $\Gamma_{i + 1}$. In \textbf{TC3}, $\kappa_{i + 1}$ is
  defined to be the least equivalence containing $(\beta, \gamma) \in N_{i + 1}
  \times N_{i + 1}$ such that there are edges $(\alpha, a, \beta)$ and
  $(\alpha, a, \gamma)$ in $\Gamma_{i + 1}$. Hence $(\eta, \zeta) \in \kappa_{j
  + 1}$ implies that there exist $\alpha_1, \ldots, \alpha_{n - 1}, \beta_1 :=
  \zeta, \beta_1, \ldots, \beta_n := \eta \in N_{j + 1}$ and $a_1, \ldots, a_{n
  - 1}\in A$ where $(\alpha_j, a_j, \beta_j), (\alpha_j, a_j, \beta_{j + 1})
  \in E_{j + 1}$ and $\beta_j \neq \beta_{j + 1}$ for every $j$. Suppose that
  $w_j\in A ^ *$ is any word labelling a $(0, \alpha_j)$-path for $j = 1,
  \ldots, n - 1$, that $x_1 = x$, that $x_j$ labels a $(0, \beta_j)$-path for
  $j = 2, \ldots, n - 1$, and that $x_n = y$.
  Then $w_ja_j$ and $x_j$ both label $(0, \beta_j)$-paths in $\Gamma_{i + 1}$
  for every $j$. Similarly, $w_ja_j$ and $x_{j + 1}$ both label $(0, \beta_{j +
  1})$-paths in $\Gamma_{i + 1}$ for every $j$. Hence  $(w_ja_j, x_j), (w_ia_j,
  x_{j + 1}) \in (0)\pi_{\Gamma_{i + 1}} \subseteq \rho$ for every $j$.
  Therefore $(x_j, x_{j + 1}) \in \rho$ for every $j$, and so by transitivity
  $(x_1, x_n) = (x, y)\in \rho$, as required.
\end{proof}


\subsection{Completeness, determinism, and compatibility}\label{subsection-validity-termination}

In this section, we will prove that if at some step $i$ in a congruence
enumeration the word graph $\Gamma_{i}$ is complete, then that enumeration
terminates, and that every congruence enumeration is eventually compatible with
$\rho$.


We say that a congruence enumeration \defn{stabilises} if for every $i\in \N$
and every node $\alpha$ of $\Gamma_{i}$ there exists $K\in \N$ such that for
all $j\geq K$ either $\alpha$ is not a node in $\Gamma_{j}$ or if $(\alpha, a,
\beta) \in E_j$, then $(\alpha, a, \beta) \in E_{j + 1}$ for any $a\in A$.


\begin{lemma}\label{lemma-stabilise}
  Every congruence enumeration stabilises.
\end{lemma}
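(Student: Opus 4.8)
The lemma asserts that every congruence enumeration stabilises, meaning: for every $i$ and every node $\alpha$ of $\Gamma_i$, there is a threshold $K$ after which $\alpha$ is either gone (merged away via **TC3**) or else retains all of its out-edges forever.

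Let me understand what each operation does to edges:
- **TC1** adds a new node and a new edge, changes nothing existing.
- **TC2(a), TC2(b)** add a single new edge, change nothing existing.
- **TC2(c)** adds a pair to $\kappa$, doesn't touch the graph.
- **TC3** takes the quotient $\Gamma_{i+1} = \Gamma_i/\kappa_i$. This is the ONLY operation that can remove nodes or change edges.

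So the only way a node $\alpha$ can lose its out-edges is through **TC3**, which either merges $\alpha$ into a smaller-numbered node (so $\alpha$ disappears) or keeps $\alpha$ (as $\min \alpha/\kappa$) but redirects edges.

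**Key observations about TC3.** In the quotient $\Gamma/\kappa$, nodes are $\{\min \alpha/\kappa\}$. The nodes are non-negative integers. When **TC3** is applied, a node survives iff it is the minimum of its $\kappa$-class. So if node $\alpha$ survives a **TC3**, its value (as an integer) is unchanged, but it remains the minimum. Crucially: **once a node disappears (is merged into something smaller), it never reappears** — because new nodes from **TC1** get values strictly larger than all previously-seen nodes ($\beta := 1 + \max \bigcup_{j\le i} N_j$).

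**Approach.** The essential finiteness fact is: each node can only be merged "downward" finitely often, because node values are non-negative integers and merging only ever sends a node to a strictly smaller survivor.

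Here's the plan.

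**Step 1: The set of node-values is well-behaved.** Observe that for any fixed $i$ and node $\alpha \in N_i$, the integer value $\alpha$ is bounded. Consider what can happen to $\alpha$ across subsequent steps. In **TC1, TC2**, $\alpha$ persists unchanged with all its edges. Under **TC3**, $\alpha$ either (i) survives as $\min \alpha/\kappa = \alpha$ (if $\alpha$ is the class minimum), or (ii) is absorbed, i.e., $\min \alpha/\kappa = \alpha' < \alpha$, and $\alpha$ is no longer a node.

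**Step 2: A node, once removed, stays removed.** I would argue: if $\alpha \notin N_j$ for some $j$, then $\alpha \notin N_{j'}$ for all $j' \ge j$. Indeed, removal only happens via **TC3** merging $\alpha$ into a smaller survivor. Any node created afterward by **TC1** has value strictly greater than everything seen so far, so the integer $\alpha$ is never reintroduced as a node value. Under **TC2, TC3**, no new node values are introduced (TC3 only takes a subset of existing minima). Hence removal is permanent. This gives us the "$\alpha$ is no longer a node" branch of stabilisation once it ever happens.

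**Step 3: Descent argument for surviving nodes.** Now fix $i$ and a node $\alpha \in N_i$. I want to show $\alpha$'s edges eventually freeze (or $\alpha$ vanishes). Consider the sequence of graphs from step $i$ onward, and focus on the behaviour of the integer $\alpha$. The only operation that can alter an existing edge with source $\alpha$ is **TC3**. In **TC3**, if $\alpha$ survives, then $\alpha$ is a class-minimum, so its source-value is unchanged, but each edge $(\alpha, a, \beta)$ becomes $(\alpha, a, \min \beta/\kappa)$ — the target may drop to a smaller value. So under **TC3** applied to a surviving $\alpha$, each out-edge either keeps its target or sends the target to a strictly smaller non-negative integer.

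**Step 4: Bounding the number of changes.** Here is the main obstacle and the crux of the argument. I need that across the whole (possibly infinite) enumeration, node $\alpha$'s out-edges can change only finitely often. A target value is a non-negative integer; under **TC3** it can only strictly decrease (when it decreases at all). A strictly decreasing sequence of non-negative integers has finite length. But there is a subtlety: a new out-edge labelled by some $a$ might be added later by **TC2(a)/(b)** if $\alpha$ had no $a$-edge before — and after merging, $\alpha$ could again lack some edge and later gain one. I would resolve this by fixing not just $\alpha$ but also a letter $a \in A$ (the alphabet is finite) and tracking the out-edge of $\alpha$ labelled $a$. For each fixed $(\alpha, a)$: the target, whenever it exists, is a non-negative integer; adding an edge (TC1/TC2) only happens when there was none; **TC3** can only decrease an existing target. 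The potential function I would use is the target value (with "no edge" treated as $+\infty$, i.e., larger than any integer). This potential is non-increasing in a suitable sense and bounded below by $0$, but I must handle the "no-edge $\to$ edge" transition carefully, since that is a decrease from $+\infty$ to a finite value and can only happen once between merges.

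The cleanest formulation: I claim the target value of the $a$-edge at $\alpha$ (when the edge exists) never strictly increases across any single step. **TC1/TC2** never change an existing edge. **TC3** sends target $\beta \mapsto \min \beta/\kappa \le \beta$. So once an $a$-edge at $\alpha$ exists, its target is a non-increasing sequence of non-negative integers, hence eventually constant. The only remaining issue is whether the $a$-edge can be destroyed and recreated; but **TC3** quotients the graph — if $\alpha$ survives and had an $a$-edge, the quotient still has an $a$-edge out of $\alpha$ (the edge is pushed forward, not deleted), so once an out-edge labelled $a$ exists at a surviving $\alpha$, it exists forever. Therefore the pair (existence of $a$-edge, target value) can change only finitely often: existence flips from false to true at most once (it can never flip back while $\alpha$ survives), and thereafter the target is non-increasing in $\N$, hence eventually constant.

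**Step 5: Assemble.** Fix $i$ and $\alpha \in N_i$. If $\alpha$ is eventually removed, we are in the first case of stabilisation (permanent by Step 2), and we take $K$ to be that removal index. Otherwise $\alpha$ survives forever; for each of the finitely many letters $a \in A$, by Step 4 there is a threshold $K_a$ past which the $a$-edge status and target at $\alpha$ are constant. Let $K = \max_a K_a$. Then for all $j \ge K$, every edge $(\alpha, a, \beta) \in E_j$ satisfies $(\alpha, a, \beta) \in E_{j+1}$, which is exactly the stabilisation condition.

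**Expected main obstacle.** The delicate point is Step 4: ruling out that an out-edge at a surviving node is destroyed and later recreated with a larger target, which would break the monotone-descent bound. The resolution is the observation that **TC3** is a graph quotient that pushes edges forward (homomorphism, as noted in the text before \cref{lemma-path-equiv}), so a surviving node never loses an out-edge it already had; combined with the fact that node values of the targets can only decrease under merging and are bounded below by $0$, the finiteness of changes follows. The finiteness of the alphabet $A$ is what lets us reduce to a per-letter argument, and the permanence of node removal (Step 2) is what makes the "$\alpha$ is no longer a node" branch well-defined as a single threshold.
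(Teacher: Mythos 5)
Your proof is correct and follows essentially the same route as the paper's: permanence of node removal (since \textbf{TC1} always introduces nodes larger than any previously seen) combined with, for each fixed pair $(\alpha, a)$, the observation that the target of the $a$-edge at a surviving node can only be replaced under \textbf{TC3} by a smaller non-negative integer, so only finitely many changes occur. The only difference is one of detail: you make explicit the point (left implicit in the paper) that a surviving node never loses an existing out-edge under the quotient, which is a worthwhile clarification but not a different argument.
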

\begin{proof}
  We will prove a slightly stronger statement, that any sequence of
  \textbf{TC1}, \textbf{TC2}, and \textbf{TC3} stabilises.

  Suppose that $i\in \N$, $\alpha$ is an arbitrary node in $\Gamma_{i}$, and
  $a\in A$ is arbitrary.  Either there exists $K > i$ such that $\alpha$ is no
  longer a node in $\Gamma_{K}$, or $\alpha$ is a node in $\Gamma_{j}$ for all
  $j \geq i$. If $\alpha$ is no longer a node in $\Gamma_{K}$, then, from the
  definitions of \textbf{TC1}, \textbf{TC2}, and \textbf{TC3}, $\alpha$ is not
  a node in $\Gamma_{k}$ for any $k\geq K$.

  Suppose that $\alpha$ is a node in $\Gamma_{j}$ for all $j \geq i$ and that
  $(\alpha, a, \beta_1) \in E_j$ for some $\beta_1\in A ^ *$. If $\beta_1\in
  N_j$ is replaced by $\beta_2\in N_k$ at some step $k\geq j$, then step $k$ is
  an application of \textbf{TC3}. In particular, $0 \leq \beta_2 \leq \beta_1$.
  This process can be repeated only finitely many times because there are only
  finitely many natural numbers less than $\beta_1$.
\end{proof}


If $\Gamma_{i}$ is complete for some $i$, then \textbf{TC1} cannot be applied
again at any step after $i$. In this case, \textbf{TC2} and \textbf{TC3} make
$\kappa$ coarser and hence reduce the number of nodes in $\Gamma_{i}$. Since
the number of nodes in $\Gamma_i$ is finite, it follows that the enumeration
must terminate at some point. We record this in the following corollary.

\begin{cor}\label{cor-terminates-if-complete}
  If $\Gamma_{i}$ is complete at some step $i$ of a congruence
  enumeration, then the enumeration terminates at some step $j\geq i$.
\end{cor}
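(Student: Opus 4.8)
The plan is to turn the two-line argument preceding the statement into a strictly decreasing measure on the states $(\Gamma_j,\kappa_j)$ that occur once $\Gamma_i$ is complete, and then to check that the resulting stable state meets the termination criterion recalled after \cref{de-2-sided-enumeration-process}. The first observation is that completeness persists from step $i$ onwards: \textbf{TC2}(c) leaves the underlying word graph unchanged, and the quotient of a complete word graph by any equivalence is again complete (every node of the quotient lifts to a node of $\Gamma$, which has the required out-edges), so each $\Gamma_j$ with $j\geq i$ is complete. Since \textbf{TC1}, \textbf{TC2}(a), and \textbf{TC2}(b) all require a node that is \emph{not} the source of an edge with some given label, none of them can effect a change after step $i$; hence for $j\geq i$ only \textbf{TC2}(c) and \textbf{TC3} can alter the state, and in particular the node set never grows.

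Next I would introduce the lexicographic measure $\mu_j=(\,|N_j|,\ |(N_j\times N_j)\setminus\kappa_j|\,)$, valued in the well-ordered set $\N\times\N$, and claim that $\mu_j$ strictly decreases at every step $j\geq i$ that changes $(\Gamma_j,\kappa_j)$. This is a brief case analysis: a change by \textbf{TC2}(c) fixes $N_j$ but strictly enlarges $\kappa_j$, dropping the second coordinate; a change by \textbf{TC3} with $\kappa_j\neq\Delta_{N_j}$ merges at least two nodes, dropping $|N_j|$; and a change by \textbf{TC3} with $\kappa_j=\Delta_{N_j}$ fixes $\Gamma_j$ but replaces $\Delta_{N_j}$ by the non-trivial coincidence relation, again dropping the second coordinate. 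As $\mu_j$ is non-increasing and bounded below, only finitely many steps can change the state, so there is $j_0\geq i$ with $(\Gamma_j,\kappa_j)=(\Gamma,\kappa)$ for all $j\geq j_0$.

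It then remains to verify that this stable $(\Gamma,\kappa)$ is a fixed point, i.e.\ that $\Gamma$ is complete, deterministic, and compatible with $R\cup S$. Completeness is in hand; $\kappa=\Delta_N$ follows from the fairness condition (e) of \cref{de-2-sided-enumeration-process}, which forbids $\kappa$ from staying non-trivial forever; compatibility with $R$ follows from condition (d) applied to each (now stable) node and each $(u,v)\in R$, and compatibility with $S$ at $0$ is arranged by the initial steps (a), (b) and preserved because every step carries paths to paths. The delicate point, and the main obstacle, is determinism: here I would use the invariant that every coincidence is recorded in $\kappa$. The only step that can produce two distinct edges sharing a source and label is the quotient in \textbf{TC3}, which simultaneously defines $\kappa_{j+1}$ to contain all such coincidences, while the remaining steps either add an edge only where the source had none or add no edge at all; hence at every step coincident targets are $\kappa_j$-related, and since $\kappa=\Delta_N$ on the stable state, $\Gamma$ is deterministic. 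With the three properties established, the criterion recalled after \cref{de-2-sided-enumeration-process} gives termination at step $j_0$.
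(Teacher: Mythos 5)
Your proposal is correct and takes essentially the same route as the paper, whose proof is just the paragraph preceding the corollary: completeness blocks \textbf{TC1} and \textbf{TC2}(a),(b), so the remaining steps can only coarsen $\kappa$ or merge nodes, and finiteness of the node set forces the state to stabilise. Your extra work making this rigorous --- the lexicographic measure $(|N_j|,\,|(N_j\times N_j)\setminus\kappa_j|)$, and the verification via conditions (c)--(e) of \cref{de-2-sided-enumeration-process} that the stable state is a genuine fixed point (with determinism obtained from the invariant that coincident targets are always $\kappa$-related, which, like the paper's own arguments, implicitly assumes the input word graph is deterministic) --- is sound and simply spells out what the paper leaves implicit.
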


The next lemma shows that, roughly speaking, if the word graph at some step of
a congruence enumeration is non-deterministic, then at some later step it is
deterministic.

\begin{lemma}\label{lemma-determinism}
  If $w\in A ^ *$ labels a path in $\Gamma_i$ starting at $0$ for some $i\in
  \N$, then there exists $j\geq i$ such that $w$ labels a unique path in
  $\Gamma_j$ starting at $0$.
\end{lemma}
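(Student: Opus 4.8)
The plan is to reduce the statement to the eventual determinism of the word graph and then extract such a deterministic stage from condition (e) of \cref{de-2-sided-enumeration-process}. First I would record that the \emph{existence} of a path from $0$ labelled by $w$ is never destroyed: as observed in the text preceding \cref{de-2-sided-enumeration-process}, applying \textbf{TC1} or \textbf{TC2} leaves every existing path intact, while applying \textbf{TC3} replaces $\Gamma_i$ by a homomorphic image, and homomorphisms preserve labelled paths (cf.\ \cref{lemma-path-equiv}). Hence $w$ labels a path from $0$ in $\Gamma_j$ for every $j \geq i$, and the whole content of the lemma is to find a stage $j$ at which this path is \emph{unique}. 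Since a deterministic word graph has at most one edge with a given source and label, an easy induction on word length shows that in a deterministic $\Gamma_j$ any word labels at most one path from a fixed node; thus it suffices to produce $j \geq i$ with $\Gamma_j$ deterministic.

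The crux is the invariant that $\kappa_j = \Delta_{N_j}$ forces $\Gamma_j$ to be deterministic, which I would prove by tracking how $\kappa$ can change. Steps \textbf{TC1}, \textbf{TC2}(a), and \textbf{TC2}(b) leave $\kappa$ unchanged apart from adjoining the reflexive pair for a new node, and they add an edge only at a source that previously had no edge with that label, so they preserve determinism; by contrast \textbf{TC2}(c) always strictly enlarges $\kappa$ by a non-diagonal pair $(\beta,\gamma)$ with $\beta \neq \gamma$. Consequently the only step that can turn a non-trivial $\kappa_{j-1}$ into $\kappa_j = \Delta_{N_j}$ is \textbf{TC3}; and immediately after such a \textbf{TC3} we have $\Gamma_j = \Gamma_{j-1}/\kappa_{j-1}$ with $\kappa_j$, the equivalence generated by the coincidences of $\Gamma_j$, equal to $\Delta_{N_j}$, which is exactly the assertion that $\Gamma_j$ has no coincidences, i.e.\ is deterministic. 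Starting from the deterministic input $\Gamma_1$ (the trivial word graph, or a Cayley graph, in the intended applications) and using that determinism is preserved so long as $\kappa$ stays equal to $\Delta$ (note that \textbf{TC3} applied when $\kappa = \Delta$ is a no-op), this yields the invariant.

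With the invariant in hand the conclusion is immediate. If $\kappa_i = \Delta_{N_i}$, then $\Gamma_i$ is already deterministic and I take $j = i$. Otherwise condition (e) of \cref{de-2-sided-enumeration-process} supplies some $j \geq i$ with $\kappa_j = \Delta_{N_j}$, whence $\Gamma_j$ is deterministic by the invariant. In either case $w$ labels a path from $0$ in $\Gamma_j$ by the persistence noted above, and determinism makes this path unique.

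I expect the main obstacle to be the invariant of the second paragraph, specifically the bookkeeping that the transition $\kappa \neq \Delta \rightsquigarrow \kappa = \Delta$ can only be effected by \textbf{TC3} and that determinism is not silently broken by the intervening \textbf{TC1}/\textbf{TC2} steps; once this is pinned down, condition (e) does the rest. A secondary point to be careful about is the base case of the invariant, which relies on the input word graph $\Gamma_1$ being deterministic.
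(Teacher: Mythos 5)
Your proof is correct, but it takes a genuinely different route from the paper's. The paper argues by induction on the length of $w$: writing $w = w_1a$, it uses \cref{lemma-stabilise} to fix the target $\alpha$ of the (inductively unique) path labelled $w_1$, observes that among parallel edges $(\alpha, a, \beta_1), \ldots, (\alpha, a, \beta_r)$ at most one can have been created by \textbf{TC1} or \textbf{TC2} --- so that for $r > 1$ the targets are $\kappa_i$-related --- and then invokes condition (e) of \cref{de-2-sided-enumeration-process} to reach a step where only one edge at $\alpha$ labelled $a$ survives. You instead prove a global invariant, that $\kappa_j = \Delta_{N_j}$ forces $\Gamma_j$ to be deterministic (only \textbf{TC3} can reset a non-trivial $\kappa$, and immediately after \textbf{TC3} triviality of $\kappa_j$ is by construction the absence of coincidences), and then let condition (e) supply a globally deterministic stage at which every word labels at most one path from $0$, path-existence persisting by the remarks preceding \cref{de-2-sided-enumeration-process}. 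Your version buys three things: it dispenses with the induction on $|w|$ and with \cref{lemma-stabilise}; it settles a point the paper's induction leaves implicit, namely that uniqueness of the $w_1$-path, obtained at one step, still holds at the later step $j$ with $\kappa_j = \Delta_{N_j}$ --- a priori \textbf{TC3} quotients can create new paths, and it is precisely your global-determinism observation that closes this; and the invariant is reusable, e.g.\ it re-derives the remark that a terminated enumeration outputs a deterministic word graph. The base case you flag is a real subtlety, but one shared with the paper: \cref{de-2-sided-enumeration-process} does not literally require $\Gamma_1$ to be deterministic, and without that hypothesis the lemma can genuinely fail --- take $A = \{a\}$, $R = S = \varnothing$, $\Gamma_1$ with edges $(0, a, 1)$ and $(0, a, 2)$, and a schedule that only ever applies \textbf{TC1}: conditions (c)--(e) hold, $\kappa_i = \Delta_{N_i}$ throughout, yet $a$ labels two paths from $0$ forever. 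The paper's step ``if $r > 1$ then $(\beta_1, \beta_2), \ldots, (\beta_{r-1}, \beta_r) \in \kappa_i$'' tacitly makes the same assumption, since it fails for parallel edges already present in $\Gamma_1$; for the intended inputs (the trivial word graph or a right Cayley graph) determinism holds, so this is a hypothesis both arguments silently import rather than a defect peculiar to yours.
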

\begin{proof}
  We proceed by induction on the length of the word $w$. If $w = \varepsilon$,
  then $w$ labels a unique path in every $\Gamma_i$ starting (and ending) at
  $0$.

  Suppose that every word of length at most $n - 1$ labelling a path starting
  at $0$ in $\Gamma_i$ labels a unique path, and let $w\in A ^ *$ be any word
  of length $n\geq 1$, labelling a path in $\Gamma_i$ starting at $0$. If $w =
  w_1a$ for some $w_1\in A ^ *$ and $a \in A$, then $w_1$ labels a unique path
  in $\Gamma_i$ starting at $0$ by induction. Suppose that $\alpha$ is the
  target of the unique path labelled $w_1$. By \cref{lemma-stabilise}, we may
  assume without loss of generality that $\alpha$ is a node in $\Gamma_j$ for
  every $j\geq i$. If there exist edges $(\alpha, a, \beta_1), \ldots, (\alpha,
  a, \beta_r)$ in $\Gamma_i$, then at most one of these edges was created by an
  application of \textbf{TC1} or \textbf{TC2}. So, if $r > 1$, then $(\beta_1,
  \beta_2), \ldots, (\beta_{r - 1}, \beta_r) \in \kappa_i$.

  By part (e) of the definition of a congruence enumeration
  (\cref{de-2-sided-enumeration-process}), there exists $j > i$ such that
  $\kappa_j = \Delta_{N_j}$ and so at step $j$, there is only one edge incident
  to $\alpha$ labelled $a$.
\end{proof}

Next we show that every congruence enumeration is eventually compatible with
$\rho$.

\begin{lemma}\label{lemma-injective}
  If $(u, v) \in R ^ {\#}$, then there exists $K\in \N$ such that  $(u, v) \in
  (0)\pi_{\Gamma_i}$ for all $i \geq K$.
\end{lemma}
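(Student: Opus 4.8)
The plan is to lean on three facts already available: membership in the path relation at $0$ only ever grows, since \cref{prop-tc-invariant} gives $(0)\pi_{\Gamma_i}\subseteq(0)\pi_{\Gamma_{i+1}}$, so it suffices to exhibit a single step $K$ with $(u,v)\in(0)\pi_{\Gamma_K}$; relations in $R$ are eventually respected at every node, by condition (d) of \cref{de-2-sided-enumeration-process}; and the enumeration is eventually deterministic, via \cref{lemma-determinism}. First I would record an auxiliary claim: for every $x\in A^*$ there is a step after which $x$ labels a path from $0$. This follows by induction on $|x|$ from eventual completeness (condition (c)), \cref{lemma-stabilise}, and the fact that all nodes stay reachable from $0$ (\cref{lemma-connected}): once the persistent node reached by a prefix $x'$ is fixed, condition (c) forces the missing out-edge to appear. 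Run from any persistent node reachable from $0$, the same induction shows any word labels a path from that node eventually.

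Next I would reduce to a single elementary step. Since $(u,v)\in R^{\#}$ there is a two-sided elementary sequence $u=s_0,s_1,\dots,s_m=v$ with $s_k=p_kw_kq_k$, $s_{k+1}=p_kw'_kq_k$ and $(w_k,w'_k)\in R$ (the reversed orientation is handled by symmetry of $(0)\pi_{\Gamma_i}$), so it suffices to prove: for a fixed $(w,w')\in R$ and fixed $p,q\in A^*$ there is a step at which $(pwq,pw'q)\in(0)\pi_{\Gamma_i}$. This is the heart of the argument, and I would prove it by pure concatenation of paths rather than by uniqueness. By the auxiliary claim $p$ eventually labels a path $0\to\alpha$, and by \cref{lemma-stabilise} the target stabilises to a persistent node $\alpha$. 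Applying condition (d) at $\alpha$ to $(w,w')\in R$ gives a later step at which $(w,w')\in(\alpha)\pi_{\Gamma_j}$ — the alternative ``$\alpha$ is no longer a node'' being excluded by persistence — so $w$ and $w'$ both label paths from $\alpha$ to a common node $\gamma$, which I again take persistent. Since $\gamma$ is reachable from $0$ via $pw$, the auxiliary claim provides a $q$-labelled path $\gamma\to\delta$. Concatenating yields $0\xrightarrow{p}\alpha\xrightarrow{w}\gamma\xrightarrow{q}\delta$ and $0\xrightarrow{p}\alpha\xrightarrow{w'}\gamma\xrightarrow{q}\delta$, so $pwq$ and $pw'q$ both reach $\delta$ and $(pwq,pw'q)\in(0)\pi_{\Gamma_j}$; monotonicity then propagates this to all later steps.

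With the single-step claim, each consecutive pair $(s_k,s_{k+1})$ lies in $(0)\pi_{\Gamma_i}$ from some step on, so by monotonicity there is one step $i^*$ at which all $m$ hold at once. To collapse the chain into $(s_0,s_m)=(u,v)$ I would pass to a deterministic step: by \cref{lemma-determinism} (equivalently, condition (e) together with the invariant that $\kappa_i$ records exactly the nondeterminism of $\Gamma_i$, whence $\kappa_i=\Delta_{N_i}$ forces $\Gamma_i$ deterministic) there are arbitrarily large deterministic steps, and at such a step each $s_k$ labels a unique path from $0$. The common node witnessing $(s_k,s_{k+1})\in(0)\pi$ is then the unique target of both $s_k$ and $s_{k+1}$, so these targets agree along the whole chain, $u=s_0$ and $v=s_m$ reach the same node, and a last appeal to monotonicity gives $(u,v)\in(0)\pi_{\Gamma_i}$ for all sufficiently large $i$. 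The main obstacle is the coordination inside the single-step claim: arranging the $p$-path, the compatibility at $\alpha$, and the $q$-path out of $\gamma$ to be simultaneously available at one step, which forces one to combine \cref{lemma-stabilise} (to pin down the persistent nodes $\alpha,\gamma$), monotonicity of the per-node relation $(\alpha)\pi$ for persistent $\alpha$, and eventual completeness.
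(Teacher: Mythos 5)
Your proposal is correct and follows essentially the same route as the paper's proof: decompose $(u,v)\in R^{\#}$ into an elementary sequence, establish $(pxq,pyq)\in(0)\pi_{\Gamma_i}$ for each elementary step by combining condition (c) of \cref{de-2-sided-enumeration-process} with \cref{lemma-stabilise} to pin down a persistent node $\alpha$, applying condition (d) at $\alpha$, and concatenating a $q$-labelled path, then combining the steps by transitivity at a sufficiently large step. The only difference is one of explicitness: the paper's ``by transitivity \ldots for a large enough $i$'' silently relies on eventual determinism (since $(0)\pi_{\Gamma_i}$ is transitive only when $\Gamma_i$ is deterministic), whereas you correctly spell this out via \cref{lemma-determinism} and the invariant that $\kappa_i$ records all nondeterminism, so $\kappa_j=\Delta_{N_j}$ forces $\Gamma_j$ deterministic.
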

\begin{proof}
  If $(u, v)\in R ^ {\#}$, then there exists an elementary sequence $u = w_1,
  w_2, \ldots, w_k = v$ in $A ^ *$ with respect to $R$. If $(w_j, w_{j + 1})
  \in (0)\pi_{\Gamma_i}$  for every $j \in \{1, \ldots, k - 1\}$, by
  transitivity $(w_1, w_k) = (u, v) \in (0)\pi_{\Gamma_i}$ for a large enough
  $i$. Hence it suffices to show that $(w_j, w_{j + 1}) \in (0)\pi_{\Gamma_i}$
  for all $j$ and for sufficiently large $i$.

  Suppose that $j\in \{1, \ldots, k - 1\}$ is arbitrary, and that $w_j = pxq$
  and $w_{j + 1} = pyq$ where $p, q\in A ^ *$ and $(x, y) \in R$. From part (c)
  of the definition of a congruence enumeration
  (\cref{de-2-sided-enumeration-process}), $p$ labels a path with source $0$ in
  $\Gamma_{j}$ for all $j \geq i$ for some sufficiently large $i$. We may
  choose $i$ large enough so that the target node $\alpha$ of this path is a
  node in every $\Gamma_j$ for $j\geq i$. By
  \cref{de-2-sided-enumeration-process}(d), for sufficiently large $i$,  $(x,
  y) \in (\alpha)\pi_{\Gamma_i}$ and so $(px, py) \in (0)\pi_{\Gamma_i}$. Again
  if $i$ is large enough, there is a path labelled $q$ from the target of the
  path from $0$ labelled by $px$ (or $py$) and so $(w_j, w_{j + 1}) = (pxq,
  pyq)\in (0)\pi_{\Gamma_i}$.
\end{proof}

\begin{cor}\label{cor-injective}
  If $(u, v) \in\rho$, then there exists $K\in \N$ such that  $(u, v) \in
  (0)\pi_{\Gamma_i}$ for all $i \geq K$.
\end{cor}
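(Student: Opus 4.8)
The plan is to combine the description of $\rho$ as a right congruence with \cref{lemma-injective}, which already disposes of the two-sided part $R ^ {\#}$, and with the initialisation steps (a), (b) of \cref{de-2-sided-enumeration-process}, which plant the pairs of $S$ at the node $0$. Since $\rho$ is the least right congruence on $A ^ *$ containing $R ^ {\#}$ and $S$, any pair $(u, v) \in \rho$ is joined by a right elementary sequence $u = w_1, w_2, \ldots, w_k = v$ with respect to $R ^ {\#} \cup S$; that is, for each $j$ we may write $w_j = a_j q_j$ and $w_{j + 1} = b_j q_j$ with $q_j \in A ^ *$ and $(a_j, b_j) \in R ^ {\#} \cup S$ (in either order). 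The first move is to reduce to a single such step: if every consecutive pair $(w_j, w_{j + 1})$ eventually lies in $(0)\pi_{\Gamma_i}$, then, since $(0)\pi_{\Gamma_i} \subseteq (0)\pi_{\Gamma_{i + 1}}$ by \cref{prop-tc-invariant}, all of the finitely many pairs lie in $(0)\pi_{\Gamma_i}$ from some point on, and I may conclude $(w_1, w_k) = (u, v) \in (0)\pi_{\Gamma_i}$ by transitivity once $\Gamma_i$ is deterministic. To secure transitivity I would first record the invariant that the non-deterministic pairs of $\Gamma_i$ are always contained in $\kappa_i$ (immediate by induction from the definitions of \textbf{TC1}--\textbf{TC3}), so that a step with $\kappa_i = \Delta_{N_i}$ forces $\Gamma_i$ to be deterministic; condition (e) of \cref{de-2-sided-enumeration-process} guarantees such a step exists beyond any given stage, and there $(0)\pi_{\Gamma_i}$ is transitive.

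It remains to handle a single elementary step, of which there are two kinds. If $(a_j, b_j) \in R ^ {\#}$, then because $R ^ {\#}$ is a right congruence we have $(w_j, w_{j + 1}) = (a_j q_j, b_j q_j) \in R ^ {\#}$, and \cref{lemma-injective} directly supplies a $K$ with $(w_j, w_{j + 1}) \in (0)\pi_{\Gamma_i}$ for all $i \geq K$. The substantive case is $(a_j, b_j) = (s, t) \in S$ with common suffix $q := q_j$, where I must show $(sq, tq) \in (0)\pi_{\Gamma_i}$ for all sufficiently large $i$. Here I would use the initialisation: steps (a) and (b) add paths labelled $s$ and $t$ with source $0$ and apply \textbf{TC2} to $0$ and $(s, t)$, and once the resulting coincidence is processed by \textbf{TC3} (which condition (e) guarantees), the words $s$ and $t$ label $(0, \beta)$-paths to a single common node $\beta$; hence $(s, t) \in (0)\pi_{\Gamma_i}$, and by monotonicity this persists for all large $i$.

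Finally I would append the common suffix. Because $s$ and $t$ reach the same node $\beta$, it is enough to find a large $i$ at which $q$ labels a path starting at $\beta$ (or at its surviving image): concatenation then produces $(0, \delta)$-paths labelled $sq$ and $tq$ to a common target $\delta$, so that $(sq, tq) \in (0)\pi_{\Gamma_i}$, and \cref{prop-tc-invariant} propagates this to all larger $i$. That $q$ eventually labels such a path follows from condition (c) of \cref{de-2-sided-enumeration-process}, which forces every surviving node to acquire an outgoing edge for each letter, applied letter by letter along $q$, with \cref{lemma-stabilise} used to keep the intermediate nodes fixed while the path is built.

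I expect the main obstacle to be the bookkeeping around nodes changing under \textbf{TC3}: both the common node $\beta$ reached by $s$ and $t$ and the intermediate nodes along the $q$-path may be merged or relabelled as the enumeration proceeds. The clean way around this is to avoid tracking any fixed node through all steps and instead to establish each membership $(w_j, w_{j + 1}) \in (0)\pi_{\Gamma_i}$, and likewise $(sq, tq) \in (0)\pi_{\Gamma_i}$, at a single well-chosen step, relying on the monotonicity of \cref{prop-tc-invariant} to carry it forward to all later $i$; the same device, together with the determinism observation above, is what makes the transitivity reduction in the first paragraph rigorous.
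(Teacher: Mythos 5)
Your proposal is correct and follows essentially the same route as the paper's proof: decompose $(u, v)$ into a right elementary sequence with respect to $R^{\#} \cup S$, dispose of the $R^{\#}$-steps via \cref{lemma-injective}, obtain $S \subseteq (0)\pi_{\Gamma_i}$ from the initialisation steps (a) and (b) together with \cref{prop-tc-invariant} and condition (e) of \cref{de-2-sided-enumeration-process}, and then append the common suffix $q$ as in the proof of \cref{lemma-injective}, with \cref{prop-tc-invariant} carrying each membership forward to all later steps. The only difference is that you make explicit the determinism/transitivity point (non-deterministic pairs lie in $\kappa_i$, so at the steps with $\kappa_i = \Delta_{N_i}$ guaranteed by condition (e) the relation $(0)\pi_{\Gamma_i}$ is transitive), which the paper leaves implicit in its appeal to transitivity.
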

\begin{proof}
  Recall that $\rho$ is the least right congruence containing $R ^ {\#}$ and
  $S$ (as defined in \cref{validity}). If $(u, v)\in \rho$, then there exists a
  right elementary sequence $u = w_1, w_2, \ldots, w_k =v$ such that $w_j = xq$
  and $w_{j + 1} = yq$ for some $(x,y) \in R ^ {\#} \cup S$ and some $q\in A ^
  *$. Hence it suffices to show that if $(x, y) \in R ^ {\#} \cup S$ and $q\in
  A ^ *$, then, for sufficiently large $i$,  $(xq, yq)\in (0)\pi_{\Gamma_i}$.

  If $(\Gamma_{m}, \kappa_{m})$ is the output of applying steps (a) and (b) of
  the congruence enumeration (\cref{de-2-sided-enumeration-process}), then
  $S\subseteq \pi_{\Gamma_m / \kappa_m}(0)$. \cref{prop-tc-invariant} implies
  that $\pi_{\Gamma_m/ \kappa_m}(0) \subseteq \pi_{\Gamma_i/ \kappa_i}(0)$ for
  all $i\geq m$. By \cref{de-2-sided-enumeration-process}(e), \textbf{TC3} is
  eventually applied to $(\Gamma_{i - 1}, \kappa_{i - 1})$ for some $i > m$.
  Hence $S \subseteq (0)\pi_{\Gamma_{i - 1}/ \kappa_{i - 1}} =
  (0)\pi_{\Gamma_{i}}$ since $\Gamma_{i} = \Gamma_{i - 1}/ \kappa_{i - 1}$, for
  large enough $i$.

  Suppose that $q\in A ^ *$ and $(x, y) \in R ^ {\#} \cup S$ are arbitrary. If
  $(x, y) \in R ^ {\#}$ then $(x, y) \in (0)\pi_{\Gamma_i}$ for large enough
  $i$ by \cref{lemma-injective}. If $(x, y) \in S$, then it follows from the
  previous paragraph that $(x, y) \in (0)\pi_{\Gamma_i}$ for large enough $i$.
  As in the proof of \cref{lemma-injective}, for sufficiently large $i$, $xq$
  and $yq$ label paths in $\Gamma_{i}$, and so $(xq, yq) \in
  (0)\pi_{\Gamma_i}$.
\end{proof}


\begin{cor}\label{cor-rho-tau-again}
  Suppose that $u, v \in A ^ *$ are arbitrary. Then $(u, v)\in \rho$ if and
  only if there exists $K\in \N$ such that $(u,v) \in (0)\pi_{\Gamma_i}$ for
  all $i\geq K$.
\end{cor}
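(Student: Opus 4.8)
The plan is to treat the two implications of the equivalence separately, since each has already been set up by an earlier result. The forward implication is nothing other than \cref{cor-injective}: if $(u, v) \in \rho$, then there is some $K \in \N$ with $(u, v) \in (0)\pi_{\Gamma_i}$ for every $i \geq K$. So the only genuine work lies in the converse.

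For the converse I would establish the invariant that $(0)\pi_{\Gamma_i} \subseteq \rho$ holds at every step $i$ of the congruence enumeration, and then simply read off the conclusion. This invariant follows by induction on $i$ using \cref{prop-tc-invariant}. The base case is supplied directly by \cref{de-2-sided-enumeration-process}: the input word graph $\Gamma_1$ satisfies $(0)\pi_{\Gamma_1} \subseteq \rho$ by hypothesis, and since $\kappa_1 = \Delta_{N_1}$ we have $\Gamma_1 / \kappa_1 = \Gamma_1$, so the companion inclusion $(0)\pi_{\Gamma_1 / \kappa_1} \subseteq \rho$ holds as well. For the inductive step, every step of the enumeration is an application of \textbf{TC1}, \textbf{TC2}, or \textbf{TC3}, so \cref{prop-tc-invariant} carries both inclusions forward from $(\Gamma_i, \kappa_i)$ to $(\Gamma_{i + 1}, \kappa_{i + 1})$. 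Granting the invariant, the converse is immediate: if there exists $K$ with $(u, v) \in (0)\pi_{\Gamma_i}$ for all $i \geq K$, then in particular $(u, v) \in (0)\pi_{\Gamma_K} \subseteq \rho$. Note that the hypothesis ``for all $i \geq K$'' is in fact stronger than needed here, as a single index suffices for this direction; the uniform statement is exactly what \cref{cor-injective} returns for the forward direction.

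I do not expect a substantial obstacle, since \cref{cor-injective} and \cref{prop-tc-invariant} jointly furnish all the machinery, and the corollary really just repackages them as a biconditional. The one point deserving care is confirming that the inductive use of \cref{prop-tc-invariant} genuinely covers every step of the enumeration, in particular the applications of \textbf{TC2} driven by the relations from $S$ in part (b) of \cref{de-2-sided-enumeration-process}, and that the two-part hypothesis of the proposition is preserved throughout; the base-case computation above, propagated by \cref{prop-tc-invariant}, is precisely what secures this.
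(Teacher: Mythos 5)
Your proof is correct and follows essentially the same route as the paper's: the forward implication is delegated to \cref{cor-injective}, and the converse comes from the invariant $(0)\pi_{\Gamma_i}\subseteq \rho$ propagated through every step by \cref{prop-tc-invariant} from the base case $(0)\pi_{\Gamma_1}\subseteq\rho$ (with $\Gamma_1/\kappa_1 = \Gamma_1$ since $\kappa_1 = \Delta_{N_1}$) supplied by \cref{de-2-sided-enumeration-process}. If anything, your citation of \cref{cor-injective} for the forward direction is the more apt reference, since the paper's own proof of that direction appeals to \cref{lemma-injective}, which strictly speaking covers only pairs in $R^{\#}$ rather than all of $\rho$.
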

\begin{proof}
  ($\Rightarrow$) Assume that $(u, v)\in \rho$. By \cref{lemma-injective}, $(u,
  v) \in (0)\pi_{\Gamma_K}$ for large enough $K$. But $(0)\pi_{\Gamma_i}
  \subseteq (0)\pi_{\Gamma_{i + 1}}$ for all $i$, and so $(u, v) \in
  (0)\pi_{\Gamma_i}$ for all $i\geq K$.
  \medskip

  ($\Leftarrow$) By assumption, $\Gamma_{1}$ is a word graph with
  $(0)\pi_{\Gamma_1}\subseteq \rho$. By \cref{prop-tc-invariant},
  $(0)\pi_{\Gamma_i} \subseteq \rho$ for all $i\in \N$. Hence if $(u, v)\in
  (0)\pi_{\Gamma_i}$ for some $i$, then $(u, v)\in \rho$.
\end{proof}

We also obtain the following stronger result for an enumeration of a two-sided
congruence.

\begin{cor}\label{cor-rho-tau}
  Suppose that $u, v\in A ^ *$ are arbitrary. Then  the following are equivalent:
  \begin{enumerate}
    \item [\rm (i)]
          $(u, v)\in R ^ {\#}$;
    \item [\rm (ii)]
          there exists $K\in \N$ such that $(u, v) \in (\alpha)\pi_{\Gamma_i}$
          for all nodes $\alpha$ in $\Gamma_{i}$ and for all $i\geq K$;
    \item [\rm (iii)]
          there exists $K\in \N$ such that $(u, v) \in (0)\pi_{\Gamma_i}$ for
          all $i\geq K$.
  \end{enumerate}
\end{cor}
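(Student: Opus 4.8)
The plan is to establish the cycle (i) $\Rightarrow$ (ii) $\Rightarrow$ (iii) $\Rightarrow$ (i), of which two implications are almost immediate. For (ii) $\Rightarrow$ (iii) I would simply take $\alpha = 0$: the node $0$ belongs to $N_i$ for every $i$ (it lies in $N$ by definition, and since it is the least natural number occurring it equals $\min(0/\kappa)$ and so is never deleted by \textbf{TC3}), so the special case $\alpha = 0$ of (ii) is exactly (iii). For (iii) $\Rightarrow$ (i) I would use that we are enumerating the two-sided congruence $R ^ {\#}$, so that $S = \varnothing$ and the relation $\rho$ of \cref{cor-rho-tau-again} is $R ^ {\#}$ itself; then (iii) is precisely the right-hand condition of \cref{cor-rho-tau-again} and yields $(u,v)\in R ^ {\#}$. (Equivalently, \cref{prop-tc-invariant} gives $(0)\pi_{\Gamma_i}\subseteq R ^ {\#}$ for every $i$, which delivers the same conclusion.)

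The real content is (i) $\Rightarrow$ (ii), and here the two-sided hypothesis is used essentially. Given $(u,v)\in R ^ {\#}$, I would first write a two-sided elementary sequence $u = w_1, \ldots, w_k = v$ with respect to $R$, so that a typical step is $w_j = pxq$, $w_{j+1} = pyq$ for some $p, q\in A ^ *$ and $(x,y)\in R$. Fixing a node $\alpha$ of a complete and deterministic $\Gamma_i$, the word $p$ labels a unique path from $\alpha$ to some node $\alpha'$; compatibility with $R$ at $\alpha'$ --- which every surviving node eventually enjoys, by \cref{de-2-sided-enumeration-process}(d) --- provides a common target for the $x$- and $y$-paths out of $\alpha'$; and completeness lets the suffix $q$ continue both to a common target. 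Hence $(w_j, w_{j+1}) = (pxq, pyq)\in(\alpha)\pi_{\Gamma_i}$ for this (and, crucially, every) node $\alpha$, and since a deterministic word graph has each $(\alpha)\pi_{\Gamma_i}$ an equivalence relation, chaining along the sequence gives $(u,v)\in(\alpha)\pi_{\Gamma_i}$.

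The hard part will be producing a single $K$ that works for all nodes at once, rather than a node-by-node bound. For any one surviving node, \cref{de-2-sided-enumeration-process}(c) and (d) eventually supply exactly the outgoing edges and the $R$-compatibility used above; the obstruction is that fresh nodes, which initially have no outgoing edges, may keep being created. This is precisely why the all-nodes statement (ii) is peculiar to the two-sided case: for a general right congruence $\rho$ a pair $(u,v)\in\rho$ need not satisfy $(pu,pv)\in\rho$, so node-$0$ information cannot be transported elsewhere, whereas the left-stability of $R ^ {\#}$ makes the transport in the previous paragraph legitimate. I would obtain the uniform $K$ from completeness: once some $\Gamma_i$ is complete (hence, being also deterministic and compatible with $R$, terminal) no further nodes are created and the word graph is fixed from then on, so the elementary-sequence argument applies verbatim at every node with the common bound $K = i$. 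I would therefore phrase (i) $\Rightarrow$ (ii) in terms of the stabilised complete word graph available in this setting, and check that the argument goes through at an arbitrary one of its nodes.
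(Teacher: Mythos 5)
Your implications (ii) $\Rightarrow$ (iii) and (iii) $\Rightarrow$ (i) coincide with the paper's. The genuine gap is in (i) $\Rightarrow$ (ii): you manufacture the uniform bound $K$ from the existence of a complete word graph $\Gamma_i$, but the corollary carries no termination hypothesis. A congruence enumeration for $R^{\#}$ terminates if and only if $A^*/R^{\#}$ is finite (\cref{validity}), and by \cref{cor-terminates-if-complete} completeness at any step forces termination; so when $A^*/R^{\#}$ is infinite no $\Gamma_i$ is ever complete and your argument never gets off the ground. Worse, in a non-terminating enumeration \textbf{TC1} keeps creating fresh nodes with no outgoing edges, so for nonempty $u$ no single $K$ can serve literally every node of every $\Gamma_i$ with $i \geq K$; condition (ii) has to be read, and is proved in the paper, node by node --- for each $\alpha$ there is a threshold (depending on $\alpha$) beyond which $(u,v)\in(\alpha)\pi_{\Gamma_i}$ for as long as $\alpha$ survives. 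Your own sketch flags the uniformity issue as ``the hard part'', but the resolution you chose restricts the proof to the terminating case, which is exactly the case where the corollary is least needed.

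The paper's route avoids completeness, determinism, and elementary sequences at $\alpha$ entirely, and it is the correct way to exploit the two-sidedness you rightly identified. Given $(u,v)\in R^{\#}$ and a node $\alpha$ reached from $0$ by a path labelled $w$, left-stability gives $(wu,wv)\in R^{\#}$; then \cref{lemma-injective} --- where parts (c) and (d) of \cref{de-2-sided-enumeration-process} do all the limiting work of supplying paths and compatibility at the node $0$, with no completeness assumption --- yields $(wu,wv)\in(0)\pi_{\Gamma_i}$ for all sufficiently large $i$, so $wu$ and $wv$ label $(0,\beta)$-paths to a common $\beta$. Stripping the common prefix path labelled $w$ leaves $u$ and $v$ labelling $(\alpha,\beta)$-paths, whence $(u,v)\in(\alpha)\pi_{\Gamma_i}$. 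In other words, the transport of node-$0$ information to $\alpha$ should go through the pair $(wu,wv)$ at $0$, not through a complete word graph that may never appear. Your per-node elementary-sequence argument could in principle be repaired by redoing the stabilisation bookkeeping of \cref{lemma-injective} at an arbitrary surviving node, but as written the appeal to completeness is a real gap.
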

\begin{proof}
  \textbf{(i) $\Rightarrow$ (ii).}
  If $(u, v)\in R ^ {\#}$ and $w\in A ^ *$ is arbitrary, then $(wu, wv) \in R ^
  {\#}$ and so, by \cref{lemma-injective}, $(wu, wv)\in (0)\pi_{\Gamma_i}$ for
  sufficiently large $i$. In particular, for large enough $i$, $wu$ and $wv$
  both label $(0, \beta)$-paths for some node $\beta$. If $\alpha$ is the
  target of the path starting at $0$ labelled by $w$, then $u$ and $v$ both
  label a $(\alpha, \beta)$-paths in $\Gamma_{i}$ and so $(u, v) \in
  (\alpha)\pi_{\Gamma_{i}}$ for $i$ large enough.
  \medskip

  \noindent \textbf{(ii) $\Rightarrow$ (iii).}
  This follows immediately since $0$ is a node in every word graph.
  \medskip

  \noindent \textbf{(iii) $\Rightarrow$ (i).}
  Follows by applying \cref{cor-rho-tau-again} to $\rho = R ^ {\#}$.
\end{proof}


\subsection{The proofs of \cref{validity} and \cref{cor-tc-terminates}}\label{subsection-validity-proof}

In this section, we prove \cref{validity} and \cref{cor-tc-terminates}.

\validity*
\begin{proof}
  We start the proof by showing that there is always a bijection between a
  certain set of nodes in the word graphs $\Gamma_i$ (defined in a moment) and
  the classes of $\rho$. Let $M$ be the subset of $\bigcup_{i \in \N} N_i$ so
  that $\alpha\in M$ if $\alpha\in N_i$ for all large enough $i$. By
  \cref{lemma-stabilise}, for every $\alpha \in M$ there exists $w_\alpha\in A
  ^ *$ such that $w_\alpha$ labels a path from $0$ to $\alpha$ in every
  $\Gamma_i$ for sufficiently large $i$. We define $f: M \to A ^ * / \rho $ by
  \begin{equation}\label{eq-action}
    (\alpha)f = w_\alpha/\rho.
  \end{equation}
  We will show that $f$ is a bijection.

  If $\alpha, \beta\in M$ are such that $(\alpha)f = (\beta)f$, then
  $(w_\alpha, w_\beta) \in \rho$ and so $(w_\alpha, w_\beta) \in
  (0)\pi_{\Gamma_i}$ for large enough $i$ by \cref{cor-rho-tau-again}. In
  particular, $w_\alpha$ and $w_\beta$ both label paths from $0$ to the same
  node in every $\Gamma_i$ for large enough $i$. By \cref{lemma-stabilise}, we
  can also choose $i$ large enough so that the target $\gamma$ of these paths
  is a node in $\Gamma_j$ for all $j\geq i$. But $w_\alpha$ and $w_\beta$ also
  label $(0, \alpha)$- and $(0, \beta)$-paths in every $\Gamma_i$ for
  sufficiently large $i$. By \cref{lemma-determinism}, there exists $j \geq i$
  such that $w_{\alpha}$ labels a unique path starting at $0$, and so $\alpha =
  \gamma$. Similarly, there exists $k\geq j$ such that $w_{\beta}$ labels a
  unique path from $0$, and so $\beta = \gamma$ also. In particular, $\alpha =
  \beta$ and so  $f$ is injective.

  For surjectivity, let $u\in A ^ *$ be arbitrary. Then $u$ labels a path in
  $\Gamma_{i}$ for large enough $i$. Since every congruence enumeration
  stabilises, we can choose $i$ large enough so that every edge in the path
  labelled by $u$ belongs to every $\Gamma_{j}$ for $j\geq i$. In particular,
  $u$ labels a $(0, \alpha)$-path in $\Gamma_{i}$ for some $\alpha\in M$. But
  $w_{\alpha}$  also labels a $(0, \alpha)$-path in $\Gamma_i$, and so $(u,
  w_{\alpha}) \in (0)\pi_{\Gamma_{i}}$. Hence, again by
  \cref{cor-rho-tau-again},  for large enough $i$, $(u, w_{\alpha}) \in \rho$
  and so $(\alpha)f = w_{\alpha} /\rho = u /\rho$, and $f$ is surjective.
  \medskip

  \textbf{(a).}
  Suppose that $(\Gamma, \kappa)$ is the output of the enumeration where
  $\Gamma=(N, E)$ and that $\pi:N \to \mathfrak{P}(A ^ * \times A ^ *)$ is the
  path relation of $\Gamma$. Since the congruence enumeration process has
  terminated,  $\Gamma$ is finite and its set of nodes $N$ coincides with the
  set $M$ defined at the start of the proof. Hence $A ^ * / \rho$ is finite,
  since $f:N \to A ^ * / \rho$ defined in \eqref{eq-action} is a bijection.

  Since any application of \textbf{TC3} results in no changes to $\Gamma$, it
  follows that $\kappa = \Delta_{N}$. Hence $\Gamma$ is deterministic, and so
  every $w\in A ^ *$ labels exactly one path starting at every $u \in N$. In
  particular, $\phi$ (as defined in the statement) is a well-defined function.
  If $w_1, w_2\in A ^ *$ label  $(\alpha_1,\alpha_2)$- and $(\alpha_2,
  \alpha_3)$-paths in $\Gamma$, respectively, then certainly $w_1w_2$ labels a
  $(\alpha_1, \alpha_3)$-path, and so $\phi$ is an action.

  We will show that $f: N \to A ^ * / \rho$ defined in \eqref{eq-action} is an
  isomorphism of the actions $\phi$ and the natural action $\Phi: A ^ * / \rho
  \times A ^ * \to  A ^ * / \rho$ defined by $(u / \rho, v)\Phi = uv /\rho$. We
  define $g: N \times A ^ * \to (A ^ * / \rho) \times A ^ *$ by $(\alpha, v)g =
  ((\alpha)f, v) = (w_{\alpha}/\rho, v)$ for all $(\alpha, v)\in N \times A ^
  *$. To show that $f$ is a homomorphism of actions, we must prove that the
  diagram in \cref{fig-validity-a} commutes.
  \begin{figure}
    \centering
    \begin{tikzcd}
      N\times A ^ *
      \arrow[d, "\phi"]
      \arrow[r, "g"]
      & A ^ * / \rho \times A ^ *
      \arrow[d, "\Phi"] \\
      N \arrow[r, "f"]
      & A ^ * / \rho
    \end{tikzcd}
    \caption{The commutative diagram in the proof of
      \cref{validity}(a).}
    \label{fig-validity-a}
  \end{figure}
  Suppose that $(\alpha, v) \in N\times A ^ *$. Then
  \[
    ((\alpha, v)g)\Phi = (w_{\alpha}/\rho, v)\Phi = w_{\alpha}v / \rho
  \]
  and if $v$ labels a $(\alpha, \beta)$-path in $\Gamma$,  then
  \[
    ((\alpha, v)\phi)f = (\beta)f = w_\beta / \rho.
  \]
  But $w_{\beta}$ and $w_{\alpha}v$ both label $(0, \beta)$-paths in $\Gamma$
  and so $(w_{\alpha}v, w_{\beta}) \in (0)\pi$. By \cref{cor-rho-tau-again},
  $(0)\pi \subseteq \rho$ and so $(w_{\alpha}v, w_{\beta})\in \rho$. Hence
  $((\alpha, v)g)\Phi = ((\alpha, v)\phi)f$ and so $f$ is homomorphism of the
  actions $\phi$ and $\Phi$.
  \bigskip

  \textbf{(b).}
  If $A ^ * / \rho$ is finite, then the set $M$ of nodes defined at the start
  of the proof is also finite.  Since every congruence enumeration stabilises,
  it follows that for every $\alpha\in M$, there exists $K\in \N$ such that
  $(\alpha, a, \beta)\in E_i$ for all $i\geq K$. It follows that $\beta \in M$.
  Since $A$ and $M$ are finite, there exists $K\in \N$  such that $(\alpha, a,
  \beta) \in E_i$ for all $\alpha\in M$, all $a\in A$, and all $i\geq K$. It
  follows that none of \textbf{TC1}, \textbf{TC2}, nor \textbf{TC3} results in
  any changes to $\Gamma_i$ or $\kappa_i$, $i\geq K$, and so the enumeration
  terminates.
\end{proof}

We conclude this section by proving \cref{cor-tc-terminates}.

\corollaryvalidity*
\begin{proof}
  If $S = \varnothing$ in \cref{validity}, then the least right congruence
  $\rho$ containing $S$ and $R ^ {\#}$ is just $R ^ {\#}$. By \cref{validity},
  the action $\phi: N\times A ^ *\to N$ is isomorphic to the natural action of
  $A ^ *$ on $A ^ * / R ^ {\#}$ by right multiplication. By \cref{cor-rho-tau},
  the kernel of $\phi$ is $R ^ {\#}$, and it is routine to verify that the
  kernel of the action of $A ^ *$ on $A ^ * / R ^ {\#}$ by right multiplication
  is also $R ^ {\#}$. Hence, by \cref{prop-quotient-action}, the action $\phi$
  of $A ^ * $ on $N$ and the induced action of $A ^ * / R ^ {\#}$ on $N$ are
  isomorphic. Similarly, the action of $A ^ *$ on $A ^ * / R ^ {\#}$ is
  isomorphic to the action of $A ^ * /R ^ {\#}$ on $A ^ * / R ^ {\#}$ by right
  multiplication. Thus the actions of  $A ^ * / R ^ {\#}$ on $N$ and $A ^ * /R
  ^ {\#}$ on $A ^ * / R ^ {\#}$ are isomorphic also. The latter action is
  faithful since $A ^ * / R ^ {\#}$ is a monoid. Hence the action of $A ^ * / R
  ^ {\#}$ on $N$ is faithful also, and there is nothing further to prove.
\end{proof}

\section{Monoids not defined by a presentation}\label{section-non-abstract}

Suppose that the monoid $M$ defined by the presentation $\langle A | R\rangle$
is finite and that $\phi: A ^* \to M$ is the unique surjective homomorphism
extending the inclusion of $A$ in $M$. Then $\ker(\phi) = R ^ {\#}$ and
$(a)\phi = a$ for every $a\in A$. If $M = \{m_1, m_2, \ldots, m_{|M|}\}$, then
the \textbf{\textit{right Cayley graph of $M$ with respect to $A$}} is the word
graph $\Gamma$ with nodes $N = \{0, \ldots, |M|-1\}$ and edges $(\alpha, a,
\beta)$ for all $\alpha, \beta \in N$ and every $a\in A$ such that $m_{\alpha}a
= m_{\beta}$. The right Cayley graph $\Gamma$ is complete and deterministic,
and so if $\pi : N \to \mathfrak{P}(A ^ * \times A ^ *)$ is the path relation
on $\Gamma$, then $(\alpha)\pi = R ^ {\#}$ for all $\alpha\in N$.


\begin{ex}\label{ex-cayley-digraph}
  Suppose that $M$ is the monoid generated by the matrices
  \[
    a =
    \begin{pmatrix}
      1 & 1 & 0 \\
      0 & 1 & 1 \\
      1 & 0 & 1
    \end{pmatrix},
    \quad
    b =
    \begin{pmatrix}
      1 & 1 & 0 \\
      0 & 1 & 0 \\
      0 & 0 & 1
    \end{pmatrix},
    \quad
    c =
    \begin{pmatrix}
      1 & 0 & 1 \\
      1 & 1 & 0 \\
      0 & 1 & 1
    \end{pmatrix}
  \]
  over the boolean semiring $\mathbb{B} = \{0, 1\}$ with addition defined by
  \begin{align*}
    0 + 1 = 1 + 0 = 1 + 1 = 1, \qquad
    0 + 0 = 0
  \end{align*}
  and multiplication defined as usual for the real numbers $0$ and $1$.  The
  right Cayley graph of $M$ with respect to its generating set $\{a, b, c\}$
  is shown in \cref{fig-ex-cayley-digraph}; see also \cref{table-ex-cayley-digraph}.

  \begin{figure}
    \centering
      \includegraphics{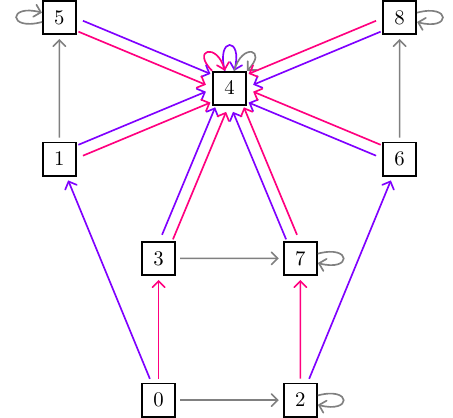}
    \caption{The right Cayley graph of the monoid $M$ from
      \cref{ex-cayley-digraph} with respect to the generating set $\{a, b,
      c\}$; see \cref{table-ex-cayley-digraph} for a representative word for
      each node. Purple arrows correspond to edges labelled $a$, gray labelled
      $b$, and pink labelled $c$.}
    \label{fig-ex-cayley-digraph}
  \end{figure}
  \begin{table}\centering
    \begin{tabular}{l|l|l|l|l|l|l|l|l}
      \renewcommand{\arraystretch}{1.2}
      0             & 1   & 2   & 3   & 4       & 5    & 6    & 7    & 8     \\ \hline
      $\varepsilon$ & $a$ & $b$ & $c$ & $a ^ 2$ & $ab$ & $ba$ & $bc$ & $bab$
    \end{tabular}
    \caption{A word labelling a path from $0$ to each node in the right Cayley
    graph on the monoid $M$ from \cref{ex-cayley-digraph}; see
  \cref{fig-ex-cayley-digraph}.}\label{table-ex-cayley-digraph}
  \end{table}
\end{ex}


Suppose that we want to determine a right congruence on a finite monoid $M$
generated by a non-abstract set of generators $A$ --- such as transformations,
partial permutations, or matrices over a semiring. Of course, every finite
monoid $M$ is finitely presented, and so congruence enumeration can be applied
to any presentation for $M$. It is possible that a presentation for $M$ is
already known, or we can compute a presentation for $M$; for example, by using
the Froidure-Pin Algorithm (see~\cite{Froidure1997aa}).

It is also possible, instead of starting with the trivial word graph, to start
a congruence enumeration with  the right Cayley graph $\Gamma_{1} = (N_1, E_1)$
of $M$ with respect to $A$ as the input to the process. As mentioned above,
$(0)\pi_{\Gamma_1} = R ^ {\#}$ and so $(\Gamma_1, \Delta_{N_1})$ is a valid
input to a congruence enumeration. The right Cayley graph $\Gamma_{1}$ is
already complete, deterministic, and compatible with $R ^ {\#}$ by definition.
It follows that \textbf{TC2} and \textbf{TC3} are the only steps actually
applied in a congruence enumeration with input $\Gamma_{1}$. More precisely,
\textbf{TC2}(c) is applied to $0$ and every $(u, v) \in S$, and then
\textbf{TC3} is repeatedly applied until $\kappa_{i} = \Delta_{N_i}$. The right
Cayley graph of a monoid $M = \genset{A}$ of one of the types mentioned above,
can also be computed using the Froidure-Pin Algorithm, which has complexity
$O(|A||M|)$. Given that the right Cayley graph has $|M|$ nodes and $|A||M|$
edges, it is unlikely that, in general, there is a better method for finding
the Cayley graph of a monoid from a generating set.

Experiments indicate that if the congruence being enumerated has a relatively
large number of classes in comparison to $|M|$, then this second approach is
faster than starting from the trivial word graph. On the other hand, if the
number of congruence classes is relatively small compared with $|M|$, then
starting from the trivial word graph is often faster. Since the number of
congruence classes is usually not known in advance, the implementation in
\libsemigroups runs both these approaches in parallel, accepting whichever
provides an answer first.

We end this section with an example of performing a congruence enumeration with
a right Cayley graph as an input.


\begin{ex}
  \label{ex-cayley-digraph-cong}
  Suppose that $M$ is the monoid from \cref{ex-cayley-digraph}. To compute the
  least right congruence $\rho$ on $M$ containing $S= \{(a, b)\}$ we perform
  the following steps.

  Set $\Gamma_{1}$ to be the right Cayley graph of $M$ with respect to the
  generating set $\{a, b, c\}$; see \cref{fig-ex-cayley-digraph} and
  \cref{table-ex-cayley-digraph}. Suppose that $a < b < c$.

  \begin{description}
    \item[Step 1:]
      Apply \textbf{TC2} to the only pair $(a, b)$ in $S$, and the output of
      this is $(\Gamma_{2}, \kappa_{2})$ where $\kappa_{2}$ is the least
      equivalence on the nodes of $\Gamma_2$ containing  $(1, 2)$; see
      \cref{fig-delta-1}.

    \item [Step 2:]
          Apply \textbf{TC3} to obtain the quotient of $\Gamma_{2}$ by
          $\kappa_{2}$, the output is $(\Gamma_{3}, \kappa_{3})$ where
          $\Gamma_{3} = \Gamma_{2} / \kappa_{2}$ and $\kappa_{3}$ is the least
          equivalence containing $(1, 5)$, $(4, 6)$, and $(4, 7)$; see
          \cref{fig-delta-2}.

    \item [Step 3:]
          Apply \textbf{TC3} with input $(\Gamma_{3}, \kappa_{3})$ and output
          $(\Gamma_{4}, \kappa_{4})$ where $\Gamma_{4} = \Gamma_{3} /
          \kappa_{3}$ and $\kappa_{4}$ is the least equivalence containing $(4,
          8)$; see \cref{fig-delta-3}.

    \item [Step 4:]
          Apply \textbf{TC3} with input $(\Gamma_{4}, \kappa_{4})$ and output
          $(\Gamma_{5}, \kappa_{5})$ where $\Gamma_{5} = \Gamma_{4} /
          \kappa_{4}$ and $\kappa_{5} = \Delta_{N_5}$. Since $\Gamma_{5}$ is
          complete and compatible with $\rho$ and $\kappa_{5}$ is trivial, the
          enumeration terminates with $\Gamma_{5}$; see \cref{fig-delta-4}.
  \end{description}

  We conclude that $\rho$ has $4$ equivalence classes whose representatives
  $\varepsilon$, $a$, $c$, and $a ^ 2$ correspond to   the nodes $\{0, 1, 3,
  4\}$ in the graph $\Gamma_{5}$.  By following the paths in $\Gamma_{5}$
  starting at $0$ labelled by each of the words in
  \cref{table-ex-cayley-digraph} we determine that the congruence classes of
  $\rho$ are: $\{\varepsilon\}$, $\{a, b, ab\}$, $\{c\}$, $\{a ^ 2, ba, bc,
  bab\}$.


  \begin{figure}
    \begin{subfigure}{0.49\textwidth}
      \centering
      \includegraphics{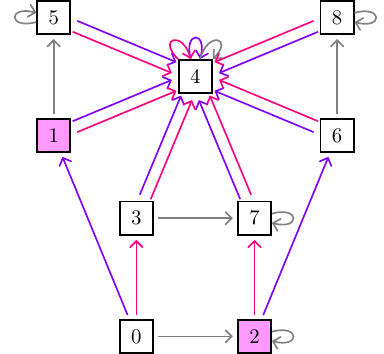}
      \caption{Step 1.}\label{fig-delta-1}
    \end{subfigure}
    \begin{subfigure}{0.49\textwidth}
      \centering
      \includegraphics{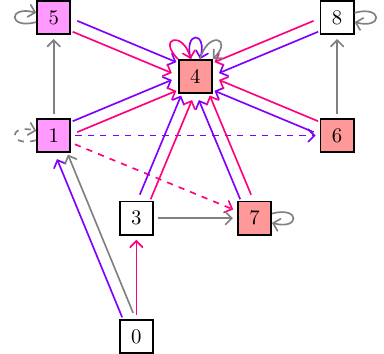}
      \caption{Step 2.}\label{fig-delta-2}
    \end{subfigure}
    \vspace{\baselineskip}

    \begin{subfigure}{0.49\textwidth}
      \centering
      \includegraphics{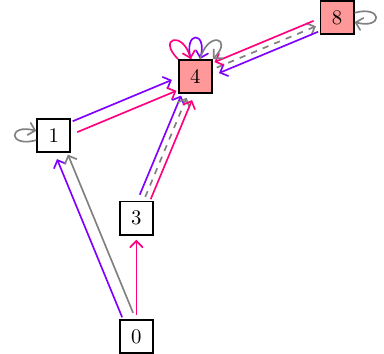}
      \caption{Step 3.}\label{fig-delta-3}
    \end{subfigure}
    \begin{subfigure}{0.49\textwidth}
      \centering
      \includegraphics{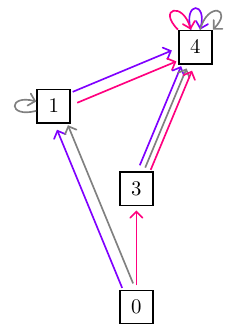}
      \caption{Step 4.}\label{fig-delta-4}
    \end{subfigure}
    \caption{The output $(\Gamma_{i}, \kappa_{i})$ of each step in
      \cref{ex-cayley-digraph-cong}.  Purple arrows correspond to $a$, gray to
      $b$, pink to $c$, shaded nodes of the same colour belong to $\kappa_{i}$,
      and unshaded nodes belong to singleton classes; see
      \cref{table-ex-cayley-digraph} for a representative word for each node. A
      dashed edge with a single arrowhead denotes an edge that is obtained from
      \textbf{TC2} or \textbf{TC3}, solid edges correspond to edges that
      existed at the previous step.}\label{fig-ex-cayley-digraph-cong}
  \end{figure}
\end{ex}

\section{The HLT strategy}
\label{section-hlt}

In this section, we describe the so-called \defn{HLT strategy} for congruence
enumeration. The acronym HLT stands for Haselgrove, Leech, and Trotter; for
further details see~\cite{Cannon1973aa} or~\cite[Section 5.2]{Sims1994aa}.  For
the record, this is Walker's Strategy 1 in~\cite{Walker1992aa}, and is referred
to as R-style in ACE~\cite{Havas1999aa} (``R'' for ``relators'').

The HLT strategy, like every congruence enumeration, starts by applying steps (a) and (b) in the definition (\cref{de-2-sided-enumeration-process}). We then repeatedly apply \textbf{TC3} until the resulting $\kappa$ is trivial, after which we repeatedly perform the following steps:
\begin{enumerate}
  \item[\textbf{HLT1.}]
    If $\alpha$ is the minimum node in $\Gamma_{i}$ such that there exists $(u, v)\in R$
    where either: $u$ or $v$ does not label a path starting at $\alpha$, or $u$ and $v$ label paths ending in different nodes. In the former case, we apply \textbf{TC1} repeatedly until $u$ and $v_1$ label paths starting at $\alpha$ where $v= v_1b$ for some $b\in A$, and then apply \textbf{TC2} where part (a) applies. In the latter case, we simply apply \textbf{TC2} to $\alpha$ and $(u, v)\in R$ where part (c) applies.

  \item[\textbf{HLT2.}]
    Repeatedly apply \textbf{TC3} until the resulting $\kappa_{i + 1}$ is trivial.

  \item[\textbf{HLT3.}]
    If $\alpha$ is the node from \textbf{HLT1}, then we apply
    \textbf{TC1} to $\alpha$ and every $a\in A$, if any, such that there is no edge incident to $\alpha$ labelled by $a$.
\end{enumerate}

We note that if for every $a\in A$ there exists $(u, v)\in R$ such that either $u$ or $v$ starts with $a$, then \textbf{HLT3} does not have to be performed.

Next we will show that the HLT strategy fulfils the definition of a congruence enumeration.

\begin{prop}\label{prop-hlt-is-valid}
  If $\langle A| R\rangle$ is a finite monoid presentation, $S$ is a finite
  subset of $A ^ * \times A ^ *$, $R ^ {\#}$ is the least two-sided congruence on
  $A ^ *$ containing $R$, and $\rho$ is the least right congruence on $A ^ *$
  containing $S$ and $R ^ {\#}$, then the HLT strategy applied to $(\Gamma_{1}, \kappa_{1})$ where
  $\Gamma_{1} = (N_1, E_1)$ is a word graph  such that $(0)\pi_{\Gamma_1}\subseteq \rho$ and $\kappa_{1} = \Delta_{N_1}$ is a congruence enumeration.
\end{prop}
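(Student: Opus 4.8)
The plan is to verify that the sequence of applications of \textbf{TC1}, \textbf{TC2}, and \textbf{TC3} prescribed by the HLT strategy satisfies each of the five conditions (a)--(e) of \cref{de-2-sided-enumeration-process}. Conditions (a) and (b) are immediate, since the HLT strategy begins by performing exactly steps (a) and (b) of the definition. Condition (e) is handled by the coincidence-processing phases: after steps (a) and (b), and again in every \textbf{HLT2}, we apply \textbf{TC3} repeatedly until $\kappa$ is trivial, and this phase always terminates because there are only finitely many quotients of any word graph. Since every \textbf{TC2}(c) or \textbf{HLT1} that could make $\kappa_i \neq \Delta_{N_i}$ is immediately followed by such a phase, condition (e) holds. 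The substance of the proof therefore lies in conditions (c) and (d), which assert that the enumeration is eventually complete and eventually compatible with $R$.

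The key auxiliary fact I would establish first is that \emph{compatibility persists}: if $(u, v) \in (\alpha)\pi_{\Gamma_i}$ for some node $\alpha$ and some $(u, v) \in R$, then $(u, v) \in (\beta)\pi_{\Gamma_j}$ for all $j \geq i$, where $\beta$ is the node of $\Gamma_j$ into which $\alpha$ is carried, unless $\alpha$ has been deleted. For \textbf{TC1}, \textbf{TC2}(a), and \textbf{TC2}(c) this is clear, since the relevant map is a homomorphism that adds no edge along the existing paths labelled $u$ and $v$, so these paths, and hence their common target, survive intact; here determinism, which holds at the start of every \textbf{HLT1} by the preceding \textbf{HLT2}, rules out the creation of a spurious alternative path. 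For \textbf{TC3} the claim follows from \cref{lemma-path-equiv}, since the quotient map preserves paths and their common target. Dually, I would observe that a node freshly created by \textbf{TC1} has no outgoing edges and is therefore incompatible with every relation of $R$ whose two sides are not both empty.

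I would then prove (c) and (d) together by strong induction on the index of a node $\alpha$. Fixing $\alpha$ and a relation $(u, v) \in R$, I may assume by \cref{lemma-stabilise} that $\alpha$ persists and that its incident edges stabilise. By the induction hypothesis every persistent node of smaller index is eventually compatible with all of $R$, and by the persistence fact such compatibility is never lost; hence from some step onward no node of index less than $\alpha$ is incompatible with any relation, so the \emph{minimum} incompatible node selected by \textbf{HLT1} is at least $\alpha$. As long as $\alpha$ is incompatible with some relation it is exactly this minimum node, so \textbf{HLT1} together with the ensuing \textbf{HLT2} renders one further relation compatible at $\alpha$ without disturbing those already satisfied; after at most $|R|$ such rounds $\alpha$ is compatible with all of $R$, giving (d). In the round in which $\alpha$ is the selected node, \textbf{HLT3} then adjoins an edge for every label $a \in A$ missing at $\alpha$, including any label occurring in no relation, which yields (c).

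The main obstacle I anticipate is the bookkeeping in this descent step: because \textbf{TC1} continually introduces new, higher-indexed nodes while \textbf{TC3} can merge existing ones, one must argue carefully that the supply of incompatibilities below a fixed index is exhausted in finitely many \textbf{HLT1} rounds and is never replenished. This is precisely where the persistence fact and the minimum-node selection rule must be combined: new nodes never fall below $\alpha$, quotients only lower indices, and each \textbf{HLT1} round strictly decreases the number of pairs (persistent node of index $\leq \alpha$, relation) that are incompatible, without creating any new such pair, so the count descends to zero.
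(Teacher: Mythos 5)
Your proposal is correct and follows essentially the same route as the paper's proof: both verify conditions (a)--(e) of \cref{de-2-sided-enumeration-process} directly, dispatching (a), (b) and (e) exactly as you do, and establishing (c) and (d) via the observation that every persistent node is eventually the minimum node to which \textbf{HLT1} is applied (with \textbf{HLT3} supplying edges for letters heading no relation word). The only difference is one of detail: the paper simply \emph{asserts} that each persistent node is eventually the \textbf{HLT1}-selected node, whereas you justify this with your persistence-of-compatibility lemma and the induction/counting argument on node indices, filling in a step the paper leaves implicit.
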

\begin{proof}
  It suffices to check that the conditions in \cref{de-2-sided-enumeration-process}
  hold. Clearly parts (a) and (b) are included in the HLT strategy and so there is nothing to check for these.
  \medskip

  \noindent \textbf{(c).}
  Suppose that $\alpha$ is a node in $\Gamma_{i}= (N_i, E_i)$ such that there is no edge labelled by some $a\in A$ incident to $\alpha$.
  If $\alpha\in N_j$ for all $j \geq i$, then there
  exists $k\in \N$ such that $\alpha$ is the minimum node to which
  \textbf{HLT1} is applied.  If there exists $(u, v)\in R$ such that the
  first letter of $u$ is $a$, then an edge labelled by $a$ incident to $\alpha$ is defined in
  \textbf{HLT1}.  Otherwise, such an edge is defined in \textbf{HLT3}.
  \medskip

  \noindent \textbf{(d).}
  Suppose that $\alpha$ is a node in $\Gamma_{i}$ and $(u, v)\in R$ are such that
  $u$ and $v$ both label paths starting at $\alpha$.  If $\alpha\in
    N_j$ for all $j \geq i$, then there exists $k\in \N$ such that $\alpha$
  is the minimum node to which \textbf{HLT1} is applied, and so
  \textbf{TC2} is applied to $\alpha$ and $(u, v)$ at some later step.
  Hence either the paths labelled by $u$ and $v$ starting at $\alpha$ end at the same node, or they end at distinct nodes $\beta$ and $\gamma$ such that $(\beta, \gamma)\in \kappa_{i}$. But \textbf{TC3} is applied at some later step, and so $\beta$ and $\gamma$ are identified in the corresponding quotient. Eventually the paths labelled by $u$ and $v$ starting at $\alpha$ end at the same node, i.e.\ $(u, v) \in (\alpha)\pi_{\Gamma_{i}}$ for sufficiently large $i$.
  \medskip

  \noindent \textbf{(e).}
  If $(\beta, \gamma)\in \kappa_{i}$ for some $i$, then there exists
  $j\geq i$ such that \textbf{TC3} is applied by the definition of \textbf{HLT2}. If we choose the smallest such $j$, then $\kappa_{i}\subseteq \kappa_{j}$ and so $(\beta, \gamma)\not\in \kappa_{j + 1}$.
\end{proof}

\begin{ex}
  \label{ex-tc-1}
  Let $M$ be the monoid defined in \cref{ex-cayley-digraph}.
  Then $M$ is isomorphic to the monoid defined by the presentation
  \[\langle a,b,c \; | \;
    ac = a^2,\  b^2= b,\  ca= a^2,\  cb =bc,\
    c^2=  a^2,\  a^3= a^2,\  aba= a^2
    \rangle.
  \]

  We will apply the HLT strategy to the presentation and the set $S = \{(a, b)\}$.
  In other words, we enumerate the least right congruence on the monoid $M$ containing $(a, b)$. This is the same right congruence that we enumerated in \cref{ex-cayley-digraph}.
  The input word graph $\Gamma_{1}$ is the trivial graph and the input $\kappa_{1}$ is $\Delta_{N_1}$. We suppose that $a<b<c$.
  \begin{description}
    \item[Step 1:] We apply \cref{de-2-sided-enumeration-process}(a). We add the nodes $1, 2 \in N_1$ and define the edges $(0, a, 1)$ and $(0, b, 2)$. Since $S = \{(a,b)\}$ this concludes \cref{de-2-sided-enumeration-process}(a). We continue by applying \textbf{TC2} to $0$ and $(a, b)$. Since $a$ and $b$ label paths from $0$ to the nodes $1$ and $2$, respectively, we define $\kappa_{2}$ to be the least equivalence containing $\kappa_{1} = \Delta_{N_1}$ and $(1, 2)$. The output is graph $\Gamma_{2}$ in \cref{fig-hlt-1}.

    \item[Step 2:] We continue with \cref{de-2-sided-enumeration-process}(b). We apply \textbf{TC3} and we get the graph $\Gamma_{3}$ in \cref{fig-hlt-2} which is the quotient of $\Gamma_{2}$ by $\kappa_{2}$. The set $\kappa_{3}$ is defined to be $\Delta_{N_3}$.

    \item[Step 3:] Now we are ready to apply \textbf{HLT1}. Since $0$ is the minimum node in $\Gamma_{3}$ such that $(ac, a ^2) \in R$ and $ac$ and $a^2$ do not label paths in $\Gamma_{3}$, we apply \textbf{TC1} until node $3$ and a path labelled by $a^2$ are defined and we continue by applying \textbf{TC2}(b) for $u = a^2, v = ac $ and $v_1 = a$. The output of this step is the graph $\Gamma_{4}$ in \cref{fig-hlt-3}.

    \item[Step 4-8:] We apply \textbf{HLT1} to the node $0$ and each of the relations $(b^2, b), (ca, a^2), (cb, bc), (c^2, a^2), (a^3, a^2)\in R$ (in this order) and the output $(\Gamma_{i}, \kappa_{i})$, $i = 5, \ldots, 9$ are shown in \cref{fig-hlt-4}, \cref{fig-hlt-5},
      \cref{fig-hlt-6}, \cref{fig-hlt-7}, and \cref{fig-hlt-8}.

    \item[Step 9:] In this step, we do not apply \textbf{HLT1} to $0$ and $(aba, a^2)$ since $aba$ and $a^2$ both label paths that reach the same node in $\Gamma_{9}$. We apply \textbf{HLT1} to the node $1$ and the relation $(ac, a^2) \in R$ and the output is graph $\Gamma_{10}$ in \cref{fig-hlt-9}.

    \item[Step 10:] We apply \textbf{HLT1} to the node $1$ and the relation $(cb, bc) \in R$ and the output is graph $\Gamma_{11}$ in \cref{fig-hlt-10}.

      After Step 10, $\Gamma_{11}$ is complete, deterministic, and compatible with $R$. Hence the enumeration terminates; see \cref{ex-cayley-digraph} for details of the enumerated congruence.
  \end{description}

  \begin{figure}
    \begin{subfigure}{0.33\textwidth}
      \centering
      
    \includegraphics{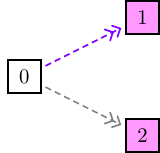}

      \caption{Step 1.}
      \label{fig-hlt-1}
    \end{subfigure}
    \begin{subfigure}{0.33\textwidth}
      \centering
      
    \includegraphics{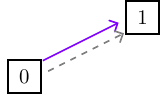}

      \caption{Step 2.}
      \label{fig-hlt-2}
    \end{subfigure}
    \begin{subfigure}{0.33\textwidth}
      \centering
      
    \includegraphics{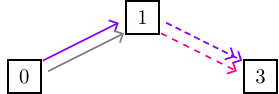}

      \caption{Step 3.}
      \label{fig-hlt-3}
    \end{subfigure}
    \vspace{\baselineskip}

    \begin{subfigure}{0.33\textwidth}
      \centering
      
    \includegraphics{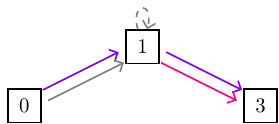}

      \caption{Step 4.}
      \label{fig-hlt-4}
    \end{subfigure}
    \begin{subfigure}{0.33\textwidth}
      \centering
      
    \includegraphics{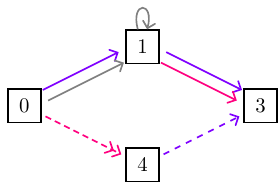}

      \caption{Step 5.}
      \label{fig-hlt-5}
    \end{subfigure}
    \begin{subfigure}{0.33\textwidth}
      \centering
      
    \includegraphics{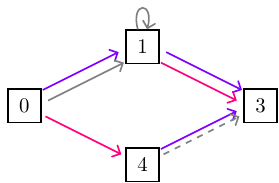}

      \caption{Step 6.}
      \label{fig-hlt-6}
    \end{subfigure}
    \vspace{\baselineskip}

    \begin{subfigure}{0.33\textwidth}
      \centering
      
    \includegraphics{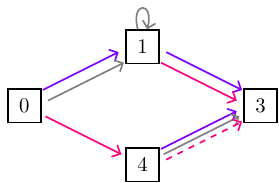}

      \caption{Step 7.}
      \label{fig-hlt-7}
    \end{subfigure}
    \begin{subfigure}{0.33\textwidth}
      \centering
      
    \includegraphics{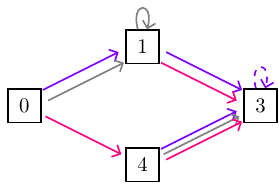}

      \caption{Step 8.}
      \label{fig-hlt-8}
    \end{subfigure}
    \begin{subfigure}{0.33\textwidth}
      \centering
      
    \includegraphics{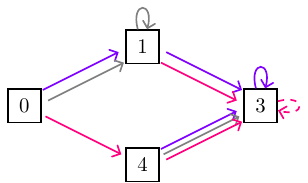}

      \caption{Step 9.}
      \label{fig-hlt-9}
    \end{subfigure}
    \vspace{\baselineskip}

    \centering
    \begin{subfigure}{0.33\textwidth}
      \centering
      
    \includegraphics{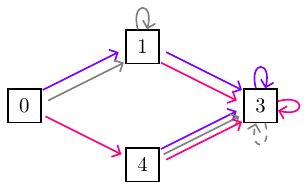}

      \caption{Step 10.}
      \label{fig-hlt-10}
    \end{subfigure}
    \caption{The output $(\Gamma_{i}, \kappa_{i})$ of each step in the HLT
      strategy in \cref{ex-tc-1}. Purple arrows correspond to $a$, gray to $b$, pink to $c$, shaded nodes of the same colour belong to $\kappa_{i}$, and unshaded nodes belong to singleton classes; see \cref{table-ex-tc-1b} for a representative word for each node.
      A dashed edge with a double arrowhead indicates the edge being defined in \textbf{TC1}, a dashed edge with a single arrowhead denotes an edge that is obtained from \textbf{TC2} or \textbf{TC3}, solid edges correspond to edges that existed at the previous step.}
    \label{fig-ex-tc-1b}
  \end{figure}
  \begin{table}\centering
    \begin{tabular}{l|l|l|l|l|l|l|l|l|l|l}
      0             & 1   & 2   & 3     & 4   \\ \hline
      $\varepsilon$ & $a$ & $b$ & $a^2$ & $c$
    \end{tabular}
    \caption{A word labelling a path from $0$ to each node in the right Cayley graph on the monoid $M$ from \cref{ex-tc-1}; see \cref{fig-ex-tc-1b}}
    \label{table-ex-tc-1b}
  \end{table}
\end{ex}

\section{The Felsch strategy}
\label{section-felsch}

In this section, we describe two versions of the so-called \textbf{\textit{Felsch strategy}} for
congruence
enumeration. For the record, this is Walker's Strategy 2
in~\cite{Walker1992aa}, and is referred to as C-style in
ACE~\cite{Havas1999aa} (``C'' for ``cosets'').

\subsection{First version}

The Felsch strategy starts with the trivial word graph $\Gamma_{1} = (N_{1}, E_{1})$ and with the trivial equivalence $\kappa_1 =\Delta_{N_1}$ on the nodes $N_1$ of $\Gamma_{1}$.
Just like the HLT strategy, the Felsch strategy starts by applying steps (a) and (b) from the definition of a congruence enumeration (\cref{de-2-sided-enumeration-process}).
We then repeatedly apply \textbf{TC3} until the resulting $\kappa$ is trivial.
The following steps are then repeatedly applied:
\begin{enumerate}
  \item[\textbf{F1.}]
    If $\alpha\in N_i$ is the minimum node in $\Gamma_{i}$ such that there exists $a\in A$ and there is no edge with source $\alpha$ labelled by $a$, then apply \textbf{TC1} to $\alpha$ and $a$.
    Apply \textbf{TC2} to every node in $\Gamma_{i}$ and every relation in $R$.
  \item[\textbf{F2.}]
    Apply \textbf{TC3} repeatedly until $\kappa_{i + 1}$ is trivial.
\end{enumerate}

It follows immediately from the definition of the Felsch strategy that it satisfies the definition of a congruence enumeration. To illustrate the Felsch strategy, we repeat the calculation from \cref{ex-tc-1}.

\begin{ex}
  \label{ex-felsch-1}
  Let $M$ be the monoid defined by defined in \cref{ex-cayley-digraph}.
  Then $M$ is isomorphic to the monoid defined by the presentation
  \[\langle a,b,c \; | \;
    ac = a^2,\  b^2= b,\  ca= a^2,\  cb =bc,\
    c^2=  a^2,\  a^3= a^2,\  aba= a^2
    \rangle.
  \]

  We will apply the Felsch strategy to the presentation (with $a<b<c$) and the set $S = \{(a, b)\}$.
  In other words, we enumerate the least right congruence on the monoid $M$ containing $(a, b)$. This is the same right congruence that we enumerated in \cref{ex-cayley-digraph}.
  The input word graph $\Gamma_{1}$ is the trivial graph and the input $\kappa_{1}$ is $\Delta_{N_1}$.

  \begin{description}
    \item[Step 1:] We apply \cref{de-2-sided-enumeration-process}(a). We add the nodes $1, 2 \in N_1$ and define the edges $(0, a, 1)$ and $(0, b, 2)$. Since $S = \{(a,b)\}$ this concludes \cref{de-2-sided-enumeration-process}(a). We continue by applying \textbf{TC2} to $0$ and $(a, b)$. Since $a$ and $b$ label paths from $0$ to the nodes $1$ and $2$, respectively, we define $\kappa_{2}$ to be the least equivalence containing $\kappa_{1} = \Delta_{N_1}$ and $(1, 2)$. The output is graph $\Gamma_{2}$ in \cref{fig-felsch-1}.

    \item[Step 2:] We continue with \cref{de-2-sided-enumeration-process}(b). We apply \textbf{TC3} and we get graph $\Gamma_{3}$ in \cref{fig-felsch-2} which is the quotient of $\Gamma_{2}$ by $\kappa_{2}$. The set $\kappa_{3}$ is defined to be $\Delta_{N_3}$

    \item[Step 3:] We apply \textbf{F1} and hence we apply \textbf{TC1} to $0$ and $c$. We add the node $3 \in N_4$ and define the edge $(0, c, 3)$. The output is graph $\Gamma_{4}$ in \cref{fig-felsch-3}.

    \item[Step 4:] We continue with the application of the second part of \textbf{F1} and hence we apply \textbf{TC2} to $1$ and $(b^2, b)$. The output is graph $\Gamma_{5}$ in \cref{fig-felsch-4}.

    \item[Step 5:] We apply \textbf{F1} and hence we apply \textbf{TC1} to $1$ and $a$. We add the node $4 \in N_6$ and define the edge $(1, a, 4)$. The output is graph $\Gamma_{6}$ in \cref{fig-felsch-5}.

    \item[Steps 6-12:] We continue with the application of the second part of \textbf{F1} and hence we apply \textbf{TC2} to $0$ and $(a^3, a^2)$, to $0$ and $(ac, a^2)$, to $1$ and $(ac, a^2)$, to $0$ and $(ca, a^2)$, to $0$ and $(c^2, a^2)$ and to $3$ and $(b^2, b)$.

     After Step 12, $\Gamma_{13}$ in \cref{fig-felsch-6} is complete, deterministic, and compatible with $R$. Hence the enumeration terminates; see \cref{ex-cayley-digraph} for details of the enumerated congruence.
  \end{description}

  \begin{figure}
    \begin{subfigure}{0.33\textwidth}
      \centering
      
    \includegraphics{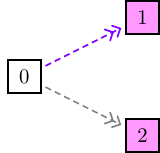}

      \caption{Step 1.}
      \label{fig-felsch-1}
    \end{subfigure}
    \begin{subfigure}{0.33\textwidth}
      \centering
      
    \includegraphics{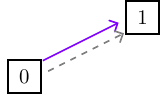}

      \caption{Step 2.}
      \label{fig-felsch-2}
    \end{subfigure}
    \begin{subfigure}{0.33\textwidth}
      \centering
      
    \includegraphics{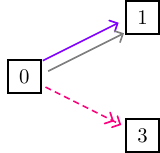}

      \caption{Step 3.}
      \label{fig-felsch-3}
    \end{subfigure}
    \begin{subfigure}{0.33\textwidth}
      \centering
      
    \includegraphics{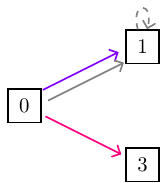}

      \caption{Step 4.}
      \label{fig-felsch-4}
    \end{subfigure}
    \begin{subfigure}{0.33\textwidth}
      \centering
      
    \includegraphics{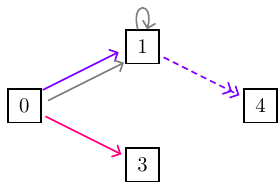}

      \caption{Step 5.}
      \label{fig-felsch-5}
    \end{subfigure}
    \begin{subfigure}{0.33\textwidth}
      \centering
      
    \includegraphics{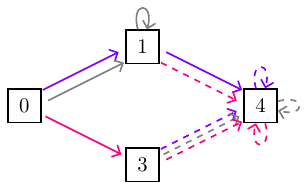}

      \caption{Steps 6-12.}
      \label{fig-felsch-6}
    \end{subfigure}

    \caption{The output $(\Gamma_{i}, \kappa_{i})$ of each step in the Felsch
      strategy in \cref{ex-felsch-1}. Purple arrows correspond to $a$, gray to $b$, pink to $c$, shaded nodes of the same colour belong to $\kappa_{i}$, and unshaded nodes belong to singleton classes; see \cref{table-ex-felsch-1} for a representative word for each node.
      A dashed edge with a double arrowhead indicates the edge being defined in \textbf{TC1}, a dashed edge with a single arrowhead denotes an edge that is obtained from \textbf{TC2} or \textbf{TC3}, solid edges correspond to edges that existed at the previous step.}
    \label{fig-ex-felsch-1b}
  \end{figure}
  \begin{table}\centering
    \begin{tabular}{l|l|l|l|l|l|l|l|l|l|l}
      0             & 1   & 2   & 3   & 4     \\ \hline
      $\varepsilon$ & $a$ & $b$ & $c$ & $a^2$
    \end{tabular}
    \caption{A word labelling a path from $0$ to each node in the right Cayley graph on the monoid $M$ from \cref{ex-felsch-1}; see \cref{fig-ex-felsch-1b}}
    \label{table-ex-felsch-1}
  \end{table}
\end{ex}
\subsection{Second version}

The purpose of \textbf{F2} in the Felsch stategy is to squeeze as much
information as possible out of every definition of an edge $(\alpha, a, \beta)$ made in
\textbf{F1}. The implementation of the Felsch strategy in
\cite{Mitchell2021aa} spends most of its time performing \textbf{TC2}.
In this section, we propose a means of reducing the number of times
\textbf{TC2} is applied in \textbf{F2} of the Felsch strategy. Roughly speaking,
we do this by only applying \textbf{TC2} to $(u, v) \in R$ and a node $\alpha$ in $\Gamma_{i}$
if the path in $\Gamma_{i}$ starting at $\alpha$ and labelled by $u$, or $v$, goes
through a part of $\Gamma_{i}$ that has recently changed.

To enable us to do this, we require a new set $D_i \subseteq N_i \times A$, in addition to
the word graph $\Gamma_{i}$ and equivalence relation $\kappa_{i}$, at every step of a congruence enumeration.  An element $(\alpha, a)$ of $D_i$ corresponds to a recently defined edge incident to $\alpha$ and labelled by $a$.  More precisely, we define
$D_1 = \varnothing$ and for $i\geq 1$ we define
\[
  D_{i+1} := \set{(\alpha, a)\in N_i\times A}{(\alpha, a, \beta)\in E_{i + 1}\setminus E_i}.
\]
In order to efficiently use the information recorded in $D_i$, we require the following definition.

\begin{de}
  If $\langle A | R\rangle$ is a finite monoid presentation, then we define the
  \defn{Felsch tree} $F(A, R)$ of this presentation to be a pair $(\Theta, \iota)$ where:
  \begin{enumerate}
    \item $\Theta= (N, E)$
          is a graph with
          nodes $N$ consisting of every (contiguous) subword $w\in A^ *$ of a word in a
          relation in $R$ and edges $(u, a, au)\in E$
          whenever $u, au \in N$ and $a\in A$; and
    \item a function $\iota$ from  $N$ to the power set
          $\mathfrak{P}(R)$ of $R$ such that $(u, v)\in (w)\iota$ whenever $w\in N$ is a prefix of
          $u$ or $v$.
  \end{enumerate}
\end{de}


\begin{ex}
  \label{ex-felsch-tree}
  If $A = \{a, b\}$ and $R = \{(a ^ 4, a), (b ^ 3, b), ((ab) ^ 2, a ^
    2)\}$, then the nodes in the word graph $\Theta$ in $F(A, R)$ are
  \[N = \{\varepsilon, a, b, a ^ 2, ab, ba, b ^ 2,
    a ^ 3, aba,bab, b ^ 3, a ^4, (ab) ^ 2 \}.\]
  A diagram of $\Theta$ can found in \cref{fig-felsch-tree}.
  \begin{figure}
    \centering
    
    \includegraphics{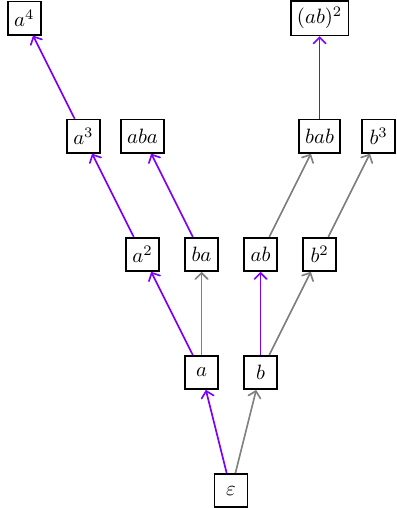}

    \caption{The Felsch tree $F(A, R)$ from \cref{ex-felsch-tree}.}
    \label{fig-felsch-tree}
  \end{figure}

  The function $\iota: N \to \mathfrak{P}(R)$ is:
  \begin{eqnarray*}
    (\varepsilon)\iota & = & R, \\
    (a)\iota  =  (a ^ 2)\iota & = & \{(a ^ 4, a), ((ab) ^ 2, a ^ 2)\}\\
    (a ^ 3)\iota  =  (a ^ 4)\iota & = & \{(a ^ 4, a)\}\\
    (aba)\iota & = & \{((ab) ^ 2, a ^ 2)\}\\
    (b)\iota = (b ^ 2)\iota = (b ^ 3)\iota & = &  \{ (b ^ 3, b)\}\\
    (ab)\iota = ((ab) ^ 2)\iota & = & \{((ab) ^ 2, a ^ 2)\} \\
    (ba)\iota = (bab)\iota & = & \varnothing.
  \end{eqnarray*}
\end{ex}


We modify
\textbf{TC1}, \textbf{TC2}, and \textbf{TC3} so that the set $D_{i}$
is defined appropriately at every step, and then, roughly speaking,
we replace \textbf{F2} in the Felsch strategy by a backtrack
search through $\Theta$ in $F(A, R)$ for every pair in $D_{i}$.  We will refer to this as the
\defn{modified Felsch strategy}.


If $\alpha$ is a node in $\Gamma_{i}$ and $v\in A ^ *$ is a node in $\Theta$, then we perform a backtrack search consisting of the following steps:
\begin{description}
  \item[PD1.]
    Apply \textbf{TC2} to $\alpha$ and every relation in $(v)\iota$.
  \item[PD2.]
    For every edge $(v, b, bv)$ in $\Theta$ that has not been traversed:
    for every node $\beta$ in $\Gamma_{i}$ such that $(\beta, b, \alpha) \in E_i$,
    apply \textbf{PD1} and \textbf{PD2} to $\beta$ and $bv$.
  \item[PD3.]
    Repeatedly apply \textbf{TC3} until $\kappa$ is trivial.
\end{description}

We will refer
 to either removing a pair from $D_i$ or performing the
 backtrack search as \textit{processing a deduction}.

The backtrack search is initiated for every $(\alpha, a) \in D_i$.
If $a$ is not a node in $\Theta$, then $a$ does not occur in any relation in $R$ and so following the path in $\Gamma_{i}$ labelled by any $u$ such that $(u, v) \in R$ from any node in $w$ in $\Gamma_{i}$ cannot contain any edge labelled by $a$. In particular,
no such path contains the edge $(\alpha, a, \beta)\in \Gamma_{i}$; the definition of which caused $(\alpha, a)$ to belong to $D_{i}$ in the first place. Hence if $(\alpha, a)\in D_i$, then there is a node $a$ in $\Theta$.

We repeatedly apply \textbf{PD1}, \textbf{PD2}, and
\textbf{PD3} starting from every pair in $D_{i}$.
This must terminate eventually because $\Theta$ is finite and we only apply \textbf{TC2} and \textbf{TC3} in \textbf{PD1}, \textbf{PD2}, and \textbf{PD3}.

To show that the modified Felsch strategy is a congruence enumeration, we require the following lemma.

\begin{lemma}\label{lemma-backtrack}
  Suppose that $\alpha$ is a node in $\Gamma_{j}$ for all $j\geq i$, and that $(u, v)\in R$ is such that $u$ and $v$ label paths $P_u$ and $P_v$ in $\Gamma_{i}$ starting at $\alpha$.
  If $(\beta, a) \in D_{i}$ for some $i$, and there is an edge $(\beta, a, \gamma)\in \Gamma_i$ in either $P_u$ or $P_v$, then $(u, v) \in (\alpha)\pi_{\Gamma_k}$ for some $k\geq i$.
\end{lemma}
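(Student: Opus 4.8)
The plan is to follow the backtrack search that the modified Felsch strategy initiates from the pair $(\beta, a) \in D_i$ and to show that it must, at some stage, apply \textbf{TC2} to $\alpha$ and $(u, v)$; the statement then follows by resolving the resulting coincidence with \textbf{TC3}. By the symmetry between $u$ and $v$ we may assume that the edge $(\beta, a, \gamma)$ lies on $P_u$. Fixing an occurrence of this edge on $P_u$, write $u = u_1 a u_2$, where $u_1$ labels the initial segment of $P_u$ from $\alpha$ to $\beta$ and $u_2$ labels the terminal segment from $\gamma$; in particular $u_1 a$ labels an $(\alpha, \gamma)$-path whose last edge is $(\beta, a, \gamma)$.

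First I would record the purely combinatorial facts about $\Theta$ that drive the search. Since $u_1 a$ is a prefix of the relation side $u$, every suffix of $u_1 a$ — in particular $u_1 a$ itself and each word obtained from $a$ by prepending the letters of $u_1$ one at a time — is a contiguous subword of a word in $R$, hence a node of $\Theta$; moreover the corresponding edges of $\Theta$, each of the form $(w, c, cw)$ prepending a single letter, are present. Thus there is a descending path in $\Theta$ from the node $a$ to the node $u_1 a$ that prepends the letters of $u_1$ in reverse order.

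The key step is to show, by induction on $|u_1|$, that the backtrack search run from $(\beta, a)$ applies \textbf{PD1} to the $\Gamma$-node $\alpha$ together with the $\Theta$-node $u_1 a$. The induction mirrors the reverse traversal of the $(\alpha, \beta)$-segment of $P_u$ against the descent in $\Theta$: from a configuration $(\delta, w)$ already reached by the search, \textbf{PD2} examines each unvisited tree edge $(w, c, cw)$ and each $\Gamma$-edge $(\delta', c, \delta)$, recursing to $(\delta', cw)$; choosing $c$ and $\delta'$ to match the next reverse step of $P_u$ advances the $\Theta$-node one letter closer to $u_1 a$ while moving the $\Gamma$-node one step closer to $\alpha$ (the base case $u_1 = \varepsilon$, where $\beta = \alpha$, is immediate). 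Once the search reaches the configuration $(\alpha, u_1 a)$, step \textbf{PD1} applies \textbf{TC2} to $\alpha$ and every relation in $(u_1 a)\iota$; since $u_1 a$ is a prefix of $u$, we have $(u, v) \in (u_1 a)\iota$, so \textbf{TC2} is applied to $\alpha$ and $(u, v)$. As both $u$ and $v$ label paths from $\alpha$, this is an instance of \textbf{TC2}(c) (or no change is needed), recording the coincidence of the two target nodes in $\kappa$; the subsequent \textbf{TC3} in \textbf{PD3} then identifies them, giving $(u, v) \in (\alpha)\pi_{\Gamma_k}$ for the resulting $\Gamma_k$.

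The main obstacle I anticipate is the bookkeeping that guarantees the relevant branch of the search is genuinely explored and that the argument survives the mutation of the word graph caused by the \textbf{TC2} and \textbf{TC3} operations performed en route. Two points need care. First, the edge $(\beta, a, \gamma)$ may occur several times along $P_u$, and a node of $\Theta$ may be reachable from several $\Gamma$-nodes (the reverse of a deterministic graph need not be deterministic), so one must check that the ``has not been traversed'' condition in \textbf{PD2} never prunes the particular branch that reverses $P_u$; here the tree structure of $\Theta$ — every node has a unique in-edge, since deleting the first letter is well defined — is what ensures that all relevant $\Gamma$-nodes are handled inside the single \textbf{PD2} call for each tree edge. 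Second, the intermediate applications of \textbf{TC2} and \textbf{TC3} only ever add edges or merge nodes, so by the monotonicity already established (paths are preserved by \textbf{TC1}, \textbf{TC2}, and under the quotient homomorphism of \textbf{TC3}) together with the hypothesis that $\alpha$ persists in every $\Gamma_j$ with $j \geq i$, the path $P_u$, the reverse traversal it supports, and the conclusion $(u, v) \in (\alpha)\pi_{\Gamma_k}$ are all preserved under these changes; should some mutation identify the targets of $u$ and $v$ earlier, the conclusion only holds sooner.
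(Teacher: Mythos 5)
Your proposal is correct and follows essentially the same route as the paper's proof: trace the backtrack search from $(\beta,a)$ up the Felsch tree, matching the reversal of the $(\alpha,\beta)$-segment of $P_u$ letter by letter (the paper does this by descending induction on the position $m$ of the marked edge, you by induction on $|u_1|$), conclude that \textbf{PD1} reaches $(\alpha, u_1a)$ so that \textbf{TC2} is applied to $\alpha$ and $(u,v)\in (u_1a)\iota$, and finish via \textbf{TC2}(c) and the quotient taken in \textbf{PD3}. Your closing remarks on the ``has not been traversed'' condition and on paths persisting under intermediate applications of \textbf{TC2}/\textbf{TC3} are points the paper's proof leaves implicit, and they are handled correctly.
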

\begin{proof}
  Suppose that $P_u$ consists of the edges $(\mu_1, b_1, \mu_2), \ldots, (\mu_{r}, b_r, \mu_{r + 1})$ where
  $\mu_1 = \alpha$ and $u = b_1\cdots b_r$. Similarly, suppose that $P_v$ consists of the edges $(\nu_1, c_1, \nu_2), \ldots, (\nu_s, c_s, \nu_{s + 1})$ where $\nu_1 = \alpha$  and $v = c_1\cdots c_s$. We may assume that $\mu_{r + 1} \neq \nu_{s + 1}$ and that there exists $t\in \{1, \ldots, r\}$ such that $(\mu_t, b_t, \mu_{t + 1}) = (\beta, a, \gamma)$.

  Since  $(\beta, a)\in D_{i}$,  the backtrack search in \textbf{PD1} and \textbf{PD2} will be applied to $(\beta, a)$ at some step $j\geq i$ of the enumeration. This search begins with \textbf{PD1}, where \textbf{TC2} is applied to $\beta$ and every relation in $(a)\iota$. At step \textbf{PD2} we apply \textbf{PD1} and \textbf{PD2}
  to every $\zeta\in N_i$ and $w$ such that there exists an edge of the form $(a, b, w)$ in $\Theta$ and an edge $(\zeta, b, \beta) \in E_i$.

  By definition, $u = b_1\cdots b_{t - 1}a b_{t + 1}\cdots b_r$.
  If $m\in \{1, \ldots, t\}$, then,
  since $(u, v) \in R$, $b_m\cdots b_{t - 1}a$ and $b_{m - 1}\cdots b_{t - 1}a$ are nodes of $\Theta$ and there exists an edge $(b_m\cdots b_{t - 1}a, b_{m - 1}, b_{m - 1}\cdots b_{t - 1}a)$ in $\Theta$. Also by assumption $(\mu_{m - 1}, b_{m - 1}, \mu_m) \in E_i$ and so
  $(\mu_{m - 1}, b_{m - 1}\cdots b_{t - 1}a)$ is one of the pairs in \textbf{PD2} if it is applied to
  $(\mu_m, b_{m}\cdots b_{t - 1}a)$. If $m = t$, then $(\mu_m, b_{m}\cdots b_{t - 1}a) = (\mu_t, a) = (\beta, a)$
  and hence \textbf{PD2} is applied to
  $(\mu_m, b_{m}\cdots b_{t - 1}a)$ for every $m\in \{1, \ldots, t\}$.
  In particular, \textbf{PD1} is applied to $(\mu_1, b_1\cdots b_{t - 1}a) = (\alpha, b_1\cdots b_{t - 1}a)$ at some point. This means that \textbf{TC2} is applied to $\alpha$ and every relation in $(b_1\cdots b_{t - 1}a)\iota$. Since $b_1\cdots b_{t - 1}a = b_1\cdots b_{t}$ is a prefix of $u$, it follows that $(u, v)\in (b_1\cdots b_{t - 1}a)\iota$ and so \textbf{TC2} is applied to $\alpha$ and $(u, v)$ at some step $j \geq i$. Because $u$ and $v$ were assumed to label paths originating at $\alpha$, neither \textbf{TC2}(a) nor (b) applies. Additionally, we assumed that $\mu_{r + 1}\neq \nu_{s + 1}$ and so
  \textbf{TC2}(c) is applied and  $(\mu_{r + 1}, \nu_{s + 1})\in \kappa_{i + 1}$.

  As noted above, $\Theta$ is finite, and \textbf{PD1} and \textbf{PD2} are only applied finitely many times before there is an application of \textbf{PD3}. If the first application of \textbf{TC3} (in \textbf{PD3}) after step $j$ occurs at step $k$, then $\Gamma_{k + 1}$ is the quotient of $\Gamma_{k}$ by $\kappa_{k}$ and $\kappa_{i}\subseteq \kappa_{k}$. Therefore $(u, v)\in (\alpha)\pi_{\Gamma_{k + 1}}$ as required.
\end{proof}

\begin{prop}\label{prop-modified-felsch-is-valid}
  If $\langle A| R\rangle$ is a finite monoid presentation, $S$ is a finite
  subset of $A ^ * \times A ^ *$, $R ^ {\#}$ is the least two-sided congruence on
  $A ^ *$ containing $R$, and $\rho$ is the least right congruence on $A ^ *$
  containing $S$ and $R ^ {\#}$, then the modified Felsch strategy applied to $(\Gamma_{1}, \kappa_{1})$, where
  $\Gamma_{1} = (N_1, E_1)$ is the trivial word graph and $\kappa_{1} = \Delta_{N_1}$, is a congruence enumeration.
\end{prop}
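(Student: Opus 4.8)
The plan is to check that the modified Felsch strategy meets the five conditions (a)--(e) of \cref{de-2-sided-enumeration-process}. Conditions (a) and (b) are literally the first two steps of the strategy, so nothing needs to be proved for them. For (e), every run of the backtrack search terminates with \textbf{PD3}, which repeatedly applies \textbf{TC3} until the current equivalence is trivial; since a word graph has only finitely many quotients this terminates, so whenever $\kappa_i \neq \Delta_{N_i}$ there is a later $j$ with $\kappa_j = \Delta_{N_j}$. For (c), suppose $\alpha \in N_i$ has no incident edge labelled by $a$. If $\alpha$ is eventually deleted we are done, so assume $\alpha$ persists. Since \textbf{F1} always applies \textbf{TC1} to the minimum node missing an edge, and since by \cref{lemma-stabilise} each of the finitely many nodes below $\alpha$ eventually stabilises, after finitely many applications of \textbf{F1} the node $\alpha$ becomes the minimum deficient node and an edge with source $\alpha$ and label $a$ is defined.

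Condition (d) is the heart of the matter, and is exactly what \cref{lemma-backtrack} is designed to supply. Fix $\alpha \in N_i$ and $(u, v) \in R$; if $\alpha$ is eventually deleted we are done, so assume $\alpha$ persists. First I would show, using (c) together with \cref{lemma-stabilise} and building the two paths one edge at a time, that there is some step at which $u$ and $v$ both label paths $P_u$ and $P_v$ starting at $\alpha$; let $k$ be the least such step. Since not both paths are present in $\Gamma_{k-1}$, the single application of \textbf{TC1}, \textbf{TC2}, or \textbf{TC3} producing $\Gamma_k$ must introduce an edge $(\beta, a, \gamma) \in E_k \setminus E_{k-1}$ that lies on $P_u$ or $P_v$; by the definition of $D_k$ this gives $(\beta, a) \in D_k$. \cref{lemma-backtrack} then yields $(u, v) \in (\alpha)\pi_{\Gamma_{k'}}$ for some $k' \geq k \geq i$, which is precisely condition (d).

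The main obstacle is therefore entirely contained in condition (d). In the unmodified Felsch strategy \textbf{TC2} is applied to every node and every relation at each stage, so (d) holds almost by inspection; in the modified strategy \textbf{TC2} is invoked only through the deductions stored in $D$, so one must be certain that the particular relation $(u, v)$ is genuinely revisited at $\alpha$. \cref{lemma-backtrack} discharges this, and the delicate point is to exhibit an edge of $P_u \cup P_v$ that has been recorded as a deduction. This succeeds because $D_{i+1}$ records \emph{every} edge in $E_{i+1} \setminus E_i$, including those that become new only through the relabelling of endpoints under \textbf{TC3}; the one place warranting extra care is verifying that the path-completing edge at step $k$ is captured in this way when $\Gamma_k$ arises from an application of \textbf{TC3}.
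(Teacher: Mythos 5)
Your proof is correct and follows essentially the same route as the paper's: the paper likewise reduces everything to condition (d) of \cref{de-2-sided-enumeration-process} (noting the modified strategy differs from the original Felsch strategy only in how \textbf{TC2} is invoked), takes the least step $k$ at which both $u$ and $v$ label paths from $\alpha$, observes that minimality forces some edge of $P_u$ or $P_v$ to lie in $E_{k+1}\setminus E_k$ and hence be recorded in $D_{k+1}$, and concludes via \cref{lemma-backtrack}. Your explicit checks of (c) and (e), and your remark that $D_{i+1}$ captures edges created by \textbf{TC3} as well as by \textbf{TC1} and \textbf{TC2}, are sound elaborations of points the paper leaves implicit, not a different argument.
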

\begin{proof}
  The only difference between the modified Felsch strategy and the original Felsch strategy is that after \textbf{F1}, \textbf{TC2} is only applied to particular nodes and relations. Hence, it suffices to show that \cref{de-2-sided-enumeration-process}(d) holds. Assume $\alpha \in N_i$ at some step $i$ in the congruence enumeration and let $(u, v) \in R$. In order for modified Felsch to be a congruence enumeration we need to show that there exists $j \geq i$ such that either $\alpha \notin N_j$ or $(u, v) \in (\alpha)\pi_{\Gamma_j}$.
  If $\alpha\notin N_j$ for some $j > i$, then $\alpha\notin N_k$ for all $k \geq j$
  (since new nodes introduced in \textbf{TC1} are  larger than all previous nodes).
  Hence it suffices to prove that if $\alpha$ is a node for all $j \geq i$, there exists some $k$ such that $(u,v) \in (\alpha)\pi_{\Gamma_k}$.
  Since \textbf{F1} is repeatedly applied there exists a step $k + 1$ when
  $\alpha$ is a node in $\Gamma_{k + 1}$ and paths $P_u$ and $P_v$ leaving $\alpha$ labelled by $u$ and $v$, respectively, exist in $\Gamma_{k + 1}$.
  Suppose that $k\in \N$ is the least value such that this holds. Then, by \cref{lemma-backtrack}, it suffices to show that there exists an edge $(\beta, a, \gamma)$ in either $P_u$ or $P_v$ that belongs to $E_{k + 1}\setminus E_k$  so that $(\beta, a)\in D_{k + 1}$. If every edge in $P_u$ and $P_v$ belongs to $E_k$, then $u$ and $v$ both label paths in $\Gamma_{k}$ starting at $\alpha$, which contradicts the assumed minimality of $k$.
\end{proof}

\section{Implementation issues}\label{section-implement}

In this section, we briefly address some issues relating to any implementation of the Todd-Coxeter algorithm for semigroups and monoids as described herein.

In some examples, it can be observed that the HLT
strategy defines many more nodes than the Felsch strategy.
One possible antidote to
this is to, roughly speaking, perform
\begin{quote}
  \textit{``periods of definition \textit{\`a la HLT} [that] alternate with
    periods of intensive scan \textit{\`a la Felsch}''}~\cite[p. 14]{Neubuser1982aa}.
\end{quote}
In both ACE~\cite{Havas1999aa} and \libsemigroups it is possible to specify the precise lengths
of the periods of applications of HLT and Felsch. As might be expected, some choices work better than others in particular examples, and this is difficult (or impossible) to predict in advance. It is routine to show that alternating between HLT and Felsch in this way still meets the definition of a congruence enumeration given in \cref{de-2-sided-enumeration-process}. Although we presented the HLT and Felsch strategies separately, it seems that some combination of the two sometimes offers better performance.

The next issue is: how to represent the equivalence relations $\kappa_i$?
A method suggested in~\cite[Section 4.6]{Sims1994aa}, which is now a standard approach for representing
equivalence relations, is to use the disjoint-sets data structure to represent the
least equivalence relation containing the pairs $(u, v)$ added to $\kappa_i$ in \textbf{TC2}(c)
or \textbf{TC3}.  The theoretical time complexity of updating the data structure to merge two classes, or to find a canonical representative of a class given another representative,  is $O(\alpha(m))$  time (in both the worst and the average case) and requires $O(m)$ space where $m$ is the number of elements in the underlying set and $\alpha$ is the inverse Ackermann function; see~\cite{Tarjan1975aa}.

Another issue that arises in the implementation is how to represent the word graphs $\Gamma_i$. In order to efficiently obtain the word graph $\Gamma_{i + 1}$ from $\Gamma_i$ in \textbf{TC3} it is necessary to keep track of both the edges with given source node, and with given target. This is more complex for semigroups and monoids than for groups, because a word graph $\Gamma=(N, E)$ ouput by a coset enumeration for a group has the property that for every $\beta\in N$ and every $a \in A$ there is exactly one $\alpha\in N$ such that $(\alpha, a, \beta)$ is an edge. As such if it ever arises that there are edges $(\alpha_1, a, \beta)$ and $(\alpha_2, a, \beta)$ in a word graph during a coset enumeration, the pair $(\alpha_1, \alpha_2)$ can immediately be added to $\kappa_i$. It is therefore possible to represent every word graph arising during a coset enumeration to have the property that $|\set{\alpha\in N}{(\alpha, a, \beta)\in E}| = 1$, which simplifies the data structure required to represent such a graph.

In contrast, if $\Gamma = (N, E)$ is a word graph arising in a congruence enumeration for a monoid, then
$|\set{\alpha\in N}{(\alpha, a, \beta)\in E}|$ can be as large as $|N|$. In practice, in \textbf{TC3} pairs of nodes belonging to $\kappa_i$ are merged successively.
A balance must be struck between repeatedly updating the data structure for the edges with given target in $\Gamma_{i + 1}$  or only retaining the edges with given source and rebuilding the data structure for the target edges later in the process. The former works better if the number of nodes in  $\Gamma_i$ is comparable to the number in $\Gamma_{i + 1}$, i.e.\ only a relatively small number of nodes are merged. On the other hand, if $\Gamma_{i + 1}$ is considerably smaller than $\Gamma_i$, then it can be significantly faster to do the latter.

Depending on the sequence of applications of \textbf{TC1}, \textbf{TC2}, and \textbf{TC3} in two successful congruence enumerations with the same input, the output word graphs may not be equal. However, the output word graphs are
unique up to isomorphism. \textit{Standardization} is a process for transforming a word graph into a standard form.
To discuss this we require the following notion.

If $A$ is any alphabet and $\preceq$ is a total order on $A ^ *$, then we say
that $\preceq$ is a \textbf{\textit{reduction ordering}} if $\preceq$ has no infinite
descending chains and if $u\preceq v$ for some $u, v\in A ^ *$, then $puq \preceq
  pvq$ for all $p, q\in A ^ *$. It follows from this definition that $\varepsilon$ is the $\preceq$-minimum word in $A ^ *$ for every reduction ordering on $A ^ *$.

If the set $A$ is totally ordered by $\leq$, then
we may extend $\leq$ to $\leq_{\text{lex}}$ over $A^*$, by defining $\varepsilon\leq_{\text{lex}} w$ for all $w\in A ^*$ and
$u\leq_{\text{lex}} v$ whenever $u = au_1$ and $v = bv_1$ for some $a, b\in A$ with $a<b$, or $a=b$ and $u_1 \leq_{\text{lex}} v_1$. This order is usually referred to as the \defn{lexicographic order} on $A ^ *$. Note that lexicographic order is not a reduction ordering. The \defn{short-lex order} $\leq_{\text{slex}}$ on $A ^ *$ is defined as follows: if $u, v \in A ^*$, then $u \leq_{\text{slex}} v$ if $|u| < |v|$ or $|u| = |v|$ and $u \leq_{\text{lex}} v$. It is straightforward to verify that the short-lex order on $A ^ *$ is a reduction ordering.
Further examples of reduction orderings on $A ^ *$ include recursive path
descent, as well as the wreath product of any finite collection of reduction orderings; see~\cite[Section 2.1]{Sims1994aa} for further details.

Suppose that $\preceq$ is a reduction ordering on $A ^ *$.
We will say that the word graph $\Gamma = (N, E)$ is \defn{standardized
  with respect to $\preceq$} if
$\alpha < \beta$ if and only if $w_{\alpha} \prec w_{\beta}$
for any $\alpha, \beta \in N$ where $w_{\alpha}, w_{\beta}\in A ^ *$  are
the $\preceq$-minimum words labelling $(0, \alpha)$- and $(0, \beta)$-paths, respectively.
Any process that transforms a word graph $\Gamma$ into a standardized word graph, is referred to as \defn{standardization}; for example, see \textsf{STANDARDIZE} in~\cite[p195]{Sims1994aa}.

A word graph $\Gamma_i$ can be replaced by any standardized word graph at any step of a congruence enumeration, provided that the values in $\kappa_i$ and $D_i$ are also updated accordingly.
In particular, it can be applied repeatedly during a congruence enumeration, or only at the end.
Standardization during a congruence enumeration can be very costly in the context of semigroups and monoids. However, it can also be somewhat beneficial in some examples.

The order of the definitions of new nodes in $\Gamma_{i + 1}$ in both \textbf{HLT1} and \textbf{F1}
depends on the numerical values of their source nodes. Hence replacing $\Gamma_i$ by a standardized word graph can change the order of these definitions, which in turn can influence the number of steps in the enumeration.  In an actual implementation, standardising a word graph is a rather complicated process that involves applying a permutation to the data structure representing the graph.

\section{Further variants}\label{section-variants}

In this section, we present some variants of the Todd-Coxeter algorithm that appear in the literature, which are used to compute special types of congruences (namely Rees congruences) and for computing congruences on finitely presented inverse monoids.

\subsection{Monoids with zero}

The first such variant is for finitely presented monoids with a zero element $0$.
For example, if $M$ is the monoid defined by the presentation
\[
  \langle a, b, 0 | ab = 0, a ^ 4 = a, b ^ 3 = b, (ab) ^ 2 = 0, a0=0a=0=b0=0b=0 ^2\rangle,
\]
then the relations $a0=0a=0=b0=0b=0 ^2$ indicate that $0$ is a zero element of $M$. Of course, the algorithms described above can be applied to this finite monoid presentation, as well as every other. On the other hand, the inclusion of the relations $a0=0a=0=b0=0b=0 ^2$ is rather cumbersome, and so we might rather write:
\[
  \langle a, b | ab = 0, a ^ 4 = a, b ^ 3 = b, (ab) ^ 2 = 0\rangle
\]
where the relations $a0=0a=0=b0=0b=0 ^2$ are implicit.
This is directly analogous to the implicit relations for the identity in a monoid presentation, and in a group presentation for inverses. We refer to such a presentation as a \defn{finite monoid-with-$0$ presentation}.
Both the HLT and Felsch strategies can be adapted  for monoid-with-$0$ presentations without much difficulty as follows.

Suppose that $\langle A | R\rangle$ is a finite monoid-with-$0$ presentation.
We refer to a word graph $\Gamma_i = (N_i, E_i)$ over $A \cup \{0\}$ with a distinguished node $\omega\in N_i$ such that the only edges with source $\omega$ are loops of the form $(\omega, a, \omega) \in E_i$ for all $a\in A$ and edges $(\alpha, 0, \omega)$ for all $\alpha \in N_i$ as a \defn{word graph-with-$0$}.
We augment \textbf{TC1} with the following step:
\begin{enumerate}
  \item[\textbf{Z:}]
    If $\beta$ is the new node introduced in \textbf{TC1}, then we  define the edge $(\beta, 0, \omega)$.
\end{enumerate}
Given a monoid-with-$0$ presentation,  it is routine to verify that if we perform any congruence enumeration (where \textbf{TC1} is augmented with \textbf{Z})  with input $(\Gamma_1, \kappa_1)$ where $\Gamma_1 = (N_1, E_1)$ is a word graph-with-$0$ and $\kappa_1 = \Delta_{N_1}$, then the conclusions in \cref{cor-tc-terminates} still hold.

\subsection{Rees congruences}
Following~\cite[Chapter 12]{Ruskuc1995aa}, we may extend the discussion of the previous section, to obtain a procedure for enumerating a left, right, or two-sided Rees congruence on a finitely presented monoid (with or without zero element).
If $I$ is a left, right, or two-sided ideal of the monoid $S$, then the \defn{Rees congruence} associated with $I$ is the congruence
$\Delta_{S} \cup (I\times I)$.
Such a procedure applies to a finite monoid presentation
$\langle A | R \rangle$ and set finite $S \subseteq A ^ * \times \{0\}$ (rather than $S \subseteq A ^ * \times A ^ *$ as in \cref{de-2-sided-enumeration-process}). The input to such an enumeration is a word graph-with-zero $\Gamma_1 = (N_1, E_1)$  and $\kappa_1 = \Delta_{N_1}$. The first steps are identical to those given in \cref{de-2-sided-enumeration-process}(a) and (b) except that \textbf{TC1} and \textbf{Z} are applied in part (a). The subsequent steps are just any sequence of applications of \textbf{TC1}+\textbf{Z}, \textbf{TC2}, and \textbf{TC3} satisfying \cref{de-2-sided-enumeration-process}(c), (d) and (e).
It follows immediately from \cref{validity}, and the validity of the congruence enumeration for monoid-with-$0$ presentations, that this process is valid.

\subsection{Stephen's procedure}

Another variant of the Todd-Coxeter algorithm is that of Stephen~\cite[Chapter 4]{Stephen1987aa}, mentioned in the introduction of the current article. Note that a similar method for constructing the Cayley graph of groups was described by Dehn~\cite{Dehn1911}.
Suppose that $M$ is the monoid defined by a finite monoid presentation $\langle A | R\rangle$ and that
$\Gamma=(N, E)$ is the right Cayley graph of $M$ with respect to $A$. If $w\in A ^ *$ is arbitrary and $w$ labels a $(0, \alpha)$-path in $\Gamma$ for some $\alpha \in N$, then
the aim of this variant is to output the subgraph $\Lambda$ of  $\Gamma$ induced by the set $X$ of nodes
in $N$ from which $\alpha$ is reachable. Note that the set of these nodes corresponds to the set of elements in $M$ which are $\geq_{\mathcal{R}} w/{R^{\#}}$ (recall that two monoid elements $s, t \in M$ satisfy $s \geq_{\mathcal{R}} t$ if $tM \subseteq sM$).
If $\mathcal{A}$ is the automata with alphabet $A$, state set $X$, initial state $0$, accept state $\alpha$, and edges consisting of those in $\Lambda$,
then the language $L(\mathcal{A})$ accepted by $\mathcal{A}$ is the set of words in $v\in A ^ *$ that represent the same element of $M$ as $w$ (i.e.\ $v/R ^ {\#} = w / R ^ {\#}$). As such if the (as yet to be described) procedure terminates, the output allows us to decide the word problem for $w$ in $M$.

Suppose that $w = a_1 \cdots a_n\in A ^ *$ for some $a_1, \ldots, a_n \in A$, that $\Gamma_1 = (N_1, E_1)$ is the trivial word graph, and $\kappa_1=\Delta_{N_1}$.
A special case of Stephen's procedure consists of the following steps described in terms of \textbf{TC1}, \textbf{TC2}, and \textbf{TC3}. The first step is always:
\begin{enumerate}
  \item[\textbf{S1:}] \textbf{TC1} is applied to $0$ and $a_1$, then to $i$ and $a_{i + 1}$ for every $i$ such that $1\leq i \leq n - 1$. The resulting $\Gamma_n$ consists of the single path from $0$ to the node $n$. (The graph $\Gamma_n$ is referred to as the \defn{linear graph of $w$} in~\cite{Stephen1987aa}.)
\end{enumerate}
\textbf{S1} is then followed by any sequence of the following steps:
\begin{enumerate}
  \item[\textbf{S2:}]
    At step $i$, suppose that the word graph $\Gamma_i = (N_i, E_i)$ contains a path with source $\alpha\in N_i$ labelled $u$ for some $(u, v) \in R$. If $v = v_1b$ where $v_1\in A ^ *$ and $b\in A$, then \textbf{TC1} is applied until there is a path with source $\alpha$ labelled by $v_1$
    and then \textbf{TC2} is applied to $\alpha$ and $(u, v)\in R$. (This is referred to as an \defn{elementary expansion} in~\cite{Stephen1987aa}.)
  \item[\textbf{S3:}]
    Apply \textbf{TC3}.
    (Quotienting $\Gamma_i$ by the least equivalence containing a single $(\alpha, \beta) \in N_i \times N_i$ is referred to as a \defn{determination} in~\cite{Stephen1987aa}. In \textbf{TC3} we quotient $\Gamma_i$ by the entire equivalence $\kappa_i$; this is the only point where the procedure described here differs from the description in~\cite{Stephen1987aa}.)
\end{enumerate}

It is shown in~\cite{Stephen1987aa} that if any sequence of \textbf{S2} and \textbf{S3} has the property that after finitely many steps any subsequent applications of \textbf{S2} and \textbf{S3} result in no changes to the output (i.e.\ $(\Gamma_{i + 1}, \kappa_{i + 1}) = (\Gamma_{i}, \kappa_{i})$), then $\Gamma_i$ is isomorphic to the induced subgraph $\Lambda$ defined at the start of this section. Note that \textbf{S2} and \textbf{S3} are similar to \textbf{HLT1}, \textbf{HLT2}, and \textbf{HLT3} described in \cref{section-hlt}. While it is not possible to use congruence enumeration, at least as described in this paper, to solve the word problem when the monoid $M$ defined by a presentation $\langle A | R \rangle$ is infinite, it is possible to decide whether or not two words $u, v\in A ^ *$ represent the same element of $M$ using the procedure defined in this section, whenever the induced subgraph $\Lambda$ is finite. Since the set of nodes in $\Lambda$ corresponds to the set of elements in $M$ which are $\geq_{\mathcal{R}} u/R^{\#}$ (assuming that $u$ is the input word for Stephen's procedure) the word graph $\Lambda$ is finite when there are only finitely many elements of $M$ that are $\geq _{\mathcal{R}} u/R^{\#}$.

%


\section*{Acknowledgements}
The authors would like to thank the referee for their careful reading of the paper, and for their helpful suggestions.
The third author was supported by the EPSRC doctoral training partnership number EP/N509759/1 when working on this project. 
The fourth author would like to thank the School of Mathematics and Statistics of the University of St Andrews and the Cyprus State Scholarship Foundation for their financial support.


\printbibliography

@article{Cain2009aa,
	author = {Alan J. Cain and Victor Maltcev},
	doi = {10.1142/s0218196709005366},
	journal = {International Journal of Algebra and Computation},
	month = sep,
	number = {06},
	pages = {747--770},
	publisher = {World Scientific Pub Co Pte Lt},
	title = {Decision problems for finitely presented and one-relation semigroups and monoids},
	url = {https://doi.org/10.1142/s0218196709005366},
	volume = {19},
	year = {2009},
	bdsk-url-1 = {https://doi.org/10.1142/s0218196709005366}}

@article{Tarjan1975aa,
	author = {Robert Endre Tarjan},
	doi = {10.1145/321879.321884},
	journal = {Journal of the {ACM}},
	month = apr,
	number = {2},
	pages = {215--225},
	publisher = {Association for Computing Machinery ({ACM})},
	title = {Efficiency of a Good But Not Linear Set Union Algorithm},
	volume = {22},
	year = {1975},
	bdsk-url-1 = {https://doi.org/10.1145/321879.321884}}

@article{Boone1958,
	author = {W. W. Boone},
	doi = {10.1073/pnas.44.10.1061},
	journal = {Proceedings of the National Academy of Sciences},
	month = oct,
	number = {10},
	pages = {1061--1065},
	publisher = {Proceedings of the National Academy of Sciences},
	title = {The word problem},
	volume = {44},
	year = {1958},
	bdsk-url-1 = {https://doi.org/10.1073/pnas.44.10.1061}}

@article{Novikov1955aa,
	author = {P. S. {Novikov}},
	journal = {Tr. Mat. Inst. Steklova},
	language = {Russian},
	number = {140},
	title = {On the algorithmic unsolvability of the word problem in group theory},
	volume = {44},
	year = {1955},
	zbl = {0068.01301}}

@article{Stephen1990aa,
	author = {J.B. Stephen},
	journal = {Journal of Pure and Applied Algebra},
	number = {1},
	pages = {81-112},
	title = {Presentations of inverse monoids},
	volume = {63},
	year = {1990}}

@phdthesis{Stephen1987aa,
	author = {Stephen, J. B.},
	journal = {AAI8803771},
	publisher = {ETD collection for University of Nebraska - Lincoln},
	school = {University of Nebraska},
	title = {Applications of automata theory to presentations of monoids and inverse monoids},
	url = {https://digitalcommons.unl.edu/dissertations/AAI8803771},
	year = {1987},
	bdsk-url-1 = {https://digitalcommons.unl.edu/dissertations/AAI8803771}}

@misc{Mitchell2021aa,
	author = {Mitchell, J. D.},
	howpublished = {\url{{https://gist.github.com/james-d-mitchell/6b06bc78e2bcdb6dfef53a2654d9f953}}},
	journal = {GitHub repository},
	publisher = {GitHub},
	title = {An implementation of the {T}odd-{C}oxeter {A}lgorithm for {S}emigroups and {M}onoids in python3},
	year = {2021}}

@article{Linton1991aa,
	author = {Linton, S. A.},
	date-added = {2020-02-19 15:11:01 +0000},
	date-modified = {2020-02-19 15:11:18 +0000},
	doi = {10.1016/S0747-7171(08)80095-8},
	fjournal = {Journal of Symbolic Computation},
	issn = {0747-7171},
	journal = {J. Symbolic Comput.},
	mrclass = {20C40},
	mrnumber = {1146508},
	mrreviewer = {Gerhard Hiss},
	note = {Computational group theory, Part 2},
	number = {4-5},
	pages = {427--438},
	publisher = {Elsevier},
	title = {Constructing matrix representations of finitely presented groups},
	volume = {12},
	year = {1991},
	bdsk-url-1 = {https://doi.org/10.1016/S0747-7171(08)80095-8}}

@article{Linton1993aa,
	author = {Linton, S. A.},
	date-added = {2020-02-19 15:10:57 +0000},
	date-modified = {2020-02-19 15:11:18 +0000},
	doi = {10.1016/0024-3795(93)90245-J},
	fjournal = {Linear Algebra and its Applications},
	issn = {0024-3795},
	journal = {Linear Algebra Appl.},
	mrclass = {20C40 (16-04 20-04 68Q20)},
	mrnumber = {1236745},
	mrreviewer = {Herbert Pahlings},
	note = {Computational linear algebra in algebraic and related problems (Essen, 1992)},
	pages = {235--248},
	title = {On vector enumeration},
	volume = {192},
	year = {1993},
	bdsk-file-1 = {YnBsaXN0MDDSAQIDBFxyZWxhdGl2ZVBhdGhYYm9va21hcmtfEFguLi8uLi8uLi8uLi9MaWJyYXJ5L01vYmlsZSBEb2N1bWVudHMvY29tfmFwcGxlfkNsb3VkRG9jcy9NYXRocy9CaWJ0ZXgvRmlsZWQvODIyNjEzNDUucGRmTxEE1GJvb2vUBAAAAAAEEDAAAAAAAAAAAAAAAAAAAAAAAAAAAAAAAAAAAAAAAAAAAAAAAMQDAAAFAAAAAQEAAFVzZXJzAAAAAwAAAAEBAABqZG0ABwAAAAEBAABMaWJyYXJ5ABAAAAABAQAATW9iaWxlIERvY3VtZW50cxMAAAABAQAAY29tfmFwcGxlfkNsb3VkRG9jcwAFAAAAAQEAAE1hdGhzAAAABgAAAAEBAABCaWJ0ZXgAAAUAAAABAQAARmlsZWQAAAAMAAAAAQEAADgyMjYxMzQ1LnBkZiQAAAABBgAABAAAABQAAAAgAAAAMAAAAEgAAABkAAAAdAAAAIQAAACUAAAACAAAAAQDAABvWAAAAAAAAAgAAAAEAwAAmF49AAAAAAAIAAAABAMAAKJePQAAAAAACAAAAAQDAACapD0AAAAAAAgAAAAEAwAAkrg9AAAAAAAIAAAABAMAAL/+QAAAAAAACAAAAAQDAABnA0EAAAAAAAgAAAAEAwAAiw5BAAAAAAAIAAAABAMAAGLEQgAAAAAAJAAAAAEGAADUAAAA5AAAAPQAAAAEAQAAFAEAACQBAAA0AQAARAEAAFQBAAAIAAAAAAQAAEHB/gcIgAAAGAAAAAECAAABAAAAAAAAAA8AAAAAAAAAAAAAAAAAAAAIAAAABAMAAAcAAAAAAAAABAAAAAMDAAD3AQAACAAAAAEJAABmaWxlOi8vLwwAAAABAQAATWFjaW50b3NoIEhECAAAAAQDAAAAUKEbcwAAAAgAAAAABAAAQcUHeqAAAAAkAAAAAQEAAEVDNjY5MzFELUM3QjktNEQzOS04MUI4LTBENUM5NzRCMEMxQxgAAAABAgAAgQAAAAEAAADvEwAAAQAAAAAAAAAAAAAAAQAAAAEBAAAvAAAAAAAAAAEFAAAaAAAAAQEAAE5TVVJMRG9jdW1lbnRJZGVudGlmaWVyS2V5AAAEAAAAAwMAAARzAQAJAQAAAQIAAGIyYTZkNDYwOTExOTExYzU0MWZkY2UzNTZhZDQ0YzU5ZDYyZDY4OWIyNzU5NWY3MzIyZjM4YWNjZTc3ZDM2ODQ7MDA7MDAwMDAwMDA7MDAwMDAwMDA7MDAwMDAwMDA7MDAwMDAwMDAwMDAwMDAyMDtjb20uYXBwbGUuYXBwLXNhbmRib3gucmVhZC13cml0ZTswMTswMTAwMDAxMDswMDAwMDAwMDAwNDJjNDYyOzAxOy91c2Vycy9qZG0vbGlicmFyeS9tb2JpbGUgZG9jdW1lbnRzL2NvbX5hcHBsZX5jbG91ZGRvY3MvbWF0aHMvYmlidGV4L2ZpbGVkLzgyMjYxMzQ1LnBkZgAAAADYAAAA/v///wEAAAAAAAAAEQAAAAQQAACoAAAAAAAAAAUQAABkAQAAAAAAABAQAACgAQAAAAAAAEAQAACQAQAAAAAAAAIgAABsAgAAAAAAAAUgAADcAQAAAAAAABAgAADsAQAAAAAAABEgAAAgAgAAAAAAABIgAAAAAgAAAAAAABMgAAAQAgAAAAAAACAgAABMAgAAAAAAADAgAAB4AgAAAAAAAAHAAADAAQAAAAAAABHAAAAUAAAAAAAAABLAAADQAQAAAAAAAIDwAACwAgAAAAAAAIACAICkAgAAAAAAAAAIAA0AGgAjAH4AAAAAAAACAQAAAAAAAAAFAAAAAAAAAAAAAAAAAAAFVg==},
	bdsk-url-1 = {https://mathscinet.ams.org/mathscinet-getitem?mr=1236745},
	bdsk-url-2 = {https://doi.org/10.1016/0024-3795(93)90245-J}}

@incollection{Froidure1997aa,
	address = {Berlin},
	author = {Froidure, V{\'e}ronique and Pin, Jean-Eric},
	booktitle = {Foundations of computational mathematics ({R}io de {J}aneiro, 1997)},
	date-added = {2019-09-10 16:21:59 +0100},
	date-modified = {2019-09-10 16:21:59 +0100},
	mrclass = {20M10},
	mrnumber = {MR1661975 (99k:20111)},
	mrreviewer = {Jorge Almeida},
	pages = {112--126},
	publisher = {Springer},
	title = {Algorithms for computing finite semigroups},
	year = {1997},
	bdsk-file-1 = {YnBsaXN0MDDSAQIDBFxyZWxhdGl2ZVBhdGhYYm9va21hcmtfEFguLi8uLi8uLi8uLi9MaWJyYXJ5L01vYmlsZSBEb2N1bWVudHMvY29tfmFwcGxlfkNsb3VkRG9jcy9NYXRocy9CaWJ0ZXgvRmlsZWQvMTY2MTk3NTAucGRmTxEE1GJvb2vUBAAAAAAEEDAAAAAAAAAAAAAAAAAAAAAAAAAAAAAAAAAAAAAAAAAAAAAAAMQDAAAFAAAAAQEAAFVzZXJzAAAAAwAAAAEBAABqZG0ABwAAAAEBAABMaWJyYXJ5ABAAAAABAQAATW9iaWxlIERvY3VtZW50cxMAAAABAQAAY29tfmFwcGxlfkNsb3VkRG9jcwAFAAAAAQEAAE1hdGhzAAAABgAAAAEBAABCaWJ0ZXgAAAUAAAABAQAARmlsZWQAAAAMAAAAAQEAADE2NjE5NzUwLnBkZiQAAAABBgAABAAAABQAAAAgAAAAMAAAAEgAAABkAAAAdAAAAIQAAACUAAAACAAAAAQDAABvWAAAAAAAAAgAAAAEAwAAmF49AAAAAAAIAAAABAMAAKJePQAAAAAACAAAAAQDAACapD0AAAAAAAgAAAAEAwAAkrg9AAAAAAAIAAAABAMAAL/+QAAAAAAACAAAAAQDAABnA0EAAAAAAAgAAAAEAwAAiw5BAAAAAAAIAAAABAMAAE1FSQAAAAAAJAAAAAEGAADUAAAA5AAAAPQAAAAEAQAAFAEAACQBAAA0AQAARAEAAFQBAAAIAAAAAAQAAEG0X+qAAAAAGAAAAAECAAABAAAAAAAAAA8AAAAAAAAAAAAAAAAAAAAIAAAABAMAAAcAAAAAAAAABAAAAAMDAAD3AQAACAAAAAEJAABmaWxlOi8vLwwAAAABAQAATWFjaW50b3NoIEhECAAAAAQDAAAAUKEbcwAAAAgAAAAABAAAQcUHeqAAAAAkAAAAAQEAAEVDNjY5MzFELUM3QjktNEQzOS04MUI4LTBENUM5NzRCMEMxQxgAAAABAgAAgQAAAAEAAADvEwAAAQAAAAAAAAAAAAAAAQAAAAEBAAAvAAAAAAAAAAEFAAAaAAAAAQEAAE5TVVJMRG9jdW1lbnRJZGVudGlmaWVyS2V5AAAEAAAAAwMAAOdsAQAJAQAAAQIAADMwYjE0NmM4NjkyZTcwYzAwNGFkMjY1ODhiYTI2YzkwMGY0MGVhMWZkMzJhYTgwNTIyYjY0NjBiMmNjNDQ2OTA7MDA7MDAwMDAwMDA7MDAwMDAwMDA7MDAwMDAwMDA7MDAwMDAwMDAwMDAwMDAyMDtjb20uYXBwbGUuYXBwLXNhbmRib3gucmVhZC13cml0ZTswMTswMTAwMDAxMDswMDAwMDAwMDAwNDk0NTRkOzAxOy91c2Vycy9qZG0vbGlicmFyeS9tb2JpbGUgZG9jdW1lbnRzL2NvbX5hcHBsZX5jbG91ZGRvY3MvbWF0aHMvYmlidGV4L2ZpbGVkLzE2NjE5NzUwLnBkZgAAAADYAAAA/v///wEAAAAAAAAAEQAAAAQQAACoAAAAAAAAAAUQAABkAQAAAAAAABAQAACgAQAAAAAAAEAQAACQAQAAAAAAAAIgAABsAgAAAAAAAAUgAADcAQAAAAAAABAgAADsAQAAAAAAABEgAAAgAgAAAAAAABIgAAAAAgAAAAAAABMgAAAQAgAAAAAAACAgAABMAgAAAAAAADAgAAB4AgAAAAAAAAHAAADAAQAAAAAAABHAAAAUAAAAAAAAABLAAADQAQAAAAAAAIDwAACwAgAAAAAAAIACAICkAgAAAAAAAAAIAA0AGgAjAH4AAAAAAAACAQAAAAAAAAAFAAAAAAAAAAAAAAAAAAAFVg==},
	bdsk-file-2 = {YnBsaXN0MDDSAQIDBFxyZWxhdGl2ZVBhdGhYYm9va21hcmtfEFMuLi8uLi8uLi8uLi9MaWJyYXJ5L01vYmlsZSBEb2N1bWVudHMvY29tfmFwcGxlfkNsb3VkRG9jcy9NYXRocy9CaWJ0ZXgvRmlsZWQvUmlvLnBkZk8RBMhib29ryAQAAAAABBAwAAAAAAAAAAAAAAAAAAAAAAAAAAAAAAAAAAAAAAAAAAAAAAC4AwAABQAAAAEBAABVc2VycwAAAAMAAAABAQAAamRtAAcAAAABAQAATGlicmFyeQAQAAAAAQEAAE1vYmlsZSBEb2N1bWVudHMTAAAAAQEAAGNvbX5hcHBsZX5DbG91ZERvY3MABQAAAAEBAABNYXRocwAAAAYAAAABAQAAQmlidGV4AAAFAAAAAQEAAEZpbGVkAAAABwAAAAEBAABSaW8ucGRmACQAAAABBgAABAAAABQAAAAgAAAAMAAAAEgAAABkAAAAdAAAAIQAAACUAAAACAAAAAQDAABvWAAAAAAAAAgAAAAEAwAAmF49AAAAAAAIAAAABAMAAKJePQAAAAAACAAAAAQDAACapD0AAAAAAAgAAAAEAwAAkrg9AAAAAAAIAAAABAMAAL/+QAAAAAAACAAAAAQDAABnA0EAAAAAAAgAAAAEAwAAiw5BAAAAAAAIAAAABAMAAAJRRQAAAAAAJAAAAAEGAADQAAAA4AAAAPAAAAAAAQAAEAEAACABAAAwAQAAQAEAAFABAAAIAAAAAAQAAEG7Oa3sAAAAGAAAAAECAAABAAAAAAAAAA8AAAAAAAAAAAAAAAAAAAAIAAAABAMAAAcAAAAAAAAABAAAAAMDAAD3AQAACAAAAAEJAABmaWxlOi8vLwwAAAABAQAATWFjaW50b3NoIEhECAAAAAQDAAAAUKEbcwAAAAgAAAAABAAAQcUHeqAAAAAkAAAAAQEAAEVDNjY5MzFELUM3QjktNEQzOS04MUI4LTBENUM5NzRCMEMxQxgAAAABAgAAgQAAAAEAAADvEwAAAQAAAAAAAAAAAAAAAQAAAAEBAAAvAAAAAAAAAAEFAAAaAAAAAQEAAE5TVVJMRG9jdW1lbnRJZGVudGlmaWVyS2V5AAAEAAAAAwMAAKVzAQAEAQAAAQIAADI2MjNjNjJmNzI0ZThmYWU3ZjMwMGUzMmE4Zjg4MDdkODk1NTVmNTQyMDBiMDBhMTM3NGRkZWJiZjllZWJiZWE7MDA7MDAwMDAwMDA7MDAwMDAwMDA7MDAwMDAwMDA7MDAwMDAwMDAwMDAwMDAyMDtjb20uYXBwbGUuYXBwLXNhbmRib3gucmVhZC13cml0ZTswMTswMTAwMDAxMDswMDAwMDAwMDAwNDU1MTAyOzAxOy91c2Vycy9qZG0vbGlicmFyeS9tb2JpbGUgZG9jdW1lbnRzL2NvbX5hcHBsZX5jbG91ZGRvY3MvbWF0aHMvYmlidGV4L2ZpbGVkL3Jpby5wZGYA2AAAAP7///8BAAAAAAAAABEAAAAEEAAApAAAAAAAAAAFEAAAYAEAAAAAAAAQEAAAnAEAAAAAAABAEAAAjAEAAAAAAAACIAAAaAIAAAAAAAAFIAAA2AEAAAAAAAAQIAAA6AEAAAAAAAARIAAAHAIAAAAAAAASIAAA/AEAAAAAAAATIAAADAIAAAAAAAAgIAAASAIAAAAAAAAwIAAAdAIAAAAAAAABwAAAvAEAAAAAAAARwAAAFAAAAAAAAAASwAAAzAEAAAAAAACA8AAArAIAAAAAAAB8AgCAoAIAAAAAAAAACAANABoAIwB5AAAAAAAAAgEAAAAAAAAABQAAAAAAAAAAAAAAAAAABUU=}}

@book{Howie1995aa,
	address = {New York},
	author = {Howie, John M.},
	date-added = {2019-09-10 16:16:05 +0100},
	date-modified = {2019-09-10 16:16:05 +0100},
	isbn = {0-19-851194-9},
	mrclass = {20Mxx (20-02)},
	mrnumber = {MR1455373 (98e:20059)},
	mrreviewer = {P. M. Higgins},
	note = {Oxford Science Publications},
	pages = {x+351},
	publisher = {The Clarendon Press Oxford University Press},
	series = {London Mathematical Society Monographs. New Series},
	title = {Fundamentals of semigroup theory},
	volume = {12},
	year = {1995},
	bdsk-file-1 = {YnBsaXN0MDDSAQIDBFxyZWxhdGl2ZVBhdGhYYm9va21hcmtfEG0uLi8uLi8uLi8uLi9MaWJyYXJ5L01vYmlsZSBEb2N1bWVudHMvY29tfmFwcGxlfkNsb3VkRG9jcy9NYXRocy9CaWJ0ZXgvRmlsZWQvSG93aWVfRnVuZGFtZW50YWxzT2ZTZW1pZy4uMC5kanZ1TxEFAGJvb2sABQAAAAAEEDAAAAAAAAAAAAAAAAAAAAAAAAAAAAAAAAAAAAAAAAAAAAAAAPADAAAFAAAAAQEAAFVzZXJzAAAAAwAAAAEBAABqZG0ABwAAAAEBAABMaWJyYXJ5ABAAAAABAQAATW9iaWxlIERvY3VtZW50cxMAAAABAQAAY29tfmFwcGxlfkNsb3VkRG9jcwAFAAAAAQEAAE1hdGhzAAAABgAAAAEBAABCaWJ0ZXgAAAUAAAABAQAARmlsZWQAAAAhAAAAAQEAAEhvd2llX0Z1bmRhbWVudGFsc09mU2VtaWcuLjAuZGp2dQAAACQAAAABBgAABAAAABQAAAAgAAAAMAAAAEgAAABkAAAAdAAAAIQAAACUAAAACAAAAAQDAABvWAAAAAAAAAgAAAAEAwAAmF49AAAAAAAIAAAABAMAAKJePQAAAAAACAAAAAQDAACapD0AAAAAAAgAAAAEAwAAkrg9AAAAAAAIAAAABAMAAL/+QAAAAAAACAAAAAQDAABnA0EAAAAAAAgAAAAEAwAAiw5BAAAAAAAIAAAABAMAAHPyFgEAAAAAJAAAAAEGAADsAAAA/AAAAAwBAAAcAQAALAEAADwBAABMAQAAXAEAAGwBAAAIAAAAAAQAAEG0X+p9AAAAGAAAAAECAAABAAAAAAAAAA8AAAAAAAAAAAAAAAAAAAAIAAAABAMAAAcAAAAAAAAABAAAAAMDAAD3AQAACAAAAAEJAABmaWxlOi8vLwwAAAABAQAATWFjaW50b3NoIEhECAAAAAQDAAAAUKEbcwAAAAgAAAAABAAAQcUHeqAAAAAkAAAAAQEAAEVDNjY5MzFELUM3QjktNEQzOS04MUI4LTBENUM5NzRCMEMxQxgAAAABAgAAgQAAAAEAAADvEwAAAQAAAAAAAAAAAAAAAQAAAAEBAAAvAAAAAAAAAAEFAAAaAAAAAQEAAE5TVVJMRG9jdW1lbnRJZGVudGlmaWVyS2V5AAAEAAAAAwMAAJpzAQAeAQAAAQIAADllMjRhZTZlYmFmOWJhZjk4MjljMDIzNmU4MjczNGFkYjliZDJjMTA1ZTYxZWYzMWQwOTVmMWNmMjczZGQzZmY7MDA7MDAwMDAwMDA7MDAwMDAwMDA7MDAwMDAwMDA7MDAwMDAwMDAwMDAwMDAyMDtjb20uYXBwbGUuYXBwLXNhbmRib3gucmVhZC13cml0ZTswMTswMTAwMDAxMDswMDAwMDAwMDAxMTZmMjczOzAxOy91c2Vycy9qZG0vbGlicmFyeS9tb2JpbGUgZG9jdW1lbnRzL2NvbX5hcHBsZX5jbG91ZGRvY3MvbWF0aHMvYmlidGV4L2ZpbGVkL2hvd2llX2Z1bmRhbWVudGFsc29mc2VtaWcuLjAuZGp2dQAAANgAAAD+////AQAAAAAAAAARAAAABBAAAMAAAAAAAAAABRAAAHwBAAAAAAAAEBAAALgBAAAAAAAAQBAAAKgBAAAAAAAAAiAAAIQCAAAAAAAABSAAAPQBAAAAAAAAECAAAAQCAAAAAAAAESAAADgCAAAAAAAAEiAAABgCAAAAAAAAEyAAACgCAAAAAAAAICAAAGQCAAAAAAAAMCAAAJACAAAAAAAAAcAAANgBAAAAAAAAEcAAABQAAAAAAAAAEsAAAOgBAAAAAAAAgPAAAMgCAAAAAAAAmAIAgLwCAAAAAAAAAAgADQAaACMAkwAAAAAAAAIBAAAAAAAAAAUAAAAAAAAAAAAAAAAAAAWX}}

@inproceedings{Neubuser1982aa,
	author = {Neub\"user, J.},
	booktitle = {Groups - St Andrews 1981},
	collection = {London Mathematical Society Lecture Notes Series},
	date-added = {2019-06-24 15:59:08 +0100},
	date-modified = {2019-06-24 15:59:10 +0100},
	doi = {10.1017/CBO9780511661884.004},
	editor = {Campbell, C. M. and Robertson, E. F.},
	pages = {1--45},
	place = {Cambridge},
	publisher = {Cambridge University Press},
	series = {London Mathematical Society Lecture Notes Series},
	title = {An elementary introduction to coset table methods in computational group theory},
	year = {1982},
	bdsk-url-1 = {https://doi.org/10.1017/CBO9780511661884.004}}

@article{Cannon1973aa,
	author = {Cannon, John J. and Dimino, Lucien A. and Havas, George and Watson, Jane M.},
	date-added = {2019-05-14 13:39:48 +0100},
	date-modified = {2019-05-14 13:40:48 +0100},
	doi = {10.2307/2005654},
	fjournal = {Mathematics of Computation},
	issn = {0025-5718},
	journal = {Math. Comp.},
	mrclass = {20-04},
	mrnumber = {0335610},
	mrreviewer = {Hale F. Trotter},
	pages = {463--490},
	title = {Implementation and analysis of the {T}odd-{C}oxeter algorithm},
	volume = {27},
	year = {1973},
	bdsk-file-1 = {YnBsaXN0MDDSAQIDBFxyZWxhdGl2ZVBhdGhYYm9va21hcmtfEGkuLi8uLi8uLi8uLi9MaWJyYXJ5L01vYmlsZSBEb2N1bWVudHMvY29tfmFwcGxlfkNsb3VkRG9jcy9NYXRocy9CaWJ0ZXgvRmlsZWQvUzAwMjUtNTcxOC0xOTczLTAzMzU2MTAtNS5wZGZPEQT4Ym9va/gEAAAAAAQQMAAAAAAAAAAAAAAAAAAAAAAAAAAAAAAAAAAAAAAAAAAAAAAA6AMAAAUAAAABAQAAVXNlcnMAAAADAAAAAQEAAGpkbQAHAAAAAQEAAExpYnJhcnkAEAAAAAEBAABNb2JpbGUgRG9jdW1lbnRzEwAAAAEBAABjb21+YXBwbGV+Q2xvdWREb2NzAAUAAAABAQAATWF0aHMAAAAGAAAAAQEAAEJpYnRleAAABQAAAAEBAABGaWxlZAAAAB0AAAABAQAAUzAwMjUtNTcxOC0xOTczLTAzMzU2MTAtNS5wZGYAAAAkAAAAAQYAAAQAAAAUAAAAIAAAADAAAABIAAAAZAAAAHQAAACEAAAAlAAAAAgAAAAEAwAAb1gAAAAAAAAIAAAABAMAAJhePQAAAAAACAAAAAQDAACiXj0AAAAAAAgAAAAEAwAAmqQ9AAAAAAAIAAAABAMAAJK4PQAAAAAACAAAAAQDAAC//kAAAAAAAAgAAAAEAwAAZwNBAAAAAAAIAAAABAMAAIsOQQAAAAAACAAAAAQDAADz8RYBAAAAACQAAAABBgAA6AAAAPgAAAAIAQAAGAEAACgBAAA4AQAASAEAAFgBAABoAQAACAAAAAAEAABBwUV3iQAAABgAAAABAgAAAQAAAAAAAAAPAAAAAAAAAAAAAAAAAAAACAAAAAQDAAAHAAAAAAAAAAQAAAADAwAA9wEAAAgAAAABCQAAZmlsZTovLy8MAAAAAQEAAE1hY2ludG9zaCBIRAgAAAAEAwAAAFChG3MAAAAIAAAAAAQAAEHFB3qgAAAAJAAAAAEBAABFQzY2OTMxRC1DN0I5LTREMzktODFCOC0wRDVDOTc0QjBDMUMYAAAAAQIAAIEAAAABAAAA7xMAAAEAAAAAAAAAAAAAAAEAAAABAQAALwAAAAAAAAABBQAAGgAAAAEBAABOU1VSTERvY3VtZW50SWRlbnRpZmllcktleQAABAAAAAMDAAC4bwEAGgEAAAECAABkOWNjOTYwYWQ3ZWQwN2VlM2M3YWM1N2JmN2Q0OGVkMzRiYjk5YzZhODdkOThjYTUxOWExYjYzMWMyZWZlNTg2OzAwOzAwMDAwMDAwOzAwMDAwMDAwOzAwMDAwMDAwOzAwMDAwMDAwMDAwMDAwMjA7Y29tLmFwcGxlLmFwcC1zYW5kYm94LnJlYWQtd3JpdGU7MDE7MDEwMDAwMTA7MDAwMDAwMDAwMTE2ZjFmMzswMTsvdXNlcnMvamRtL2xpYnJhcnkvbW9iaWxlIGRvY3VtZW50cy9jb21+YXBwbGV+Y2xvdWRkb2NzL21hdGhzL2JpYnRleC9maWxlZC9zMDAyNS01NzE4LTE5NzMtMDMzNTYxMC01LnBkZgAAANgAAAD+////AQAAAAAAAAARAAAABBAAALwAAAAAAAAABRAAAHgBAAAAAAAAEBAAALQBAAAAAAAAQBAAAKQBAAAAAAAAAiAAAIACAAAAAAAABSAAAPABAAAAAAAAECAAAAACAAAAAAAAESAAADQCAAAAAAAAEiAAABQCAAAAAAAAEyAAACQCAAAAAAAAICAAAGACAAAAAAAAMCAAAIwCAAAAAAAAAcAAANQBAAAAAAAAEcAAABQAAAAAAAAAEsAAAOQBAAAAAAAAgPAAAMQCAAAAAAAAlAIAgLgCAAAAAAAAAAgADQAaACMAjwAAAAAAAAIBAAAAAAAAAAUAAAAAAAAAAAAAAAAAAAWL},
	bdsk-url-1 = {https://mathscinet.ams.org/mathscinet-getitem?mr=0335610},
	bdsk-url-2 = {https://doi.org/10.2307/2005654}}

@manual{Walker1992ab,
	author = {Walker, Trevor},
	date-added = {2019-05-13 11:49:03 +0100},
	date-modified = {2019-05-13 11:49:33 +0100},
	title = {tcsemi},
	url = {https://github.com/james-d-mitchell/tcsemi},
	year = {1992},
	bdsk-url-1 = {http://dx.doi.org/10.5281/zenodo.592893},
	bdsk-url-2 = {https://github.com/james-d-mitchell/tcsemi}}

@article{Jura1978aa,
	author = {Jura, Andrzej},
	date-added = {2019-05-13 11:47:12 +0100},
	date-modified = {2019-05-13 11:47:12 +0100},
	fjournal = {Canadian Mathematical Bulletin. Bulletin Canadien de Math\'ematiques},
	issn = {0008-4395},
	journal = {Canad. Math. Bull.},
	mrclass = {20M05},
	mrnumber = {MR0486223 (58 \#5994)},
	mrreviewer = {Mary Schaps},
	number = {1},
	pages = {37--46},
	title = {Coset enumeration in a finitely presented semigroup},
	volume = {21},
	year = {1978},
	bdsk-file-1 = {YnBsaXN0MDDSAQIDBFxyZWxhdGl2ZVBhdGhYYm9va21hcmtfEFguLi8uLi8uLi8uLi9MaWJyYXJ5L01vYmlsZSBEb2N1bWVudHMvY29tfmFwcGxlfkNsb3VkRG9jcy9NYXRocy9CaWJ0ZXgvRmlsZWQvMDQ4NjIyMzAucGRmTxEE1GJvb2vUBAAAAAAEEDAAAAAAAAAAAAAAAAAAAAAAAAAAAAAAAAAAAAAAAAAAAAAAAMQDAAAFAAAAAQEAAFVzZXJzAAAAAwAAAAEBAABqZG0ABwAAAAEBAABMaWJyYXJ5ABAAAAABAQAATW9iaWxlIERvY3VtZW50cxMAAAABAQAAY29tfmFwcGxlfkNsb3VkRG9jcwAFAAAAAQEAAE1hdGhzAAAABgAAAAEBAABCaWJ0ZXgAAAUAAAABAQAARmlsZWQAAAAMAAAAAQEAADA0ODYyMjMwLnBkZiQAAAABBgAABAAAABQAAAAgAAAAMAAAAEgAAABkAAAAdAAAAIQAAACUAAAACAAAAAQDAABvWAAAAAAAAAgAAAAEAwAAmF49AAAAAAAIAAAABAMAAKJePQAAAAAACAAAAAQDAACapD0AAAAAAAgAAAAEAwAAkrg9AAAAAAAIAAAABAMAAL/+QAAAAAAACAAAAAQDAABnA0EAAAAAAAgAAAAEAwAAiw5BAAAAAAAIAAAABAMAABI6SQAAAAAAJAAAAAEGAADUAAAA5AAAAPQAAAAEAQAAFAEAACQBAAA0AQAARAEAAFQBAAAIAAAAAAQAAEG0X+qAAAAAGAAAAAECAAABAAAAAAAAAA8AAAAAAAAAAAAAAAAAAAAIAAAABAMAAAcAAAAAAAAABAAAAAMDAAD3AQAACAAAAAEJAABmaWxlOi8vLwwAAAABAQAATWFjaW50b3NoIEhECAAAAAQDAAAAUKEbcwAAAAgAAAAABAAAQcUHeqAAAAAkAAAAAQEAAEVDNjY5MzFELUM3QjktNEQzOS04MUI4LTBENUM5NzRCMEMxQxgAAAABAgAAgQAAAAEAAADvEwAAAQAAAAAAAAAAAAAAAQAAAAEBAAAvAAAAAAAAAAEFAAAaAAAAAQEAAE5TVVJMRG9jdW1lbnRJZGVudGlmaWVyS2V5AAAEAAAAAwMAAOVkAQAJAQAAAQIAADM5N2FiZTQzNmRlN2JhMTY3OWUyNWRmYjE3NDRkMTIzNDM0NWI5ZTAzYjc1MzVlYThkYjAxZWQ1NWY1M2FjNmQ7MDA7MDAwMDAwMDA7MDAwMDAwMDA7MDAwMDAwMDA7MDAwMDAwMDAwMDAwMDAyMDtjb20uYXBwbGUuYXBwLXNhbmRib3gucmVhZC13cml0ZTswMTswMTAwMDAxMDswMDAwMDAwMDAwNDkzYTEyOzAxOy91c2Vycy9qZG0vbGlicmFyeS9tb2JpbGUgZG9jdW1lbnRzL2NvbX5hcHBsZX5jbG91ZGRvY3MvbWF0aHMvYmlidGV4L2ZpbGVkLzA0ODYyMjMwLnBkZgAAAADYAAAA/v///wEAAAAAAAAAEQAAAAQQAACoAAAAAAAAAAUQAABkAQAAAAAAABAQAACgAQAAAAAAAEAQAACQAQAAAAAAAAIgAABsAgAAAAAAAAUgAADcAQAAAAAAABAgAADsAQAAAAAAABEgAAAgAgAAAAAAABIgAAAAAgAAAAAAABMgAAAQAgAAAAAAACAgAABMAgAAAAAAADAgAAB4AgAAAAAAAAHAAADAAQAAAAAAABHAAAAUAAAAAAAAABLAAADQAQAAAAAAAIDwAACwAgAAAAAAAIACAICkAgAAAAAAAAAIAA0AGgAjAH4AAAAAAAACAQAAAAAAAAAFAAAAAAAAAAAAAAAAAAAFVg==},
	bdsk-file-2 = {YnBsaXN0MDDSAQIDBFxyZWxhdGl2ZVBhdGhYYm9va21hcmtfEGQuLi8uLi8uLi8uLi9MaWJyYXJ5L01vYmlsZSBEb2N1bWVudHMvY29tfmFwcGxlfkNsb3VkRG9jcy9NYXRocy9CaWJ0ZXgvRmlsZWQvY21iMTk3OHYyMS4wMDM3LTAwNDYucGRmTxEE7GJvb2vsBAAAAAAEEDAAAAAAAAAAAAAAAAAAAAAAAAAAAAAAAAAAAAAAAAAAAAAAANwDAAAFAAAAAQEAAFVzZXJzAAAAAwAAAAEBAABqZG0ABwAAAAEBAABMaWJyYXJ5ABAAAAABAQAATW9iaWxlIERvY3VtZW50cxMAAAABAQAAY29tfmFwcGxlfkNsb3VkRG9jcwAFAAAAAQEAAE1hdGhzAAAABgAAAAEBAABCaWJ0ZXgAAAUAAAABAQAARmlsZWQAAAAYAAAAAQEAAGNtYjE5Nzh2MjEuMDAzNy0wMDQ2LnBkZiQAAAABBgAABAAAABQAAAAgAAAAMAAAAEgAAABkAAAAdAAAAIQAAACUAAAACAAAAAQDAABvWAAAAAAAAAgAAAAEAwAAmF49AAAAAAAIAAAABAMAAKJePQAAAAAACAAAAAQDAACapD0AAAAAAAgAAAAEAwAAkrg9AAAAAAAIAAAABAMAAL/+QAAAAAAACAAAAAQDAABnA0EAAAAAAAgAAAAEAwAAiw5BAAAAAAAIAAAABAMAANcfQQAAAAAAJAAAAAEGAADgAAAA8AAAAAABAAAQAQAAIAEAADABAABAAQAAUAEAAGABAAAIAAAAAAQAAEG8vn2EAAAAGAAAAAECAAABAAAAAAAAAA8AAAAAAAAAAAAAAAAAAAAIAAAABAMAAAcAAAAAAAAABAAAAAMDAAD3AQAACAAAAAEJAABmaWxlOi8vLwwAAAABAQAATWFjaW50b3NoIEhECAAAAAQDAAAAUKEbcwAAAAgAAAAABAAAQcUHeqAAAAAkAAAAAQEAAEVDNjY5MzFELUM3QjktNEQzOS04MUI4LTBENUM5NzRCMEMxQxgAAAABAgAAgQAAAAEAAADvEwAAAQAAAAAAAAAAAAAAAQAAAAEBAAAvAAAAAAAAAAEFAAAaAAAAAQEAAE5TVVJMRG9jdW1lbnRJZGVudGlmaWVyS2V5AAAEAAAAAwMAAOZkAQAVAQAAAQIAAGU3N2JiZjZjYmU4ZGY3MDU1N2ZiZjZjY2UwMTE5MTVmNTA0Y2Q1MTNkYzczZWMwN2Q2MDU2NGNmOTQ2NGFkODg7MDA7MDAwMDAwMDA7MDAwMDAwMDA7MDAwMDAwMDA7MDAwMDAwMDAwMDAwMDAyMDtjb20uYXBwbGUuYXBwLXNhbmRib3gucmVhZC13cml0ZTswMTswMTAwMDAxMDswMDAwMDAwMDAwNDExZmQ3OzAxOy91c2Vycy9qZG0vbGlicmFyeS9tb2JpbGUgZG9jdW1lbnRzL2NvbX5hcHBsZX5jbG91ZGRvY3MvbWF0aHMvYmlidGV4L2ZpbGVkL2NtYjE5Nzh2MjEuMDAzNy0wMDQ2LnBkZgAAAADYAAAA/v///wEAAAAAAAAAEQAAAAQQAAC0AAAAAAAAAAUQAABwAQAAAAAAABAQAACsAQAAAAAAAEAQAACcAQAAAAAAAAIgAAB4AgAAAAAAAAUgAADoAQAAAAAAABAgAAD4AQAAAAAAABEgAAAsAgAAAAAAABIgAAAMAgAAAAAAABMgAAAcAgAAAAAAACAgAABYAgAAAAAAADAgAACEAgAAAAAAAAHAAADMAQAAAAAAABHAAAAUAAAAAAAAABLAAADcAQAAAAAAAIDwAAC8AgAAAAAAAIwCAICwAgAAAAAAAAAIAA0AGgAjAIoAAAAAAAACAQAAAAAAAAAFAAAAAAAAAAAAAAAAAAAFeg==}}

@manual{Mitchell2021ab,
	author = {Mitchell, J. D. and others},
	date-added = {2019-05-13 11:43:01 +0100},
	date-modified = {2024-02-24 11:31:03 +0000},
	doi = {http://dx.doi.org/10.5281/zenodo.1437752},
	month = {Jan},
	title = {libsemigroups - C++ library for semigroups and monoids, Version 2.7.3},
	url = {https://libsemigroups.readthedocs.io/},
	year = {2024},
	bdsk-url-1 = {https://libsemigroups.readthedocs.io/},
	bdsk-url-2 = {http://dx.doi.org/10.5281/zenodo.592893}}

@article{Neumann1968aa,
	author = {B. H. {Neumann}},
	bdsk-file-2 = {YnBsaXN0MDDSAQIDBFxyZWxhdGl2ZVBhdGhZYWxpYXNEYXRhXxB3Li4vLi4vLi4vLi4vTGlicmFyeS9Nb2JpbGUgRG9jdW1lbnRzL2NvbX5hcHBsZX5DbG91ZERvY3MvTWF0aHMvQmlidGV4L0ZpbGVkL3NvbWVfcmVtYXJrc19vbl9zZW1pZ3JvdXBfcHJlc2VudGF0aW9ucy5wZGZPEQIgAAAAAAIgAAIAAAxNYWNpbnRvc2ggSEQAAAAAAAAAAAAAAAAAAAAAAAAAQkQAAf////8fc29tZV9yZW1hcmtzX29uX3NlI0ZGRkZGRkZGLnBkZgAAAAAAAAAAAAAAAAAAAAAAAAAAAAAAAAAAAAAAAAAA/////wAAAAAAAAAAAAAAAAAEAAcAAAogY3UAAAAAAAAAAAAAAAAABUZpbGVkAAACAHcvOlVzZXJzOmpkbTpMaWJyYXJ5Ok1vYmlsZSBEb2N1bWVudHM6Y29tfmFwcGxlfkNsb3VkRG9jczpNYXRoczpCaWJ0ZXg6RmlsZWQ6c29tZV9yZW1hcmtzX29uX3NlbWlncm91cF9wcmVzZW50YXRpb25zLnBkZgAADgBYACsAcwBvAG0AZQBfAHIAZQBtAGEAcgBrAHMAXwBvAG4AXwBzAGUAbQBpAGcAcgBvAHUAcABfAHAAcgBlAHMAZQBuAHQAYQB0AGkAbwBuAHMALgBwAGQAZgAPABoADABNAGEAYwBpAG4AdABvAHMAaAAgAEgARAASAHVVc2Vycy9qZG0vTGlicmFyeS9Nb2JpbGUgRG9jdW1lbnRzL2NvbX5hcHBsZX5DbG91ZERvY3MvTWF0aHMvQmlidGV4L0ZpbGVkL3NvbWVfcmVtYXJrc19vbl9zZW1pZ3JvdXBfcHJlc2VudGF0aW9ucy5wZGYAABMAAS8AABUAAgAK//8AAAAIAA0AGgAkAJ4AAAAAAAACAQAAAAAAAAAFAAAAAAAAAAAAAAAAAAACwg==},
	date-added = {2019-04-04 15:23:08 +0100},
	date-modified = {2019-04-04 15:23:12 +0100},
	fjournal = {{Canadian Journal of Mathematics}},
	issn = {0008-414X; 1496-4279/e},
	journal = {{Can. J. Math.}},
	language = {English},
	pages = {1018--1026},
	publisher = {Cambridge University Press, Cambridge},
	title = {{Some remarks on semigroup presentations.}},
	volume = {19},
	year = {1968},
	zbl = {0189.30503}}

@manual{GAP4,
	date-modified = {2024-02-24 11:23:28 +0000},
	key = {GAP},
	organization = {The GAP~Group},
	title = {{GAP -- Groups, Algorithms, and Programming, Version 4.12.2}},
	url = {https://www.gap-system.org},
	year = 2022,
	bdsk-url-1 = {https://www.gap-system.org}}

@misc{Williams2016aa,
	author = {Williams, Alun},
	lastchecked = {2019-04-04},
	title = {Monoid automata factory},
	url = {http://maffsa.sourceforge.net/},
	year = {2016},
	bdsk-url-1 = {http://maffsa.sourceforge.net/}}

@misc{Havas1999aa,
	author = {Havas, George and Ramsay, Colin},
	date-added = {2019-04-04 15:04:33 +0100},
	date-modified = {2024-02-24 11:28:53 +0000},
	howpublished = {\url{http://staff.itee.uq.edu.au/havas}},
	lastchecked = {04/04/2019},
	title = {Coset enumeration: {ACE} version 5.2},
	year = {2009}}

@book{Sims1994aa,
	address = {Cambridge, England, New York},
	author = {Sims, Charles C.},
	date-added = {2019-04-04 14:55:14 +0100},
	date-modified = {2019-04-04 14:55:14 +0100},
	isbn = {0-521-43213-8},
	publisher = {Cambridge University Press},
	series = {Encyclopedia of mathematics and its applications},
	title = {Computation with finitely presented groups},
	url = {http://opac.inria.fr/record=b1082972},
	year = 1994,
	bdsk-file-1 = {YnBsaXN0MDDSAQIDBFxyZWxhdGl2ZVBhdGhYYm9va21hcmtfEMkuLi8uLi8uLi8uLi9MaWJyYXJ5L01vYmlsZSBEb2N1bWVudHMvY29tfmFwcGxlfkNsb3VkRG9jcy9NYXRocy9CaWJ0ZXgvRmlsZWQvKEVuY3ljbG9wZWRpYSBvZiBNYXRoZW1hdGljcyBhbmQgaXRzIEFwcGxpY2F0aW9ucykgQ2hhcmxlcyBDLiBTaW1zLUNvbXB1dGF0aW9uIHdpdGggZmluaXRlbHkgcHJlc2VudGVkIGdyb3Vwcy1DVVAgKDE5OTQpLmRqdnVPEQW4Ym9va7gFAAAAAAQQMAAAAAAAAAAAAAAAAAAAAAAAAAAAAAAAAAAAAAAAAAAAAAAAqAQAAAUAAAABAQAAVXNlcnMAAAADAAAAAQEAAGpkbQAHAAAAAQEAAExpYnJhcnkAEAAAAAEBAABNb2JpbGUgRG9jdW1lbnRzEwAAAAEBAABjb21+YXBwbGV+Q2xvdWREb2NzAAUAAAABAQAATWF0aHMAAAAGAAAAAQEAAEJpYnRleAAABQAAAAEBAABGaWxlZAAAAH0AAAABAQAAKEVuY3ljbG9wZWRpYSBvZiBNYXRoZW1hdGljcyBhbmQgaXRzIEFwcGxpY2F0aW9ucykgQ2hhcmxlcyBDLiBTaW1zLUNvbXB1dGF0aW9uIHdpdGggZmluaXRlbHkgcHJlc2VudGVkIGdyb3Vwcy1DVVAgKDE5OTQpLmRqdnUAAAAkAAAAAQYAAAQAAAAUAAAAIAAAADAAAABIAAAAZAAAAHQAAACEAAAAlAAAAAgAAAAEAwAAb1gAAAAAAAAIAAAABAMAAJhePQAAAAAACAAAAAQDAACiXj0AAAAAAAgAAAAEAwAAmqQ9AAAAAAAIAAAABAMAAJK4PQAAAAAACAAAAAQDAAC//kAAAAAAAAgAAAAEAwAAZwNBAAAAAAAIAAAABAMAAIsOQQAAAAAACAAAAAQDAADS8RYBAAAAACQAAAABBgAASAEAAFgBAABoAQAAeAEAAIgBAACYAQAAqAEAALgBAADIAQAACAAAAAAEAABBvg2O7gAAABgAAAABAgAAAQAAAAAAAAAPAAAAAAAAAAAAAAAAAAAACAAAAAQDAAAHAAAAAAAAAAQAAAADAwAA9wEAAAgAAAABCQAAZmlsZTovLy8MAAAAAQEAAE1hY2ludG9zaCBIRAgAAAAEAwAAAFChG3MAAAAIAAAAAAQAAEHFB3qgAAAAJAAAAAEBAABFQzY2OTMxRC1DN0I5LTREMzktODFCOC0wRDVDOTc0QjBDMUMYAAAAAQIAAIEAAAABAAAA7xMAAAEAAAAAAAAAAAAAAAEAAAABAQAALwAAAAAAAAABBQAAGgAAAAEBAABOU1VSTERvY3VtZW50SWRlbnRpZmllcktleQAABAAAAAMDAAC/cwEAegEAAAECAAAzYzQ1MDJjNjI1ZGYzYWNhM2IwMjlmY2FjMmJlNzcyNzBjNWMzMWU4ZWZmZTM0ZGYyYjIzYmRkNjdjZDlhMjhlOzAwOzAwMDAwMDAwOzAwMDAwMDAwOzAwMDAwMDAwOzAwMDAwMDAwMDAwMDAwMjA7Y29tLmFwcGxlLmFwcC1zYW5kYm94LnJlYWQtd3JpdGU7MDE7MDEwMDAwMTA7MDAwMDAwMDAwMTE2ZjFkMjswMTsvdXNlcnMvamRtL2xpYnJhcnkvbW9iaWxlIGRvY3VtZW50cy9jb21+YXBwbGV+Y2xvdWRkb2NzL21hdGhzL2JpYnRleC9maWxlZC8oZW5jeWNsb3BlZGlhIG9mIG1hdGhlbWF0aWNzIGFuZCBpdHMgYXBwbGljYXRpb25zKSBjaGFybGVzIGMuIHNpbXMtY29tcHV0YXRpb24gd2l0aCBmaW5pdGVseSBwcmVzZW50ZWQgZ3JvdXBzLWN1cCAoMTk5NCkuZGp2dQAAANgAAAD+////AQAAAAAAAAARAAAABBAAABwBAAAAAAAABRAAANgBAAAAAAAAEBAAABQCAAAAAAAAQBAAAAQCAAAAAAAAAiAAAOACAAAAAAAABSAAAFACAAAAAAAAECAAAGACAAAAAAAAESAAAJQCAAAAAAAAEiAAAHQCAAAAAAAAEyAAAIQCAAAAAAAAICAAAMACAAAAAAAAMCAAAOwCAAAAAAAAAcAAADQCAAAAAAAAEcAAABQAAAAAAAAAEsAAAEQCAAAAAAAAgPAAACQDAAAAAAAA9AIAgBgDAAAAAAAAAAgADQAaACMA7wAAAAAAAAIBAAAAAAAAAAUAAAAAAAAAAAAAAAAAAAar},
	bdsk-url-1 = {http://opac.inria.fr/record=b1082972}}

@book{B.-Eick2004aa,
	address = {{Boca Raton, Ann Arbor, London, Tokyo}},
	author = {Holt, Derek F. and Eick, Bettina and O'Brien, Eamonn A.},
	date-added = {2019-04-04 14:54:25 +0100},
	date-modified = {2019-04-04 14:54:28 +0100},
	keywords = {computational algebra, group theory},
	publisher = {CRC Press},
	title = {Handbook of computational group theory},
	year = {2004},
	bdsk-file-1 = {YnBsaXN0MDDSAQIDBFxyZWxhdGl2ZVBhdGhYYm9va21hcmtfEF4uLi8uLi8uLi8uLi9MaWJyYXJ5L01vYmlsZSBEb2N1bWVudHMvY29tfmFwcGxlfkNsb3VkRG9jcy9NYXRocy9CaWJ0ZXgvRmlsZWQvaG9sdF9oYW5kYm9vazAucGRmTxEE4GJvb2vgBAAAAAAEEDAAAAAAAAAAAAAAAAAAAAAAAAAAAAAAAAAAAAAAAAAAAAAAANADAAAFAAAAAQEAAFVzZXJzAAAAAwAAAAEBAABqZG0ABwAAAAEBAABMaWJyYXJ5ABAAAAABAQAATW9iaWxlIERvY3VtZW50cxMAAAABAQAAY29tfmFwcGxlfkNsb3VkRG9jcwAFAAAAAQEAAE1hdGhzAAAABgAAAAEBAABCaWJ0ZXgAAAUAAAABAQAARmlsZWQAAAASAAAAAQEAAGhvbHRfaGFuZGJvb2swLnBkZgAAJAAAAAEGAAAEAAAAFAAAACAAAAAwAAAASAAAAGQAAAB0AAAAhAAAAJQAAAAIAAAABAMAAG9YAAAAAAAACAAAAAQDAACYXj0AAAAAAAgAAAAEAwAAol49AAAAAAAIAAAABAMAAJqkPQAAAAAACAAAAAQDAACSuD0AAAAAAAgAAAAEAwAAv/5AAAAAAAAIAAAABAMAAGcDQQAAAAAACAAAAAQDAACLDkEAAAAAAAgAAAAEAwAAVvIWAQAAAAAkAAAAAQYAANwAAADsAAAA/AAAAAwBAAAcAQAALAEAADwBAABMAQAAXAEAAAgAAAAABAAAQbRf6n4AAAAYAAAAAQIAAAEAAAAAAAAADwAAAAAAAAAAAAAAAAAAAAgAAAAEAwAABwAAAAAAAAAEAAAAAwMAAPcBAAAIAAAAAQkAAGZpbGU6Ly8vDAAAAAEBAABNYWNpbnRvc2ggSEQIAAAABAMAAABQoRtzAAAACAAAAAAEAABBxQd6oAAAACQAAAABAQAARUM2NjkzMUQtQzdCOS00RDM5LTgxQjgtMEQ1Qzk3NEIwQzFDGAAAAAECAACBAAAAAQAAAO8TAAABAAAAAAAAAAAAAAABAAAAAQEAAC8AAAAAAAAAAQUAABoAAAABAQAATlNVUkxEb2N1bWVudElkZW50aWZpZXJLZXkAAAQAAAADAwAAg3MBAA8BAAABAgAAN2M4NTVmY2VlMGI2OTUwOTY3MzdjOGEzODdkZDMwMzc4Y2JkZTdhY2MzMWE5ODZmODY4MWU4MGZlZmE0ODk3NTswMDswMDAwMDAwMDswMDAwMDAwMDswMDAwMDAwMDswMDAwMDAwMDAwMDAwMDIwO2NvbS5hcHBsZS5hcHAtc2FuZGJveC5yZWFkLXdyaXRlOzAxOzAxMDAwMDEwOzAwMDAwMDAwMDExNmYyNTY7MDE7L3VzZXJzL2pkbS9saWJyYXJ5L21vYmlsZSBkb2N1bWVudHMvY29tfmFwcGxlfmNsb3VkZG9jcy9tYXRocy9iaWJ0ZXgvZmlsZWQvaG9sdF9oYW5kYm9vazAucGRmAADYAAAA/v///wEAAAAAAAAAEQAAAAQQAACwAAAAAAAAAAUQAABsAQAAAAAAABAQAACoAQAAAAAAAEAQAACYAQAAAAAAAAIgAAB0AgAAAAAAAAUgAADkAQAAAAAAABAgAAD0AQAAAAAAABEgAAAoAgAAAAAAABIgAAAIAgAAAAAAABMgAAAYAgAAAAAAACAgAABUAgAAAAAAADAgAACAAgAAAAAAAAHAAADIAQAAAAAAABHAAAAUAAAAAAAAABLAAADYAQAAAAAAAIDwAAC4AgAAAAAAAIgCAICsAgAAAAAAAAAIAA0AGgAjAIQAAAAAAAACAQAAAAAAAAAFAAAAAAAAAAAAAAAAAAAFaA==}}

@phdthesis{Walker1992aa,
	author = {Walker, Trevor},
	date-added = {2019-04-04 14:53:11 +0100},
	date-modified = {2019-04-04 14:53:46 +0100},
	school = {University of St Andrews},
	title = {Semigroup enumeration - Computer implementation and Applications},
	year = {1992}}

@article{Leech1963aa,
	author = {John Leech},
	date-added = {2019-04-04 14:48:26 +0100},
	date-modified = {2019-04-04 14:52:01 +0100},
	doi = {10.1017/s0305004100036872},
	journal = {Mathematical Proceedings of the Cambridge Philosophical Society},
	month = {apr},
	number = {02},
	pages = {257},
	publisher = {Cambridge University Press ({CUP})},
	title = {Coset enumeration on digital computers},
	volume = {59},
	year = {1963},
	bdsk-file-1 = {YnBsaXN0MDDSAQIDBFxyZWxhdGl2ZVBhdGhYYm9va21hcmtfEHYuLi8uLi8uLi8uLi9MaWJyYXJ5L01vYmlsZSBEb2N1bWVudHMvY29tfmFwcGxlfkNsb3VkRG9jcy9NYXRocy9CaWJ0ZXgvRmlsZWQvY29zZXRfZW51bWVyYXRpb25fb25fZGlnaXRhbF9jb21wdXRlcnMucGRmTxEFEGJvb2sQBQAAAAAEEDAAAAAAAAAAAAAAAAAAAAAAAAAAAAAAAAAAAAAAAAAAAAAAAAAEAAAFAAAAAQEAAFVzZXJzAAAAAwAAAAEBAABqZG0ABwAAAAEBAABMaWJyYXJ5ABAAAAABAQAATW9iaWxlIERvY3VtZW50cxMAAAABAQAAY29tfmFwcGxlfkNsb3VkRG9jcwAFAAAAAQEAAE1hdGhzAAAABgAAAAEBAABCaWJ0ZXgAAAUAAAABAQAARmlsZWQAAAAqAAAAAQEAAGNvc2V0X2VudW1lcmF0aW9uX29uX2RpZ2l0YWxfY29tcHV0ZXJzLnBkZgAAJAAAAAEGAAAEAAAAFAAAACAAAAAwAAAASAAAAGQAAAB0AAAAhAAAAJQAAAAIAAAABAMAAG9YAAAAAAAACAAAAAQDAACYXj0AAAAAAAgAAAAEAwAAol49AAAAAAAIAAAABAMAAJqkPQAAAAAACAAAAAQDAACSuD0AAAAAAAgAAAAEAwAAv/5AAAAAAAAIAAAABAMAAGcDQQAAAAAACAAAAAQDAACLDkEAAAAAAAgAAAAEAwAA/JVDAAAAAAAkAAAAAQYAAPQAAAAEAQAAFAEAACQBAAA0AQAARAEAAFQBAABkAQAAdAEAAAgAAAAABAAAQcErIbQAAAAYAAAAAQIAAAEAAAAAAAAADwAAAAAAAAAAAAAAAAAAAAgAAAAEAwAABwAAAAAAAAAEAAAAAwMAAPcBAAAIAAAAAQkAAGZpbGU6Ly8vDAAAAAEBAABNYWNpbnRvc2ggSEQIAAAABAMAAABQoRtzAAAACAAAAAAEAABBxQd6oAAAACQAAAABAQAARUM2NjkzMUQtQzdCOS00RDM5LTgxQjgtMEQ1Qzk3NEIwQzFDGAAAAAECAACBAAAAAQAAAO8TAAABAAAAAAAAAAAAAAABAAAAAQEAAC8AAAAAAAAAAQUAABoAAAABAQAATlNVUkxEb2N1bWVudElkZW50aWZpZXJLZXkAAAQAAAADAwAA9msBACcBAAABAgAAY2Y0ZWI5MmZmZTI5NTFmNGM5Nzc0YjViODJlNzU2YWViY2MyYWYyYjJkOWM2YWQ5MmRiYjI5YmVlMjZjZDgwMDswMDswMDAwMDAwMDswMDAwMDAwMDswMDAwMDAwMDswMDAwMDAwMDAwMDAwMDIwO2NvbS5hcHBsZS5hcHAtc2FuZGJveC5yZWFkLXdyaXRlOzAxOzAxMDAwMDEwOzAwMDAwMDAwMDA0Mzk1ZmM7MDE7L3VzZXJzL2pkbS9saWJyYXJ5L21vYmlsZSBkb2N1bWVudHMvY29tfmFwcGxlfmNsb3VkZG9jcy9tYXRocy9iaWJ0ZXgvZmlsZWQvY29zZXRfZW51bWVyYXRpb25fb25fZGlnaXRhbF9jb21wdXRlcnMucGRmAADYAAAA/v///wEAAAAAAAAAEQAAAAQQAADIAAAAAAAAAAUQAACEAQAAAAAAABAQAADAAQAAAAAAAEAQAACwAQAAAAAAAAIgAACMAgAAAAAAAAUgAAD8AQAAAAAAABAgAAAMAgAAAAAAABEgAABAAgAAAAAAABIgAAAgAgAAAAAAABMgAAAwAgAAAAAAACAgAABsAgAAAAAAADAgAACYAgAAAAAAAAHAAADgAQAAAAAAABHAAAAUAAAAAAAAABLAAADwAQAAAAAAAIDwAADQAgAAAAAAAKACAIDEAgAAAAAAAAAIAA0AGgAjAJwAAAAAAAACAQAAAAAAAAAFAAAAAAAAAAAAAAAAAAAFsA==},
	bdsk-url-1 = {https://doi.org/10.1017/s0305004100036872}}

@article{Todd1936aa,
	author = {J. A. Todd and H. S. M. Coxeter},
	date-added = {2019-04-04 14:24:54 +0100},
	date-modified = {2019-04-04 14:24:56 +0100},
	doi = {10.1017/s0013091500008221},
	journal = {Proceedings of the Edinburgh Mathematical Society},
	month = {Oct},
	number = {01},
	pages = {26-34},
	publisher = {Cambridge University Press ({CUP})},
	title = {A practical method for enumerating cosets of a finite abstract group},
	url = {https://doi.org/10.1017/s0013091500008221},
	volume = {5},
	year = {1936},
	bdsk-file-1 = {YnBsaXN0MDDSAQIDBFxyZWxhdGl2ZVBhdGhYYm9va21hcmtfEJQuLi8uLi8uLi8uLi9MaWJyYXJ5L01vYmlsZSBEb2N1bWVudHMvY29tfmFwcGxlfkNsb3VkRG9jcy9NYXRocy9CaWJ0ZXgvRmlsZWQvYS1wcmFjdGljYWwtbWV0aG9kLWZvci1lbnVtZXJhdGluZy1jb3NldHMtb2YtYS1maW5pdGUtYWJzdHJhY3QtZ3JvdXAucGRmTxEFTGJvb2tMBQAAAAAEEDAAAAAAAAAAAAAAAAAAAAAAAAAAAAAAAAAAAAAAAAAAAAAAADwEAAAFAAAAAQEAAFVzZXJzAAAAAwAAAAEBAABqZG0ABwAAAAEBAABMaWJyYXJ5ABAAAAABAQAATW9iaWxlIERvY3VtZW50cxMAAAABAQAAY29tfmFwcGxlfkNsb3VkRG9jcwAFAAAAAQEAAE1hdGhzAAAABgAAAAEBAABCaWJ0ZXgAAAUAAAABAQAARmlsZWQAAABIAAAAAQEAAGEtcHJhY3RpY2FsLW1ldGhvZC1mb3ItZW51bWVyYXRpbmctY29zZXRzLW9mLWEtZmluaXRlLWFic3RyYWN0LWdyb3VwLnBkZiQAAAABBgAABAAAABQAAAAgAAAAMAAAAEgAAABkAAAAdAAAAIQAAACUAAAACAAAAAQDAABvWAAAAAAAAAgAAAAEAwAAmF49AAAAAAAIAAAABAMAAKJePQAAAAAACAAAAAQDAACapD0AAAAAAAgAAAAEAwAAkrg9AAAAAAAIAAAABAMAAL/+QAAAAAAACAAAAAQDAABnA0EAAAAAAAgAAAAEAwAAiw5BAAAAAAAIAAAABAMAAHGVQwAAAAAAJAAAAAEGAAAQAQAAIAEAADABAABAAQAAUAEAAGABAABwAQAAgAEAAJABAAAIAAAAAAQAAEHBKx56AAAAGAAAAAECAAABAAAAAAAAAA8AAAAAAAAAAAAAAAAAAAAIAAAABAMAAAcAAAAAAAAABAAAAAMDAAD3AQAACAAAAAEJAABmaWxlOi8vLwwAAAABAQAATWFjaW50b3NoIEhECAAAAAQDAAAAUKEbcwAAAAgAAAAABAAAQcUHeqAAAAAkAAAAAQEAAEVDNjY5MzFELUM3QjktNEQzOS04MUI4LTBENUM5NzRCMEMxQxgAAAABAgAAgQAAAAEAAADvEwAAAQAAAAAAAAAAAAAAAQAAAAEBAAAvAAAAAAAAAAEFAAAaAAAAAQEAAE5TVVJMRG9jdW1lbnRJZGVudGlmaWVyS2V5AAAEAAAAAwMAAJ5tAQBFAQAAAQIAADVlMDY3OTg1YjlhOTk5YmU2MjA5ODhkNDFmMDlmZDA2YzJhMGZlNjI4MGUyN2VlNzBkOWM3NmE0OGU2YTIwMjU7MDA7MDAwMDAwMDA7MDAwMDAwMDA7MDAwMDAwMDA7MDAwMDAwMDAwMDAwMDAyMDtjb20uYXBwbGUuYXBwLXNhbmRib3gucmVhZC13cml0ZTswMTswMTAwMDAxMDswMDAwMDAwMDAwNDM5NTcxOzAxOy91c2Vycy9qZG0vbGlicmFyeS9tb2JpbGUgZG9jdW1lbnRzL2NvbX5hcHBsZX5jbG91ZGRvY3MvbWF0aHMvYmlidGV4L2ZpbGVkL2EtcHJhY3RpY2FsLW1ldGhvZC1mb3ItZW51bWVyYXRpbmctY29zZXRzLW9mLWEtZmluaXRlLWFic3RyYWN0LWdyb3VwLnBkZgAAAADYAAAA/v///wEAAAAAAAAAEQAAAAQQAADkAAAAAAAAAAUQAACgAQAAAAAAABAQAADcAQAAAAAAAEAQAADMAQAAAAAAAAIgAACoAgAAAAAAAAUgAAAYAgAAAAAAABAgAAAoAgAAAAAAABEgAABcAgAAAAAAABIgAAA8AgAAAAAAABMgAABMAgAAAAAAACAgAACIAgAAAAAAADAgAAC0AgAAAAAAAAHAAAD8AQAAAAAAABHAAAAUAAAAAAAAABLAAAAMAgAAAAAAAIDwAADsAgAAAAAAALwCAIDgAgAAAAAAAAAIAA0AGgAjALoAAAAAAAACAQAAAAAAAAAFAAAAAAAAAAAAAAAAAAAGCg==},
	bdsk-url-1 = {https://doi.org/10.1017/s0013091500008221}}

@phdthesis{Ruskuc1995aa,
	author = {Ru{\v{s}}kuc, Nikola},
	date-added = {2019-03-29 13:04:54 +0100},
	date-modified = {2019-03-29 13:04:54 +0100},
	month = {April},
	school = {University of St Andrews},
	title = {Semigroup presentations},
	year = {1995},
	bdsk-file-1 = {YnBsaXN0MDDSAQIDBFxyZWxhdGl2ZVBhdGhYYm9va21hcmtfEFwuLi8uLi8uLi8uLi9MaWJyYXJ5L01vYmlsZSBEb2N1bWVudHMvY29tfmFwcGxlfkNsb3VkRG9jcy9NYXRocy9CaWJ0ZXgvRmlsZWQvbnJ1c2t1YzE5OTUwLnBkZk8RBNxib29r3AQAAAAABBAwAAAAAAAAAAAAAAAAAAAAAAAAAAAAAAAAAAAAAAAAAAAAAADMAwAABQAAAAEBAABVc2VycwAAAAMAAAABAQAAamRtAAcAAAABAQAATGlicmFyeQAQAAAAAQEAAE1vYmlsZSBEb2N1bWVudHMTAAAAAQEAAGNvbX5hcHBsZX5DbG91ZERvY3MABQAAAAEBAABNYXRocwAAAAYAAAABAQAAQmlidGV4AAAFAAAAAQEAAEZpbGVkAAAAEAAAAAEBAABucnVza3VjMTk5NTAucGRmJAAAAAEGAAAEAAAAFAAAACAAAAAwAAAASAAAAGQAAAB0AAAAhAAAAJQAAAAIAAAABAMAAG9YAAAAAAAACAAAAAQDAACYXj0AAAAAAAgAAAAEAwAAol49AAAAAAAIAAAABAMAAJqkPQAAAAAACAAAAAQDAACSuD0AAAAAAAgAAAAEAwAAv/5AAAAAAAAIAAAABAMAAGcDQQAAAAAACAAAAAQDAACLDkEAAAAAAAgAAAAEAwAAMmMWAQAAAAAkAAAAAQYAANgAAADoAAAA+AAAAAgBAAAYAQAAKAEAADgBAABIAQAAWAEAAAgAAAAABAAAQbRf6oEAAAAYAAAAAQIAAAEAAAAAAAAADwAAAAAAAAAAAAAAAAAAAAgAAAAEAwAABwAAAAAAAAAEAAAAAwMAAPcBAAAIAAAAAQkAAGZpbGU6Ly8vDAAAAAEBAABNYWNpbnRvc2ggSEQIAAAABAMAAABQoRtzAAAACAAAAAAEAABBxQd6oAAAACQAAAABAQAARUM2NjkzMUQtQzdCOS00RDM5LTgxQjgtMEQ1Qzk3NEIwQzFDGAAAAAECAACBAAAAAQAAAO8TAAABAAAAAAAAAAAAAAABAAAAAQEAAC8AAAAAAAAAAQUAABoAAAABAQAATlNVUkxEb2N1bWVudElkZW50aWZpZXJLZXkAAAQAAAADAwAA52QBAA0BAAABAgAAOWE4NzcxOTA4Mjc3ZGY5MDAwOWUwZTNmMWY0NDdhY2UwZTJhNDJhZjFhOWFhOGIzNWY5OTFmOWVlZGJkMDVlZjswMDswMDAwMDAwMDswMDAwMDAwMDswMDAwMDAwMDswMDAwMDAwMDAwMDAwMDIwO2NvbS5hcHBsZS5hcHAtc2FuZGJveC5yZWFkLXdyaXRlOzAxOzAxMDAwMDEwOzAwMDAwMDAwMDExNjYzMzI7MDE7L3VzZXJzL2pkbS9saWJyYXJ5L21vYmlsZSBkb2N1bWVudHMvY29tfmFwcGxlfmNsb3VkZG9jcy9tYXRocy9iaWJ0ZXgvZmlsZWQvbnJ1c2t1YzE5OTUwLnBkZgAAAADYAAAA/v///wEAAAAAAAAAEQAAAAQQAACsAAAAAAAAAAUQAABoAQAAAAAAABAQAACkAQAAAAAAAEAQAACUAQAAAAAAAAIgAABwAgAAAAAAAAUgAADgAQAAAAAAABAgAADwAQAAAAAAABEgAAAkAgAAAAAAABIgAAAEAgAAAAAAABMgAAAUAgAAAAAAACAgAABQAgAAAAAAADAgAAB8AgAAAAAAAAHAAADEAQAAAAAAABHAAAAUAAAAAAAAABLAAADUAQAAAAAAAIDwAAC0AgAAAAAAAIQCAICoAgAAAAAAAAAIAA0AGgAjAIIAAAAAAAACAQAAAAAAAAAFAAAAAAAAAAAAAAAAAAAFYg==}}

@book{Rotman1995aa,
	author = {Rotman, Joseph J.},
	date-added = {2019-03-29 13:04:26 +0100},
	date-modified = {2019-03-29 13:04:26 +0100},
	edition = {Fourth},
	isbn = {0-387-94285-8},
	mrclass = {20-01},
	mrnumber = {1307623},
	mrreviewer = {Ronald Solomon},
	pages = {xvi+513},
	publisher = {Springer-Verlag, New York},
	series = {Graduate Texts in Mathematics},
	title = {An introduction to the theory of groups},
	url = {https://doi.org/10.1007/978-1-4612-4176-8},
	volume = {148},
	year = {1995},
	bdsk-url-1 = {http://mathscinet.ams.org/mathscinet-getitem?mr=1307623},
	bdsk-url-2 = {https://doi.org/10.1007/978-1-4612-4176-8}}

@article{robertson-unlu,
	author = {Robertson, E. F. and {\"U}nl{\"u}, Y.},
	journal = {Proceedings of the Edinburgh Mathematical Society},
	pages = {55-68},
	title = {On Semigroup Presentations},
	volume = {36},
	year = {1992}}

@article{Markov,
	author = {A. A. Markov.},
	journal = {Dokl. Akad. Sci. USSR},
	pages = {587-590},
	title = {On the impossibility of certain algorithms in the theory of associative systems},
	volume = {55},
	year = {1947}}

@article{Dehn1911,
	author = {Dehn, M.},
	doi = {10.1007/BF01456932},
	journal = {Mathematische Annalen},
	pages = {116-144},
	title = {{\"U}ber unendliche diskontinuierliche Gruppen},
	volume = {71},
	year = {1911},
	bdsk-url-1 = {https://doi.org/10.1007/BF01456932}}


\appendix
\newpage
\section{Extended examples}
\label{appendix-extended-examples}

This appendix contains a number of extended examples of congruence enumerations.

\begin{ex}
	\label{ex-felsch-A1}
	We will apply the Felsch strategy to the presentation
	\[
		\P = \langle a, b\ |\ a ^ 3 = a,\ b ^ 3 = b,\ (ab) ^ 2 = a ^ 2\rangle
	\]
with $a<b$. 
	The input word graph $\Gamma_{1}$ is the trivial graph and the input $\kappa_{1} = \Delta_{N_1}$. Since $S = \varnothing$, we do not apply steps \cref{de-2-sided-enumeration-process}(a) and (b).

	For the sake of simplicity, the steps in this example correspond to either a single application of \textbf{F1} (a single application of \textbf{TC1} and multiple applications of \textbf{TC2}) or a single application of \textbf{TC3}. If a step produces no change to $\Gamma_i$ or $\kappa_i$, this step is skipped and does not have a number; see \cref{table-ex-felsch-A1}, \cref{fig-ex-felsch-A1-a}, \cref{fig-ex-felsch-A1-b}, \cref{fig-ex-felsch-A1-c}, \cref{fig-ex-felsch-A1-d}, and \cref{fig-ex-felsch-A1-e}.

	\begin{description}
		\item[Step 1-2:] The only node in $\Gamma_{1}$ is $0$. Since there is no edge incident to $0$ labelled by $a$, in Step 1 we apply \textbf{F1} and add the node $1$ and the edge $(0, a, 1)$. Similarly, in Step 2 we add the node $2$ and the edge $(0, b, 2)$. The output is the word graph $\Gamma_{3}$ in \cref{subfig-felsch-2}.

		\item[Step 3:]  An application of \textbf{TC1} which leads to the definition of the node $3$ and the edge $(1, a, 3)$. At this point \textbf{F1} leads to an application of \textbf{TC2} to the node $3$ and the relation $(a^3, a)$ and hence we define the edge $(3, a, 1)$. The output of step 3 is $\Gamma_{4}$; see \cref{subfig-felsch-3}.

		\item[Step 4-5:] We apply \textbf{TC1} twice (since there are no applications of \textbf{TC2} that yield new information). In step 4 we define the node $4$ and the edge $(1, b, 4)$ and in step 5 the node $5$ and the edge $(2, a, 5)$; see \cref{subfig-felsch-5}.

		\item[Step 6:] The node $6$ and the edge $(2, b, 6)$ are defined. Applying \textbf{TC2} to the node $6$ and the relation $(b^3, b) \in R$ leads to the definition of the edge $(6, b, 2)$. The output of step 6 is $\Gamma_{7}$; see \cref{subfig-felsch-6}.

		\item[Step 7:] We apply \textbf{TC1} and define the node $7$ and the edges $(3, b,  7)$; see \cref{subfig-felsch-7}.

		\item[Step 8:]
			The node $8$ and edge $(4, a, 8)$ are defined in \textbf{TC1}. Applying \textbf{TC2} leads to the definition of edges $(7, b, 3)$, $(8, b, 3)$, and $(8, a, 4)$; see \cref{subfig-felsch-8}.

		\item[Step 9:] We apply \textbf{TC1} and define the node $9$ and the edge $(4, b, 9)$. We apply \textbf{TC2} to $4$ and the relation $(b^3, b)$ and define the edge $(9, b, 4)$; see \cref{subfig-felsch-9}.

		\item[Step 10:] We apply \textbf{TC1} and define the node $10$ and the edge $(5, a, 10)$. We apply \textbf{TC2} to the node $5$ and the relation $(a^3, a)$ and define the edge $(10, a, 5)$; see \cref{subfig-felsch-10}.

		\item[Step 11-12:]  We apply \textbf{TC1} twice, since there are no applications of \textbf{TC2} yielding any new information. The nodes $11$ and $12$ and the edges $(5, b, 11)$ and $(6, a, 12)$ are defined; see \cref{subfig-felsch-12}.

		\item[Step 13:] We apply \textbf{TC1} and define the node $13$ and the edge $(7, a, 13)$. We apply \textbf{TC2} to the node $1$ and the relation $((ab) ^ 2, a^2)$ and define the edge $(13, b, 1)$. We also apply \textbf{TC2} to node $7$ and the relation $((ab) ^ 2, a^2)$ and define the edge $(13, a, 7)$. Next, we apply \textbf{TC2} to the node $1$ and the relation $(b^3, b)$ resulting in the pair $(1, 9)$ begin added to $\kappa_{14}$. Finally, we apply \textbf{TC2} to the node $13$ and the relation $((ab) ^ 2, a^2)$ which yields $(4, 13)$ being added to $\kappa_{14}$; see \cref{subfig-felsch-13}.

		\item[Step 14:] In this step $\kappa_{14}$ is the least equivalence relation containing $\{(1, 9), (4, 13)\}$. Taking the quotient gives graph $\Gamma_{15}$. In $\Gamma_{15}$ there exist two edges $(4, a, 7)$ and $(4, a, 8)$ with the same source and label, and so the pair $(7, 8)$ is added to $\kappa_{15}$; see \cref{subfig-felsch-14}.

		\item[Step 15:]  Taking the quotient of $\Gamma_{15}$ in \textbf{F2} by $\kappa_{15}$ yields the graph in \cref{subfig-felsch-15}.

		\item[Step 16:] We apply \textbf{TC1} and define the node $14$ and edge $(10, b, 14)$; see \cref{subfig-felsch-16}.

		\item[Step 17:] We apply \textbf{TC1} and define the node $15$ and the edge $(11, a, 15)$. Applying \textbf{TC2} to $2$ and the relation $((ab) ^ 2, a^2)$ leads to the definition of the edge $(15, b, 10)$ and applying \textbf{TC2} to $11$ and the relation $((ab) ^ 2, a^2)$ leads to the definition of edge $(15, a, 11)$. Finally, an application of \textbf{TC2} to $15$ and the relation $(b^3, b)$ leads to the definition of the node $(14, b, 10)$; see \cref{subfig-felsch-17}.

		\item[Step 18:] The node $16$ and the edge $(11, b, 16)$ are defined. After an application of \textbf{TC2} to $5$ and $(b^3, b)$ leads to the definition of $(16, b, 11)$; see \cref{subfig-felsch-18}.

		\item[Step 19:] The node $17$ and the edge $(12, a, 17)$ are defined. An application of \textbf{TC2} to $6$ and $(b^3, b)$ leads to the definition of $(17, a, 12)$; see \cref{subfig-felsch-19}.

		\item[Step 20:] We apply \textbf{TC1} and the node $18$ and the edge $(12, b, 18)$ are defined; see \cref{subfig-felsch-20}.

		\item[Step 21:] We apply \textbf{TC1} and the node $19$ and the edge $(14, a, 19)$ are defined. Applying \textbf{TC2} to $5$ and the relation $((ab) ^ 2, a^2)$ leads to the definition of the edge $(19, b, 5)$ and applying  \textbf{TC2} to $14$ and the relation $((ab) ^ 2, a^2)$ leads to the definition of edge $(19, a, 14)$. We apply \textbf{TC2} to $19$ and $(b^3, b)$ and to node $19$ and $((ab) ^ 2, a^2)$ yielding the pairs $(5, 16)$ and $(11, 19)$ are added to $\kappa_{22}$; see \cref{subfig-felsch-21}.

		\item[Step 22:] Taking the quotient of $\Gamma_{22}$ by $\kappa_{22}$ gives the graph $\Gamma_{23}$. In $\Gamma_{23}$ there are the edges $(11, a, 14)$ and $(11, a, 15)$ with source $11$ and label $a$ and hence the pair $(14, 15)$ is added to $\kappa_{23}$; see \cref{subfig-felsch-22}.

		\item[Step 23:] Taking the quotient gives graph $\Gamma_{24}$; see \cref{subfig-felsch-23}.

		\item[Step 24:] We apply \textbf{TC1} so that the node $20$ and the edge $(17, b, 20)$ are defined; see \cref{subfig-felsch-24}.

		\item[Step 25:] We apply \textbf{TC1} and the node $21$ and the edge $(18, a, 21)$ are defined. We apply \textbf{TC2} to $6$ and $((ab) ^ 2, a^2)$ and define the edge $(21, b, 17)$. We also apply \textbf{TC2} to $18$ and $((ab) ^ 2, a^2)$ and define $(21, a, 18)$. Finally, we apply \textbf{TC2} to $6$ and $(b^3, b)$ and we define $(20, b, 17)$; see \cref{subfig-felsch-25}.

		\item[Step 26:] We apply \textbf{TC1} and define $22$ and $(18, b, 22)$. Next, we apply \textbf{TC2} to $12$ and $(b^3, b)$. This leads to the definition of $(22, b, 18)$; see \cref{subfig-felsch-26}.

		\item[Step 27:]  We apply \textbf{TC1} and define $23$ and $(20, a, 23)$. We apply \textbf{TC2} to $12$ and $((ab) ^ 2, a^2)$ and define $(23, b, 12)$. We also apply \textbf{TC2} to $20$ and $((ab) ^ 2, a^2)$ and define $(23, a, 20)$. Finally, we apply \textbf{TC2} to $23$ and $(b^3, b)$ and to $23$ and $((ab) ^ 2, a^2)$ yielding the pairs $(12, 22)$ and $(18, 23)$ in $\kappa_{28}$; see \cref{subfig-felsch-27}.

		\item[Step 28:]
			Taking the quotient of $\Gamma_{28}$ by $\kappa_{28}$ gives $\Gamma_{29}$. In $\Gamma_{29}$ there exist two edges $(18, a, 20)$ and $(18, a, 21)$ labelled $a$ leaving $b^2ab$ and hence the pair $(20, 21)$ is added to $\kappa_{29}$; see \cref{subfig-felsch-28}.

		\item[Step 29:] Taking the quotient of $\Gamma_{29}$ by $\kappa_{29}$ gives the graph $\Gamma_{30}$; see \cref{subfig-felsch-29}.

			After Step 29, $\Gamma_{30}$ is complete, deterministic, and compatible with $R$. Hence the enumeration terminates.
	\end{description}

	\begin{table}\centering
		\renewcommand{\arraystretch}{1.4}
		\begin{tabular}{c|c|c|c|c|c|c|c|c|c|c|c}
			0             & 1             & 2              & 3          & 4               & 5           & 6  & 7 & 8 & 9 & 10
			              & 11                                                                                                \\ \hline
			$\varepsilon$ & $a$           & $b$            & $a^2$      &
			$ab$          & $ba$          & $b ^ 2$        & $a ^ 2b$   & $aba$           & $ab ^ 2$    &
			$b
			a ^ 2$        & $bab$                                                                                             \\\hline\hline
			12            & 13            & 14             & 15         & 16
			              & 17            & 18             & 19         & 20              & 21          & 22
			              & 23                                                                                                \\ \hline
			$b^2a$        & $a ^ 2ba$     & $b a ^ 2 b$    & $(ba) ^ 2$ & $bab ^
			2$            & $b ^ 2 a ^ 2$ & $b ^ 2 ab$     & $ba ^ 2ba$ & $b ^ 2 a ^ 2 b$ & $b ^ 2 aba$
			              & $b ^ 2ab ^ 2$ & $b ^ 2 a^ 2ba$
		\end{tabular}
		\caption{A word labelling a path from $0$ to each node in
			\cref{ex-felsch-A1}; see \cref{fig-ex-felsch-A1-a}, \cref{fig-ex-felsch-A1-b}, \cref{fig-ex-felsch-A1-c},
			\cref{fig-ex-felsch-A1-d}, and \cref{fig-ex-felsch-A1-e}.}
		\label{table-ex-felsch-A1}
	\end{table}
	\begin{figure}
		\begin{subfigure}{0.24\textwidth}
			\centering
			
    \includegraphics{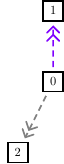}

			\caption{Steps 1 \& 2.}
			\label{subfig-felsch-2}
		\end{subfigure}
		\begin{subfigure}{0.24\textwidth}
			\centering
			
    \includegraphics{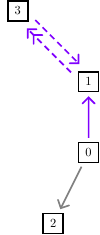}

			\caption{Step 3.}
			\label{subfig-felsch-3}
		\end{subfigure}
		\begin{subfigure}{0.24\textwidth}
			\centering
			
    \includegraphics{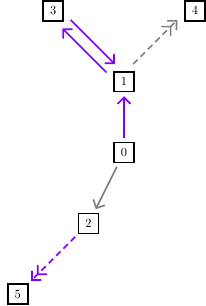}

			\caption{Step 4 \& 5.}
			\label{subfig-felsch-5}
		\end{subfigure}
		\begin{subfigure}{0.24\textwidth}
			\centering
			
    \includegraphics{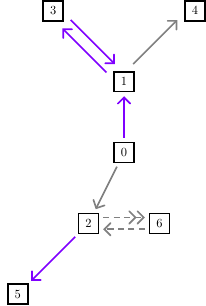}

			\caption{Step 6.}
			\label{subfig-felsch-6}
		\end{subfigure}
		\vspace{\baselineskip}

		\begin{subfigure}{0.33\textwidth}
			\centering
			
    \includegraphics{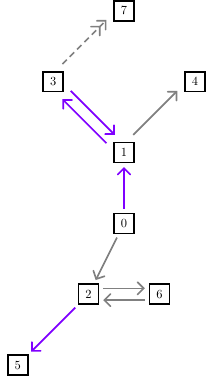}

			\caption{Step 7.}
			\label{subfig-felsch-7}
		\end{subfigure}
		\begin{subfigure}{0.33\textwidth}
			\centering
			
    \includegraphics{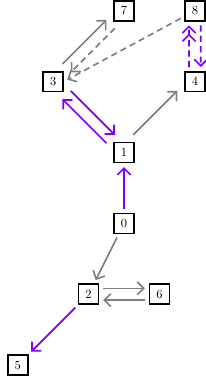}

			\caption{Step 8.}
			\label{subfig-felsch-8}
		\end{subfigure}
		\begin{subfigure}{0.33\textwidth}
			\centering
			
    \includegraphics{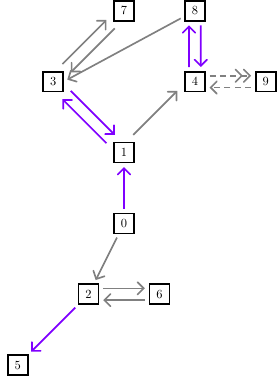}

			\caption{Step 9.}
			\label{subfig-felsch-9}
		\end{subfigure}
		\vspace{\baselineskip}

		\begin{subfigure}{0.33\textwidth}
			\centering
			
    \includegraphics{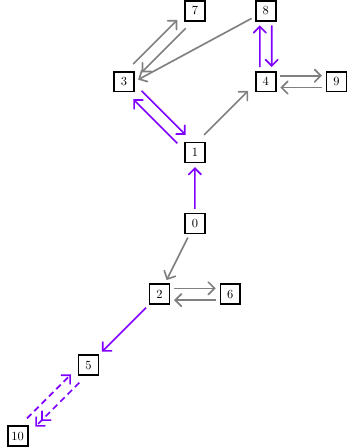}

			\caption{Step 10.}
			\label{subfig-felsch-10}
		\end{subfigure}
		\begin{subfigure}{0.33\textwidth}
			\centering
			
    \includegraphics{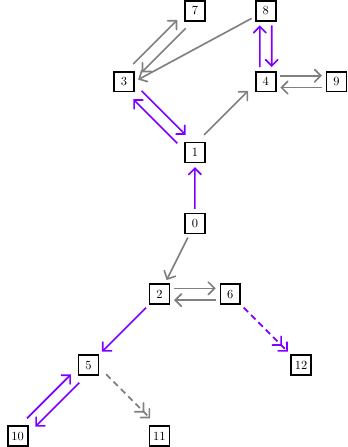}

			\caption{Steps 11 \& 12.}
			\label{subfig-felsch-12}
		\end{subfigure}
		\begin{subfigure}{0.33\textwidth}
			\centering
			
    \includegraphics{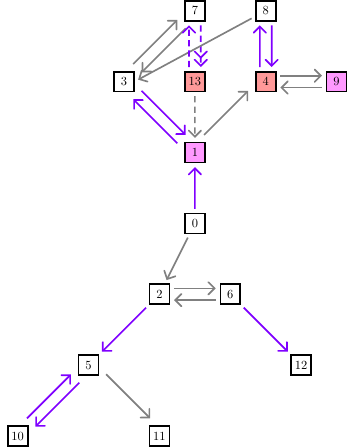}

			\caption{Step 13.}
			\label{subfig-felsch-13}
		\end{subfigure}
		\caption{The output $(\Gamma_{i}, \kappa_{i})$ for $i = \{3, 4, 6, 7, 8, 9, 10, 11, 13, 14\}$ in the Felsch
			Strategy in \cref{ex-felsch-1}. Purple arrows correspond to $a$, gray to $b$, shaded nodes of the same colour belong to $\kappa_{i}$, and unshaded nodes belong to singleton classes.
			A dashed edge with a double arrowhead indicates the edge being defined in \textbf{TC1}, a dashed edge with a single arrowhead denotes a new edge obtained from \textbf{TC2} or \textbf{TC3}, solid edges correspond to edges that existed at the previous step.
		}
		\label{fig-ex-felsch-A1-a}
	\end{figure}
	\begin{figure}
		\begin{subfigure}{0.33\textwidth}
			\centering
			
    \includegraphics{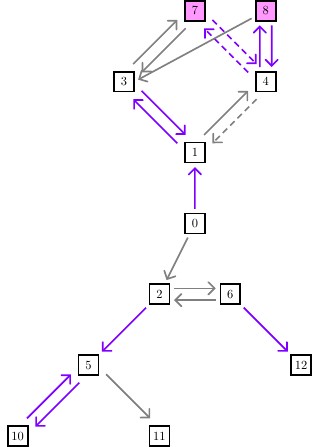}

			\caption{Step 14.}
			\label{subfig-felsch-14}
		\end{subfigure}
		\begin{subfigure}{0.33\textwidth}
			\centering
			
    \includegraphics{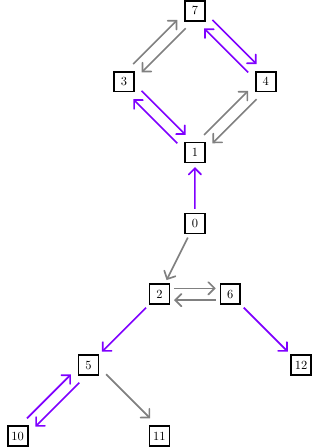}

			\caption{Step 15.}
			\label{subfig-felsch-15}
		\end{subfigure}
		\begin{subfigure}{0.33\textwidth}
			\centering
			
    \includegraphics{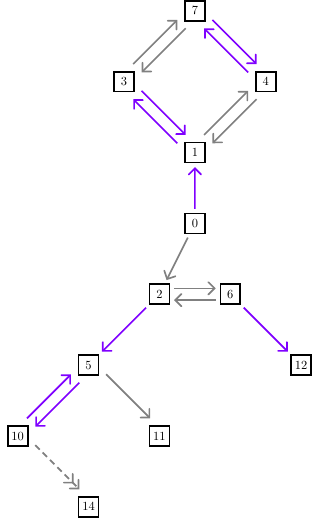}

			\caption{Step 16.}
			\label{subfig-felsch-16}
		\end{subfigure}
		\vspace{\baselineskip}

		\begin{subfigure}{0.33\textwidth}
			\centering
			
    \includegraphics{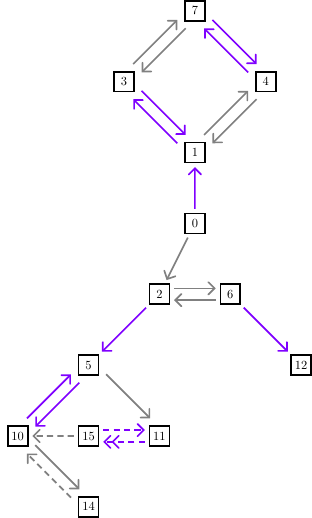}

			\caption{Step 17.}
			\label{subfig-felsch-17}
		\end{subfigure}
		\begin{subfigure}{0.33\textwidth}
			\centering
			
    \includegraphics{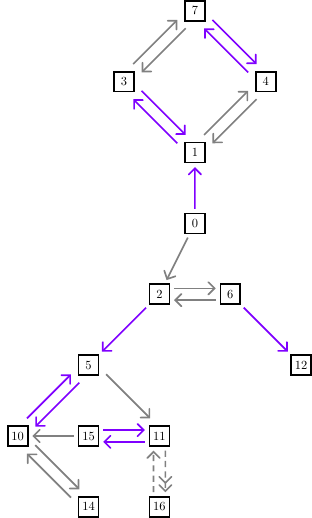}

			\caption{Step 18.}
			\label{subfig-felsch-18}
		\end{subfigure}
		\begin{subfigure}{0.33\textwidth}
			\centering
			
    \includegraphics{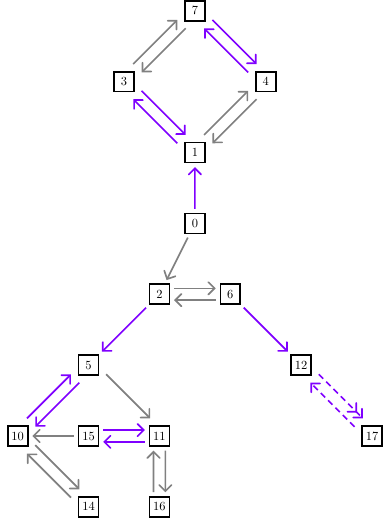}

			\caption{Step 19.}
			\label{subfig-felsch-19}
		\end{subfigure}
		\caption{The output $(\Gamma_{i}, \kappa_{i})$  for $i \in \{15, \ldots, 20\}$ in the Felsch
			strategy in \cref{ex-felsch-1}. Purple arrows correspond to $a$, gray to $b$, shaded nodes of the same colour belong to $\kappa_{i}$, and unshaded nodes belong to singleton classes.
			A dashed edge with a double arrowhead indicates the edge being defined in \textbf{F1}, a dashed edge with a single arrowhead denotes an edge that is obtained from \textbf{F2}, solid edges correspond to edges that existed at the previous step.
		}
		\label{fig-ex-felsch-A1-b}
	\end{figure}

	\begin{figure}
		\begin{subfigure}{0.49\textwidth}
			\centering
			
    \includegraphics{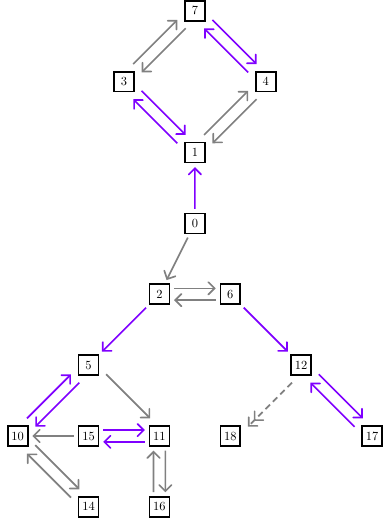}

			\caption{Step 20.}
			\label{subfig-felsch-20}
		\end{subfigure}
		\begin{subfigure}{0.49\textwidth}
			\centering
			
    \includegraphics{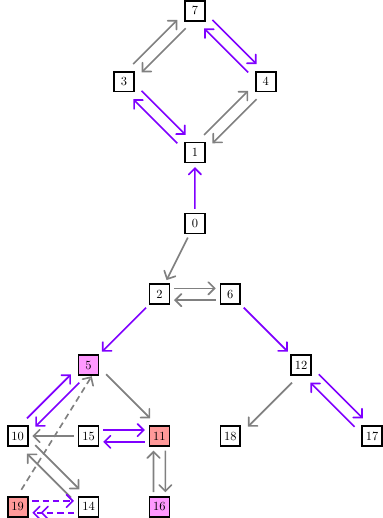}

			\caption{Step 21.}
			\label{subfig-felsch-21}
		\end{subfigure}
		\vspace{\baselineskip}

		\begin{subfigure}{0.49\textwidth}
			\centering
			
    \includegraphics{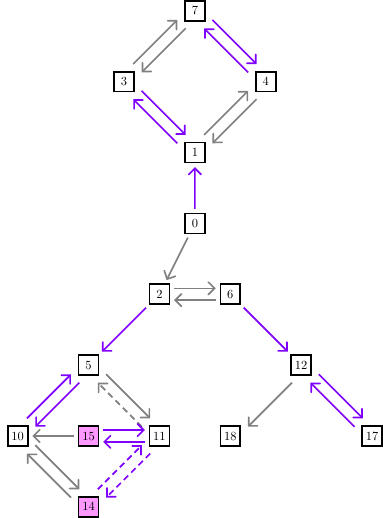}

			\caption{Step 22.}
			\label{subfig-felsch-22}
		\end{subfigure}
		\begin{subfigure}{0.49\textwidth}
			\centering
			
    \includegraphics{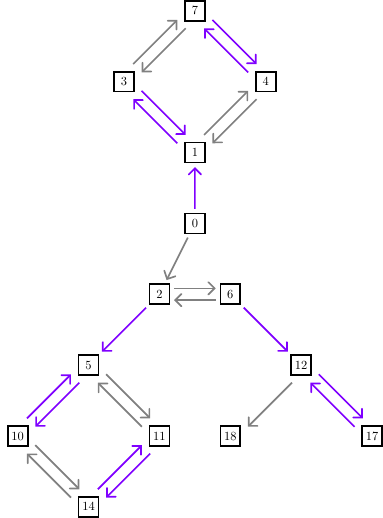}

			\caption{Step 23.}
			\label{subfig-felsch-23}
		\end{subfigure}
		\caption{The output $(\Gamma_{i}, \kappa_{i})$ for $i \in \{21, 22, 23, 24\}$ in the Felsch
			strategy in \cref{ex-felsch-1}. Purple arrows correspond to $a$, gray to $b$, shaded nodes of the same colour belong to $\kappa_{i}$, and unshaded nodes belong to singleton classes.
			A dashed edge with a double arrowhead indicates the edge being defined in \textbf{TC1}, a dashed edge with a single arrowhead denotes an edge that is obtained from \textbf{TC2} or \textbf{TC3}, solid edges correspond to edges that existed at the previous step.
		}
		\label{fig-ex-felsch-A1-c}
	\end{figure}

	\begin{figure}
		\begin{subfigure}{0.49\textwidth}
			\centering
			
    \includegraphics{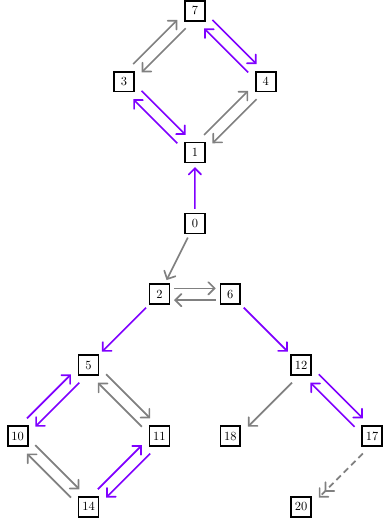}

			\caption{Step 24.}
			\label{subfig-felsch-24}
		\end{subfigure}
		\begin{subfigure}{0.49\textwidth}
			\centering
			
    \includegraphics{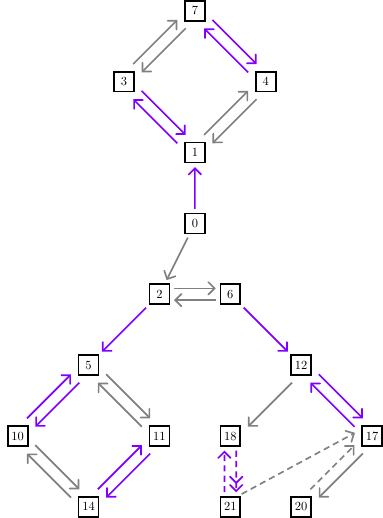}

			\caption{Step 25.}
			\label{subfig-felsch-25}
		\end{subfigure}
		\vspace{\baselineskip}

		\begin{subfigure}{0.49\textwidth}
			\centering
			
    \includegraphics{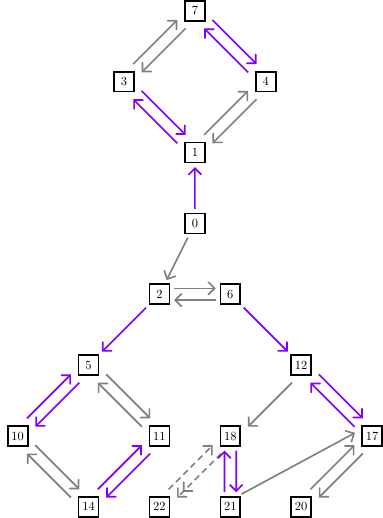}

			\caption{Step 26.}
			\label{subfig-felsch-26}
		\end{subfigure}
		\begin{subfigure}{0.49\textwidth}
			\centering
			
    \includegraphics{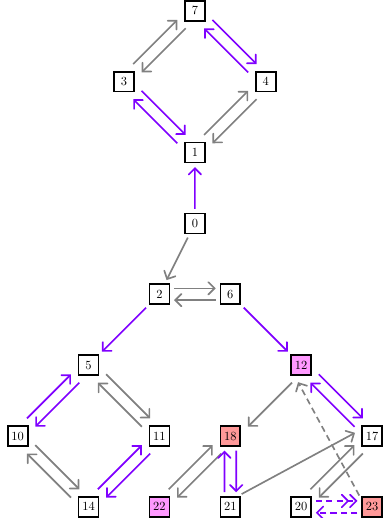}

			\caption{Step 27.}
			\label{subfig-felsch-27}
		\end{subfigure}
		\caption{The output $(\Gamma_{i}, \kappa_{i})$ for $i \in \{25, \ldots, 28\}$ in the Felsch
			strategy for \cref{ex-felsch-1}. Purple arrows correspond to $a$, gray to $b$, shaded nodes of the same colour belong to $\kappa_{i}$, and unshaded nodes belong to singleton classes.
			A dashed edge with a double arrowhead indicates the edge being defined in \textbf{TC1}, a dashed edge with a single arrowhead denotes an edge that is obtained from \textbf{TC2} or \textbf{TC3}, solid edges correspond to edges that existed at the previous step.
		}
		\label{fig-ex-felsch-A1-d}
	\end{figure}
	\begin{figure}
		\begin{subfigure}{0.49\textwidth}
			\centering
			
    \includegraphics{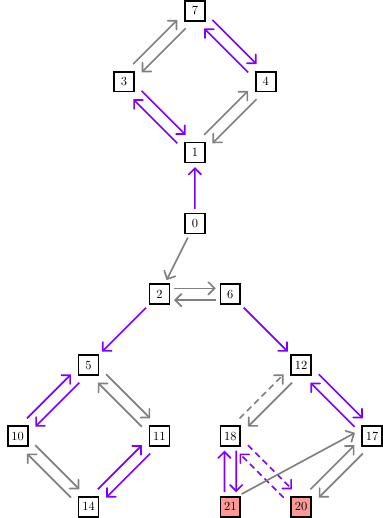}

			\caption{Step 28.}
			\label{subfig-felsch-28}
		\end{subfigure}
		\begin{subfigure}{0.49\textwidth}
			\centering
			
    \includegraphics{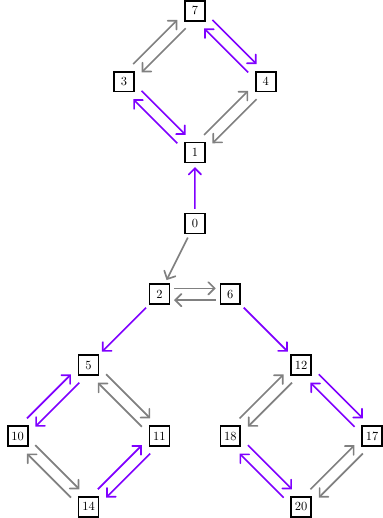}

			\caption{Step 29.}
			\label{subfig-felsch-29}
		\end{subfigure}
		\caption{The output $(\Gamma_{i}, \kappa_{i})$ for $i = 29$ and $30$ in the Felsch
			strategy of \cref{ex-felsch-1}. Purple arrows correspond to $a$, gray to $b$, shaded nodes of the same colour belong to $\kappa_{i}$, and unshaded nodes belong to singleton classes.
			A dashed edge with a double arrowhead indicates the edge being defined in \textbf{TC1}, a dashed edge with a single arrowhead denotes an edge that is obtained from \textbf{TC2} or \textbf{TC3}, solid edges correspond to edges that existed at the previous step.}
		\label{fig-ex-felsch-A1-e}
	\end{figure}

\end{ex}

\begin{ex}
	\label{ex-bool-mat-presentation-hlt}
	In this example, we perform the HLT strategy to find the right Cayley graph of the monoid defined by the presentation:
	\[
		\langle a,b,c \; | \; a^2 = ac,\  b^2= b,\  a^ 2 = ca,\  bc = cb,\ a^2=
		c^2,\  a^3= a^2,\  a^2 = aba \rangle
	\]
with $a<b<c$.
	Each step of this enumeration corresponds to either: at least one application
	of \textbf{TC1} (in \textbf{HLT1}) followed by multiple applications of
	\textbf{TC2} (in the same \textbf{HLT1} step as the application of
	\textbf{TC1} or subsequent applications of \textbf{HLT1} where \textbf{TC1}
	is not invoked); or a single application of \textbf{TC3} (in \textbf{HLT2});
	see \cref{fig-ex-bool-mat-presentation-hlt-1} and
	\cref{fig-ex-bool-mat-presentation-hlt-2} for the word graphs $\Gamma_i$
	and equivalence relation $\kappa_i$ after every step $i$; see
	also~\cref{table-ex-bool-mat-presentation-hlt}.

	\begin{description}
		\item [Step 1:] At this step we apply \textbf{HLT1} to the node
		      $0$ and relation $(a ^ 2, ac)$. \textbf{TC1} yields the new
		      nodes $1$ and $2$, and the edges $(0, a, 1)$ and $(1, a, 2)$;
		      \textbf{TC2}(b) then yields the edge $(1, c, 2)$; see
		      \cref{subfig-bool-mat-hlt-1}.

		\item [Step 2:] We apply \textbf{HLT1} to the node
		      $0$ and relation $(b ^ 2, b)$. \textbf{TC1} produces the new
		      node $3$ and edge $(0, b, 3)$; \textbf{TC2}(a) then yields the
		      edge $(3, b, 3)$; see \cref{subfig-bool-mat-hlt-2}.

		\item [Step 3:] \textbf{HLT1} is applied to $0$ and $(a ^ 2, ca)$;
		      \textbf{TC1} generates the node $4$ and edge $(0, c, 4)$; \textbf{TC2}(b)
		      gives us the edge $(4, a, 2)$; see \cref{subfig-bool-mat-hlt-3}.

		\item [Steps 4-6:] Step 4 is \textbf{HLT1} applied to $0$ and $(bc,
			      cb)$; \textbf{TC1} gives the new node $5$ and edge $(3, c, 5)$;
		      \textbf{TC2}(b) yields the edge $(4, b, 5)$. Step 5 is \textbf{HLT1}
		      applied to $0$ and $(a ^ 2, c ^ 2)$. There is already a path
		      from $0$ labelled by $a ^ 2$ and another by $c$, and so
		      \textbf{TC1} is not invoked, but \textbf{TC2}(b) yields the edge $(4, c,
			      2)$. Step 6: \textbf{HLT1} applied to $0$ and $(a ^ 3, a ^
			      2)$, \textbf{TC1} is not invoked again, and \textbf{TC2}(a) yields the
		      edge $(2, a, 2)$; see \cref{subfig-bool-mat-hlt-6}.

		\item [Steps 7-9:] Step 7: \textbf{HLT1} applied to $0$ and $(a ^ 2,
			      aba)$; \textbf{TC1} adds the node $6$ and edge $(1, b, 6)$;
		      \textbf{TC2}(b) yields $(6, a, 2)$. Step 8:  \textbf{HLT1} applied
		      to $1$ and $(a ^ 2, ac)$; \textbf{TC1} does not apply;
		      \textbf{TC2}(b) yields $(2, c, 2)$. Step 9: \textbf{HLT1} applied
		      to $1$ and $(b ^ 2, b)$;  \textbf{TC1} does not apply;
		      \textbf{TC2}(a) yields $(6, b, 6)$.
		      See \cref{subfig-bool-mat-hlt-9}.

		\item [Steps 10-13:] Step 10: \textbf{HLT1} applied to $1$ and $(bc, cb)$;
		      \textbf{TC1} adds node $7$ and edge $(6, c, 7)$. Step 11:
		      \textbf{HLT1} applied to $1$ and $(a ^ 2, aba)$; \textbf{TC1} does not
		      apply; \textbf{TC2}(b) yields edge $(7, a, 2)$.
		      Step 12: \textbf{HLT1} applied to $2$ and $(b ^ 2, b)$; \textbf{TC1}
		      does not apply; \textbf{TC2}(a) yields edge $(7, b, 7)$. Step 13:
		      \textbf{HLT1} applied to $2$ and $(bc, cb)$; \textbf{TC1} does not apply;
		      \textbf{TC2}(a) yields edge $(7, c, 7)$. See
		      \cref{subfig-bool-mat-hlt-13}.

		\item [Steps 14-18:] Step 14: \textbf{HLT1} applied to $3$ and $(a ^ 2,
			      ac)$; \textbf{TC1} yields nodes $8$ and $9$ and edges $(3, a, 8)$,
		      $(8, a, 9)$; \textbf{TC2}(b) yields edge $(8, c, 9)$.
		      Step 15: \textbf{HLT1} for $3$ and $(a ^ 2, ca)$; \textbf{TC1} not
		      applied; \textbf{TC2}(b) yields edge $(5, a, 9)$.
		      Step 16: \textbf{HLT1} for $b$ and $(bc, cb)$; \textbf{TC1} does
		      not apply; \textbf{TC2}(b) yields $(5, b, 5)$.
		      Step 17: \textbf{HLT1} for $b$ and $(a ^ 2, c ^ 2)$;
		      \textbf{TC1} does not apply; \textbf{TC2}(b) yields $(5, c, 9)$.
		      Step 18: \textbf{HLT1} for $3$ and $(a ^ 3, a ^ 2)$;
		      \textbf{TC1} does not apply; \textbf{TC2}(a) yields $(9, a, 9)$.
		      See \cref{subfig-bool-mat-hlt-18}.

		\item [Steps 19-20:]
		      Step 19: \textbf{HLT1} for $3$ and $(a ^ 2, aba)$: \textbf{TC1}
		      adds node $10$ and edge $(8, b, 10)$; \textbf{TC2}(b) yields
		      $(10, a, 9)$. Step 20: \textbf{HLT1} for $4$ and $(bc, cb)$:
		      \textbf{TC1} does not apply; \textbf{TC2}(c) indicates $\kappa_{20}$ is
		      the  least equivalence containing $(7, 9)$. See
		      \cref{subfig-bool-mat-hlt-20}.

		\item [Step 21:] This step is an application of \textbf{TC3} to
		      produce $\Gamma_{21} := \Gamma_{20}/ \kappa_{20}$ resulting in
		      the new edges $(7, a, 7)$, $(5, a, 7)$, $(5, c, 7)$, $(8, a,
			      7)$, $(8, c, 7)$, and $(10, b, 7)$. The edge
		      $(7, a, 2)$ is also an edge in $\Gamma_{21}$ meaning that
		      $\kappa_{21}$ is the least equivalence containing $(2, 7)$.
		      See \cref{subfig-bool-mat-hlt-21}.

		\item [Step 22:] This step is also an application of \textbf{TC3} to
		      produce $\Gamma_{22} := \Gamma_{21} / \kappa_{21}$, the new edges
		      created are $(2, b, 2)$, $(10, a, 2)$, $(8, a, 2)$,
		      $(8, c, 2)$, $(5, a, 2)$, and $(5, c, 2)$.
		      See \cref{subfig-bool-mat-hlt-22}.

		\item [Steps 23-24:]
		      Step 23: \textbf{HLT1} applied to $(b ^ 2, b)$ from $8$; \textbf{TC1}
		      does not apply; \textbf{TC2}(a) yields $(10, b, 10)$. Step 24:
		      \textbf{HLT1} applied to $8$ and $(bc, cb)$ yields $(10, c, 2)$.
		      See \cref{subfig-bool-mat-hlt-24}.
	\end{description}
	After Step 24 the graph $\Gamma_{25}$ is complete and compatible with the
	relations in the presentation, and $\kappa_{25}$ is trivial, the enumeration
	terminates, and we see that the semigroup defined by the presentation is
	isomorphic to that defined in \cref{ex-cayley-digraph}.

	\begin{table}\centering
		\begin{tabular}{l|l|l|l|l|l|l|l|l|l|l}
			0             & 1   & 2       & 3   & 4   & 5    & 6    & 7     & 8    & 9      & 10    \\ \hline
			$\varepsilon$ & $a$ & $a ^ 2$ & $b$ & $c$ & $bc$ & $ab$ & $abc$ & $ba$ & $ba^2$ & $bab$
		\end{tabular}
		\caption{A word labelling a path from $0$ to each node in the right Cayley
			graph on the monoid $M$ from \cref{ex-bool-mat-presentation-hlt}; see
			\cref{fig-ex-bool-mat-presentation-hlt-1} and
			\cref{fig-ex-bool-mat-presentation-hlt-2}.}
		\label{table-ex-bool-mat-presentation-hlt}
	\end{table}

	\begin{figure}
		\begin{subfigure}{0.33\textwidth}
			\centering
			
    \includegraphics{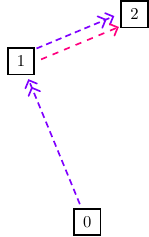}

			\caption{Step 1.}
			\label{subfig-bool-mat-hlt-1}
		\end{subfigure}
		\begin{subfigure}{0.33\textwidth}
			\centering
			
    \includegraphics{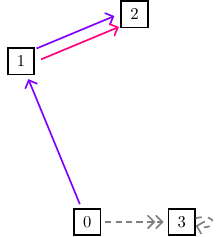}

			\caption{Step 2.}
			\label{subfig-bool-mat-hlt-2}
		\end{subfigure}
		\begin{subfigure}{0.33\textwidth}
			\centering
			
    \includegraphics{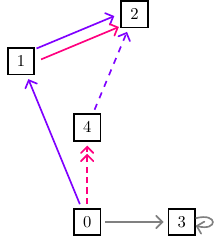}

			\caption{Step 3.}
			\label{subfig-bool-mat-hlt-3}
		\end{subfigure}
		\vspace{\baselineskip}

		\begin{subfigure}{0.33\textwidth}
			\centering
			
    \includegraphics{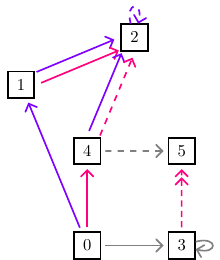}

			\caption{Steps 4 to 6.}
			\label{subfig-bool-mat-hlt-6}
		\end{subfigure}
		\begin{subfigure}{0.33\textwidth}
			\centering
			
    \includegraphics{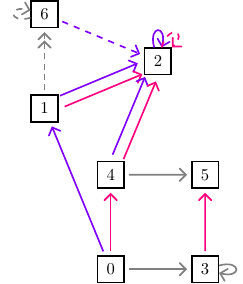}

			\caption{Steps 7 to 9.}
			\label{subfig-bool-mat-hlt-9}
		\end{subfigure}
		\begin{subfigure}{0.33\textwidth}
			\centering
			
    \includegraphics{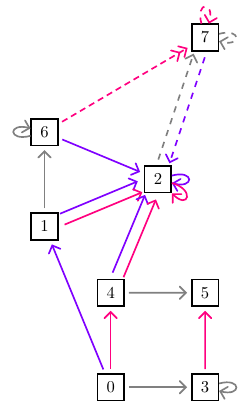}

			\caption{Steps 10 to 13.}
			\label{subfig-bool-mat-hlt-13}
		\end{subfigure}

		\begin{subfigure}{0.49\textwidth}
			\centering
			
    \includegraphics{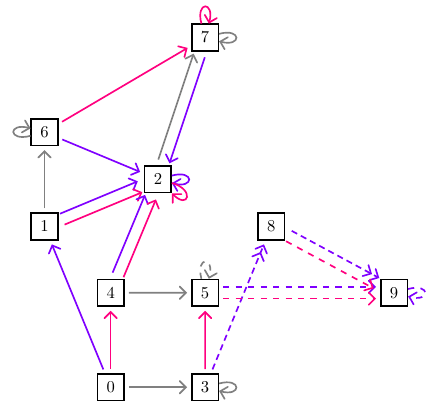}

			\caption{Steps 14 to 18.}
			\label{subfig-bool-mat-hlt-18}
		\end{subfigure}
		\begin{subfigure}{0.49\textwidth}
			\centering
			
    \includegraphics{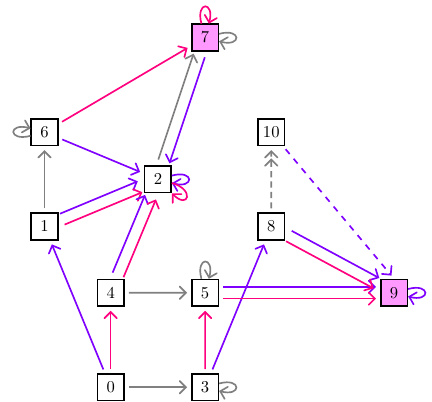}

			\caption{Steps 19 \& 20.}
			\label{subfig-bool-mat-hlt-20}
		\end{subfigure}
		\caption{The output $(\Gamma_{i}, \kappa_{i})$ for $i = 1, \ldots,  20$ in
			\cref{ex-bool-mat-presentation-hlt}. Purple arrows correspond to $a$,
			gray to $b$, pink to $c$, shaded nodes of the same colour belong to
			$\kappa_{i}$, and unshaded nodes belong to singleton classes. A dashed
			edge with a double arrowhead indicates the edge being defined in
			\textbf{TC1}, a dashed edge with a single arrowhead denotes an edge that
			is obtained from \textbf{TC2} or \textbf{TC3}, solid edges correspond to
			edges that existed at the previous step.}
		\label{fig-ex-bool-mat-presentation-hlt-1}
	\end{figure}

	\begin{figure}
		\begin{subfigure}{0.49\textwidth}
			\centering
			
    \includegraphics{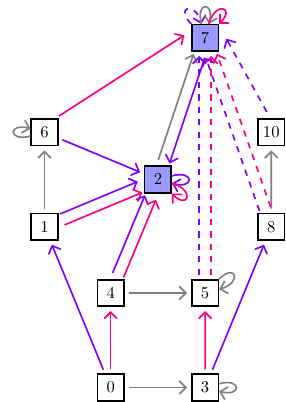}

			\caption{Step 21.}
			\label{subfig-bool-mat-hlt-21}
		\end{subfigure}
		\begin{subfigure}{0.49\textwidth}
			\centering
			
    \includegraphics{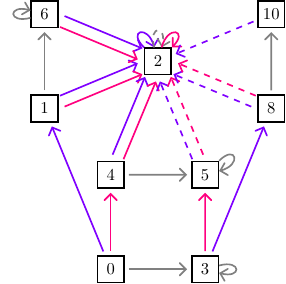}

			\caption{Step 22.}
			\label{subfig-bool-mat-hlt-22}
		\end{subfigure}
		\vspace{\baselineskip}

		\begin{subfigure}{0.98\textwidth}
			\centering
			
    \includegraphics{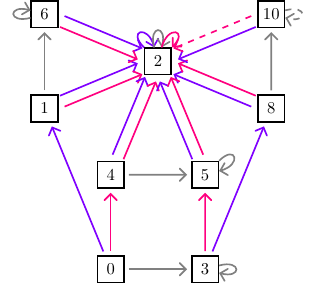}

			\caption{Steps 23 \& 24.}
			\label{subfig-bool-mat-hlt-24}
		\end{subfigure}
		\caption{The output $(\Gamma_{i}, \kappa_{i})$ for $i = 21, \ldots, 24$ of each step in
			\cref{ex-bool-mat-presentation-hlt}. Purple arrows correspond to $1$,
			gray to $3$, pink to $4$, shaded nodes of the same colour belong to
			$\kappa_{i}$, and unshaded nodes belong to singleton classes.          A
			dashed edge with a double arrowhead indicates the edge being defined in
			\textbf{TC1}, a dashed edge with a single arrowhead denotes an edge that is
			obtained from \textbf{TC2} or \textbf{TC3}, solid edges correspond to edges
			that existed at the previous step.}
		\label{fig-ex-bool-mat-presentation-hlt-2}
	\end{figure}
\end{ex}

\begin{ex}
	\label{ex-bool-mat-presentation}
	In this example, we perform the Felsch strategy to find the right Cayley graph of the monoid defined by the presentation:
	\[\langle a,b,c \; | \;
		ac = a^2,\  b^2= b,\  ca= a^2,\  cb =bc,\
		c^2=  a^2,\  a^3= a^2,\  aba= a^2
		\rangle.
	\]

	\begin{description}
		\item [Step 1:] At this step we apply \textbf{F1} to the node
		      $0$. The node $1$ and the edge $(0, a, 1)$ are defined; see
		      \cref{subfig-bool-mat-felsch-1}.

		\item [Step 2:] We apply \textbf{F1} to the node
		      $0$. The node $2$ and the edge $(0, b, 2)$ are defined. Applying \textbf{F2} to $0$ and the relation $(b^2, b)$ leads to the definition of the edge $(2, b, 2)$; see
		      \cref{subfig-bool-mat-felsch-2}.

		\item [Step 3:] We apply \textbf{F1} to $0$. The node $3$ and the edge $(0, c, 3)$ are defined; see \cref{subfig-bool-mat-felsch-3}.

		\item [Step 4:] \textbf{F1} is applied to $1$ and hence the node $4$ and the edge $(1, a, 4)$ are defined. We apply \textbf{F2} and the following applications of \textbf{TC2} yield new information; applying \textbf{TC2}(b) to $0$ and $(a^2, ac)$ yields the edge $(1, c, 4)$, applying \textbf{TC2}(b) to $0$ and $(a^2, ca)$ yields the edge $(3, a, 4)$, the application of \textbf{TC2}(b) to $0$ and $(a^2, c^2)$ yields the edge $(3, c, 4)$, the application of \textbf{TC2}(a) to $0$ and $(a^3, a^2)$ yields the edge $(4, a, 4)$ and finally the application of \textbf{TC2}(b) to $1$ and $(a^2, ac)$ yields the edge $(4, c, 4)$; see \cref{subfig-bool-mat-felsch-4}.

		\item [Step 5:] We apply \textbf{F1} to $1$ which leads to the definition of the node $5$ and the edge $(1, b, 5)$. We apply \textbf{F2} and the following applications of \textbf{TC2} yield new information; the application of \textbf{TC2}(b) to $0$ and $(a^2, aba)$ yields the edge $(5, a, 4)$ and the application of \textbf{TC2}(a) to $1$ and $(b^2, b)$ yields the edge $(5, b, 5)$; see \cref{subfig-bool-mat-felsch-5}.

		\item [Step 6:] We apply \textbf{F1} to $2$ which leads to the definition of the node $6$ and the edge $(2, a, 6)$; see \cref{subfig-bool-mat-felsch-6}.

		\item [Step 7:] We apply \textbf{F1} to $2$ which leads to the definition of the node $7$ and the edge $(2, c, 7)$. We apply \textbf{F2} and the following applications of \textbf{TC2} yield new information; applying \textbf{TC2}(b) to $0$ and $(bc, cb)$ yields the edge $(3, b, 7)$ and applying \textbf{TC2}(b) to $3$ and $(bc, cb)$ yields the edge $(7, b, 7)$; see \cref{subfig-bool-mat-felsch-7}.

		\item [Step 8:] We apply \textbf{F1} and the node $8$ and the edge $(4, b, 8)$ are defined. We apply \textbf{F2} and the following applications of \textbf{TC2} yield new information; applying \textbf{TC2}(a) to $1$ and $(bc, cb)$ yields the edge $(5, c, 8)$, applying \textbf{TC2}(b) to $1$ and $(a^2, aba)$ yields the edge $(8, a, 4)$, applying \textbf{TC2}(a) to $3$ and $(bc, cb)$ yields the edge $(7, c, 8)$, applying \textbf{TC2}(a) to $4$ and $(b^2, b)$  yields the edge $(8, b, 8)$ and applying \textbf{TC2}(a) to $4$ and $(bc, cb)$ yields the edge $(8, c, 8)$. In addition, applying \textbf{TC2}(c) to $5$ and $(a^2, c^2)$ as well as applying \textbf{TC2}(c) to $8$ and $(a^2, c^2)$ indicates $\kappa_{9}$ is the least equivalence containing $(8, 4)$ and $\Delta_{N_9}$; see \cref{subfig-bool-mat-felsch-8}.

		\item [Step 9:] This step is an application of \textbf{TC3} to
		      produce $\Gamma_{10} := \Gamma_{9}/ \kappa_{9}$ resulting in
		      the new edges $(5, c, 4), (4, b, 4)$; see \cref{subfig-bool-mat-felsch-9}.

		\item [Step 10:] We apply \textbf{F1}. The node $9$ and the edge $(6, a, 9)$ are defined and an application of \textbf{F2} follows. Applying \textbf{TC2}(b) to $2$ and $(a^2, ac)$ yields the edge $(6, c, 9)$, applying \textbf{TC2}(b) to $2$ and $(a^2, ca)$ yields the edge $(7, a, 9)$, applying \textbf{TC2}(a) to $2$ and $(a^3, a^2)$  yields the edge $(9, a, 9)$ and applying \textbf{TC2}(b) to $6$ and $(a^2, ac)$ yields the edge $(9, c, 9)$. In addition, applying \textbf{TC2}(c) to $2$ and $(a^2, c^2)$ indicates $\kappa_{11}$ is the least equivalence containing $(4, 9)$ and $\Delta_{N_{11}}$; see \cref{subfig-bool-mat-felsch-10}.

		\item [Step 11:] This step is an application of \textbf{TC3} to
		      produce $\Gamma_{12} := \Gamma_{11}/ \kappa_{11}$ resulting in
		      the new edges $(7, a, 4), (6, a, 4), (6, c, 4) $; see \cref{subfig-bool-mat-felsch-11}.

		\item [Step 12:] We apply \textbf{F1} and the node $10$ and the edge $(6, b, 10)$ are defined. We apply \textbf{F2} and the following applications of \textbf{TC2} yield new information; applying \textbf{TC2}(b) to $2$ and $(a^2, aba)$ yields the edge $(10, a, 4)$, applying \textbf{TC2}(a) to $6$ and $(b^2, b)$ yields the edge $(10, b, 10)$ and applying \textbf{TC2}(a) to $6$ and $(bc, cb)$ yields the edge $(10, c, 4)$; see \cref{subfig-bool-mat-felsch-12}.
	\end{description}

	After Step 12 the graph $\Gamma_{13}$ is complete and compatible with the
	relations in the presentation, and $\kappa_{13}$ is trivial, the enumeration
	terminates, and we see that the semigroup defined by the presentation is
	isomorphic to that defined in \cref{ex-cayley-digraph}.

	\begin{table}\centering
		\begin{tabular}{l|l|l|l|l|l|l|l|l|l|l}
			0             & 1   & 2   & 3   & 4     & 5    & 6    & 7    & 8      & 9      & 10    \\ \hline
			$\varepsilon$ & $a$ & $b$ & $c$ & $a^2$ & $ab$ & $ba$ & $bc$ & $a^2b$ & $ba^2$ & $bab$
		\end{tabular}
		\caption{A word labelling a path from $0$ to each node in the right Cayley graph on the monoid $M$ from \cref{ex-bool-mat-presentation}; see \cref{fig-ex-bool-mat-presentation-felsch-A} and \cref{fig-ex-bool-mat-presentation-felsch-B}.}
		\label{table-ex-bool-mat-presentation}
	\end{table}

	\begin{figure}
		\centering
		\begin{subfigure}{0.30\textwidth}
			\centering
			
    \includegraphics{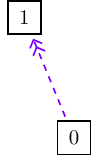}

			\caption{Step 1.}
			\label{subfig-bool-mat-felsch-1}
		\end{subfigure}
		\begin{subfigure}{0.30\textwidth}
			\centering
			
    \includegraphics{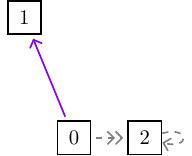}

			\caption{Step 2.}
			\label{subfig-bool-mat-felsch-2}
		\end{subfigure}
		\begin{subfigure}{0.30\textwidth}
			\centering
			
    \includegraphics{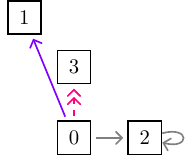}

			\caption{Step 3.}
			\label{subfig-bool-mat-felsch-3}
		\end{subfigure}
		\vspace{\baselineskip}

		\begin{subfigure}{0.30\textwidth}
			\centering
			
    \includegraphics{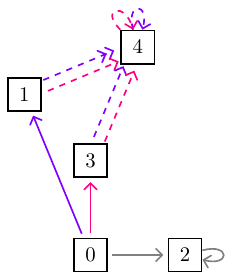}

			\caption{Step 4.}
			\label{subfig-bool-mat-felsch-4}
		\end{subfigure}
		\begin{subfigure}{0.30\textwidth}
			\centering
			
    \includegraphics{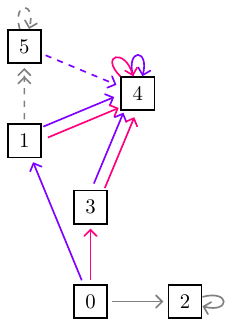}

			\caption{Step 5.}
			\label{subfig-bool-mat-felsch-5}
		\end{subfigure}
		\begin{subfigure}{0.30\textwidth}
			\centering
			
    \includegraphics{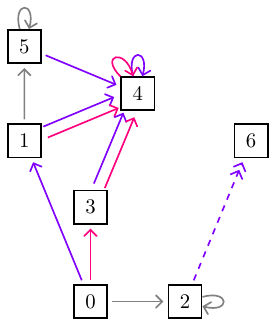}

			\caption{Step 6.}
			\label{subfig-bool-mat-felsch-6}
		\end{subfigure}
		\caption{The output $(\Gamma_{i}, \kappa_{i})$ of each step in
			\cref{ex-bool-mat-presentation}. Purple arrows correspond to $a$, grey to
			$b$, pink to $c$, shaded nodes of the same colour belong to $\kappa_{i}$,
			and unshaded nodes belong to singleton classes.}
		\label{fig-ex-bool-mat-presentation-felsch-A}
	\end{figure}

	\begin{figure}
		\centering
		\begin{subfigure}{0.30\textwidth}
			\centering
			
    \includegraphics{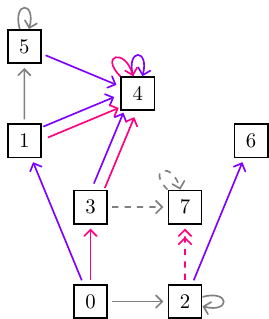}

			\caption{Step 7.}
			\label{subfig-bool-mat-felsch-7}
		\end{subfigure}
		\begin{subfigure}{0.30\textwidth}
			\centering
			
    \includegraphics{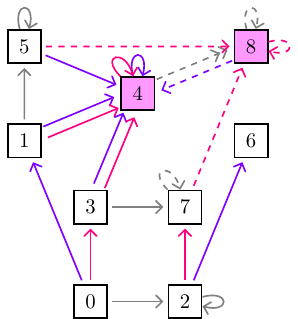}

			\caption{Step 8.}
			\label{subfig-bool-mat-felsch-8}
		\end{subfigure}
		\begin{subfigure}{0.30\textwidth}
			\centering
			
    \includegraphics{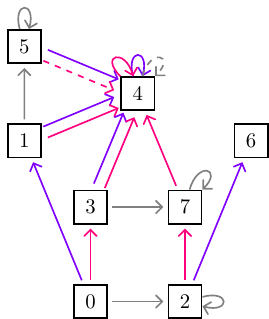}

			\caption{Step 9.}
			\label{subfig-bool-mat-felsch-9}
		\end{subfigure}
		\vspace{\baselineskip}
		\begin{subfigure}{0.30\textwidth}
			\centering
			
    \includegraphics{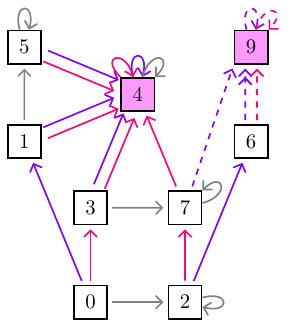}

			\caption{Step 10.}
			\label{subfig-bool-mat-felsch-10}
		\end{subfigure}
		\begin{subfigure}{0.30\textwidth}
			\centering
			
    \includegraphics{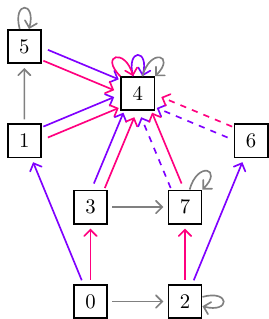}

			\caption{Step 11.}
			\label{subfig-bool-mat-felsch-11}
		\end{subfigure}
		\begin{subfigure}{0.30\textwidth}
			\centering
			
    \includegraphics{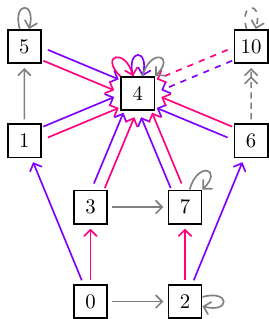}

			\caption{Step 12.}
			\label{subfig-bool-mat-felsch-12}
		\end{subfigure}
		\caption{The output $(\Gamma_{i}, \kappa_{i})$ of each step in
			\cref{ex-bool-mat-presentation}. Purple arrows correspond to $a$, grey to
			$b$, pink to $c$, shaded nodes of the same colour belong to $\kappa_{i}$,
			and unshaded nodes belong to singleton classes.}
		\label{fig-ex-bool-mat-presentation-felsch-B}
	\end{figure}

\end{ex}

\section{Performance comparison}
\label{appendix-benchmarks}
In this appendix the performance of the \libsemigroups implementations of the
HLT and Felsch strategies (as described in this paper) is compared to the
performance of GAP~\cite{GAP4} on a number of examples. Unless otherwise
indicated the times for \libsemigroups are the mean time of 100 runs and the
indicated times for GAP~\cite{GAP4} are the mean of
10 runs. All computations were performed on a 2021 Mac M1 computer with 16GB of
   RAM.

\subsection{Monoids of transformations}

In this section we present some tables of timings for some presentations from
the literature of transformation monoids; see Tables~\ref{table-orient}
and~\ref{table-orient-reverse}.

\subsection{Monoids of partitions}

In this section we present some tables of timings for some presentations from
the literature of monoids of partitions; see
Tables~\ref{table-partition},~\ref{table-singular-brauer},~\ref{table-dual-sym-inv},~\ref{table-uniform}, and~\ref{table-temperley-lieb}.

\subsection{Further finitely presented semigroups and monoids}

In this section we present some tables of timings for some presentations from
the literature of finitely presented semigroups and monoids; see Tables~\ref{table-stellar},~\ref{table-stylic}, and~\ref{table-walker}.

The presentations referred to in \cref{table-walker} are:
\begin{align}\label{eq-walker-1}
  S = \langle a, b, c \mid       & \  a^{14} = a, b^{14} = b, c^{14} = c, a^4ba = b^3, b^4ab = a^3,
  a^4ca = c^3, c^4ac = a^3, b^4cb = c^3, c^4bc = b^3\rangle                                         \\
  \label{eq-walker-2}
  S = \langle a, b \mid          & \ a^{32} = a, b^3 = b, ababa = b, a^{16}ba^4ba^{16}ba^4 \rangle  \\
  \label{eq-walker-3}
  S = \langle a, b \mid          & \ a^{16} = a, b^{16} = b, ab^2 = ba^2 \rangle                    \\
  \label{eq-walker-4}
  S = \langle a, b \mid          & \  a^3 = a, b^6 = b, (abab^4)^7ab^2a = b^2\rangle                \\
  \label{eq-walker-5}
  S = \langle a, b \mid          & \ a^3 = a, b^6 = b, (abab^4)^7ab^2ab^5a^2= b^2\rangle            \\
  \label{eq-walker-6}
  S = \langle a, b \mid          & \ a^3 = a, b^9 = b, abab^7abab^7ab^2ab^8 = b^2\rangle            \\
  \label{eq-walker-7}
  S = \langle a, b, c, d, e \mid & \ a^3 = a, b^3 = b, c^3 = c, d^3 = d,
  e^3 = 3, (ab)^3 = a^2, (bc)^3 = b^2, (cd)^3 = c^2, (de)^3 = d^2,                                  \\
                            \nonumber     & \ ac = ca, ad = da, ae = ea, bd = db, be = eb, ce = ec \rangle   \\
  \label{eq-walker-8}
  S = \langle a, b \mid          & \ a^3 = a, b^{23} = b, ab^{11}ab^2 = b^2a \rangle.
\end{align}

\subsection{Finitely presented groups}

In this section we provide some comparison of the performance of
\libsemigroups, GAP~\cite{GAP4}, and ACE~\cite{Havas1999aa} when applied to
group presentations.  It might be worth noting that \libsemigroups contains no
optimizations whatsoever for group presentations, and that when run in
\libsemigroups each of the presentations given in this section has: an explicit
generator $a ^ {-1}$ for the inverse of each generator $a$ that is not of order
$2$; the relations $a a ^{-1} = a ^ {-1} a = e$ for every generator $a$. The
version of Todd-Coxeter in GAP~\cite{GAP4} is largely written in C, and is
specific to group presentations; similarly, ACE~\cite{Havas1999aa} is written
in C and is specific to groups also. As such it is not surprising that the
performance of \libsemigroups is generally worse that both
ACE~\cite{Havas1999aa} and GAP~\cite{GAP4} when applied to a group
presentation. There are still some cases where \libsemigroups is faster than
GAP~\cite{GAP4}; see \cref{table-ace}.

\begin{align}
   & \begin{aligned}
       \label{eq-ace-2p17-2p14}
       G & =  \langle a, b, c \mid ab^{-1}c^{-1}bac, ba^{-1}c^{-1}ba^2ca^{-1},
       ac^2a^{-1}a^{-1}b^{-1}ab \rangle                                        \\
       H & =  \genset{bc}                                                      \\
     \end{aligned}                                                  \\
   & \begin{aligned}
       \label{eq-ace-2p17-2p3}
       G & = \langle a, b, c\mid ab^{-1}c^{-1}bac, ba^{-1}c^{-1}ba^2ca^{-1},
       ac^2a^{-1}a^{-1}b^{-1}ab \rangle                                      \\
       H & = \genset{bc, a^{-1}b^{-1}(a^{-1})^2bcabc^{-1}}                   \\
     \end{aligned}                                                    \\
   & \begin{aligned}
       \label{eq-ace-2p17-1}
       G & = \langle a, b, c\mid ab^{-1}c^{-1}bac, ba^{-1}c^{-1}ba^2ca^{-1},
       ac^2a^{-1}a^{-1}b^{-1}ab \rangle                                          \\
       H & = \genset{ab ^ {-1}c ^ {-1}bac, ba ^ {-1}c ^ {-1}baaca ^ {-1}, acca ^
       {-1}a ^ {-1}b ^ {-1}ab}                                                   \\
     \end{aligned}                                                \\
   & \begin{aligned}
       \label{eq-ace-2p17-1a}
       G & = \langle a, b, c\mid ab^{-1}c^{-1}bac, ba^{-1}c^{-1}ba^2ca^{-1},
       ac^2a^{-1}a^{-1}b^{-1}ab \rangle                                      \\
       H & = \genset{bc, a^{-1}b^{-1}a^{-1}a^{-1}bcabc^{-1},
       a^{-1}c^3acb^{-1}ca^{-1}}                                             \\
     \end{aligned}                                                    \\
   & \begin{aligned}
       \label{eq-ace-2p17-id}
       G & = \langle a, b, c\mid ab^{-1}c^{-1}bac, ba^{-1}c^{-1}ba^2ca^{-1},
       ac^2a^{-1}a^{-1}b^{-1}ab \rangle                                      \\
       H & = \genset{\varepsilon}                                            \\
     \end{aligned}                                                    \\
   & \begin{aligned}
       \label{eq-ace-2p18}
       G & = \langle a, b, c, x \mid ab^{-1}c^{-1}bac, ba^{-1}c^{-1}ba^2ca^{-1},
       ac^2a^{-1}a^{-1}b^{-1}ab, x^2, a^{-1}xax, b^{-1}xbx, c^{-1}xcx \rangle    \\
       H & = \genset{ab^{-1}c^{-1}bac, ba^{-1}c^{-1}ba^2ca^{-1},
       ac^2a^{-1}a^{-1}b^{-1}ab}                                                 \\
     \end{aligned}                                                \\
   & \begin{aligned}
       \label{eq-ace-F27}
       G & = F_{2, 7}= \langle a, b, c, d, x, y, z \mid abc^{-1}, bcd^{-1},
       cdx^{-1}, dxy^{-1}, xyz^{-1}, yza^{-1}, zab^{-1} \rangle             \\
       H & = \genset{\varepsilon}                                           \\
     \end{aligned}                                                     \\
   & \begin{aligned}
       \label{eq-ace-M12}
       G & = M_{12} = \langle a, b, c \mid a^{11}, b^2, c^2, (ab)^3, (ac)^3,
       (bc)^{10}, (cb)^2abcbc(a^{-1})^5\rangle                               \\
       H & = \genset{\varepsilon}                                            \\
     \end{aligned}                                                    \\
   & \begin{aligned}
       \label{eq-ace-SL219}
       G & = SL(2, 19) = \langle a, b \mid ab^{-1}a^{-1}b^{-1}a^{-1}b^{-1},
       b^{-1}a^{-1}a^{-1}baa, ab^4ab^{10}ab^4ab^{29}a^{12} \rangle          \\
       H & =\genset{b}                                                      \\
     \end{aligned}                                                     \\
   & \begin{aligned}
       \label{eq-ace-big-hard}
         & \hspace{-1em}\begin{aligned}
                       G & = \langle a, b, c, x, y \ \mid &  & x^2, y^3, ab^{-1}c^{-1}bac,
                       ba^{-1}c^{-1}baaca^{-1}, acca^{-1}a^{-1}b^{-1}ab, a^{-1}xax, b^{-1}xbx,
                       c^{-1}xcx,                                                                                                \\
                         &                                &  & a^{-1}y^{-1}ay, b^{-1}y^{-1}by, c^{-1}y^{-1}cy, xy^{-1}xy \rangle
                     \end{aligned}
       \\
       H & = \genset{ab^{-1}c^{-1}bac, ba^{-1}c^{-1}ba^2ca^{-1}, ac^2a^{-2}b^{-1}ab}
         &                                                                                                              &  &      \\
     \end{aligned}
\end{align}

\input{anc/orientation-preserving-monoid.table}
\input{anc/orientation-reversing-monoid.table}

\input{anc/partition-monoid.table}
\input{anc/singular-brauer-monoid.table}
\begin{landscape}
\input{anc/dual-symmetric-inverse-monoid.table}
\input{anc/uniform-block-bijection-monoid.table}
\end{landscape}
\input{anc/temperley-lieb-monoid.table}

\input{anc/stellar-monoid.table}
\input{anc/stylic-monoid.table}
\input{anc/walker.table}

\input{anc/groups.table}

\end{document}